\newtheorem{theorem}{Theorem}
\newtheorem{conjecture}[theorem]{Conjecture}
\newtheorem{lemma}[theorem]{Lemma}
\newtheorem{corollary}[theorem]{Corollary}
\newcommand{\R}{\mathbb{R}}
\newcommand\extrafootertext[1]{%
    \bgroup
    \renewcommand\thefootnote{\fnsymbol{footnote}}%
    \renewcommand\thempfootnote{\fnsymbol{mpfootnote}}%
    \footnotetext[0]{#1}%
    \egroup
}
\date{\today}
\begin{document}

\title{When does  \texorpdfstring{$e^{-\lvert \tau\rvert }$}{exp(-|t|)} maximize Fourier extension for a conic section?\extrafootertext{Keywords: Sharp restriction theory, Fourier extension, maximizer, sphere, hyperboloid, cone.}\extrafootertext{$^1$giuseppe.negro@tecnico.ulisboa.pt; $^2$diogo.oliveira.e.silva@tecnico.ulisboa.pt; $^3$thiele@math.uni-bonn.de}}

 \author[1]{Giuseppe Negro}
   \author[2]{Diogo Oliveira e Silva}
   \author[3]{Christoph Thiele} 

\affil[1,2 ]{Departamento de Matem\'atica, Instituto Superior T\'ecnico, Universidade de Lisboa, Portugal.}
 \affil[3]{Mathematisches Institut, Rheinische Friedrich Wilhelms Universit\"at, Bonn, Germany. }

\maketitle
  
  \abstract{In the past decade, much effort has gone into understanding maximizers for Fourier restriction and extension inequalities. Nearly all of the cases in which maximizers for inequalities involving the restriction or extension operator have been successfully  identified can be seen as partial answers to the question in the title. In this survey, we  focus on recent developments in sharp restriction theory relevant to this question.
  We present results in the algebraic case for spherical and hyperbolic extension inequalities. We also discuss the use of the Penrose transform leading to some negative answers in the case of the cone. }

\section{Introduction}

We are interested in a family of maximization problems in Fourier extension theory for quadratic surfaces.
In \cite{FOS17}, this family of problems was formulated in a uniform manner, that we now recall.

Consider the standard cone $\mathbb{K}^{d+1}$
in $\mathbb{R}^{d+2}$, that is, the set of all $(\tau,\eta)$ with $\tau\in \mathbb{R}$, $\eta\in \mathbb{R}^{d+1}$, and $\tau^2=|\eta|^2$.
A conic section $S$ is the intersection of this
cone with a hyperplane $W$. After a rotation in the 
$\eta$-variables, which does not affect our object of interest, we may assume that the hyperplane $W$ has the equation
\begin{equation}\label{e:plane}
\alpha\tau+\beta\rho+\gamma=0
\end{equation}
for some real numbers $\alpha,\beta,\gamma$, where $\eta=(\rho,\xi)$ with $\rho\in $
$\mathbb{R}$ and $\xi\in  \mathbb{R}^{d}$.

A natural measure $\sigma$ on the conic section $S$ is given by
\begin{equation}\label{eq:sigma_measure_intro}
    \int_S f \textup d\sigma: = \int_{\mathbb{R}^{d+2}}
    f(\tau,\rho,\xi)\delta(\alpha \tau+\beta \rho+
    \gamma)\delta(\tau^2-\rho^2-|\xi|^2) \, \textup d\tau \textup d\rho \textup d\xi,
\end{equation}
where, say, $f$ is a continuous  function on the section $S$.
We denote by $L^2(\sigma)$ the Hilbert space closure of the space of functions satisfying 
\[\|f\|_{L^2(\sigma)}:= \left(\int_S |f|^2 \textup d\sigma\right)^{1/2} <\infty.\]
The Fourier extension of the function $f$ is the function
$$\widehat{f \sigma}(x):=  \int_{\mathbb{R}^{d+2}}
f(\tau,\rho,\xi)e^{- i \eta \cdot x} \delta(\alpha \tau+\beta \rho+
\gamma)\delta(\tau^2-\rho^2-|\xi|^2) \, \textup d\tau\textup d\rho\textup d\xi,$$
which we consider for $x\in W$.
More precisely, we are interested in the  quantity 
\begin{equation}\label{e:extfunctional}
\sup_f \|\widehat{f \sigma} \|_{L^p(W)},
\end{equation}
where the supremum is taken over all  $f$ in the closed unit ball $B$ of  $L^2(\sigma)$, and
\begin{equation}\label{e:pnormextension}
\|\widehat{f \sigma} \|_{L^p(W)}:= \left(\,\,
\int_{\mathbb{R}^{d+2}}
|\widehat{f\sigma}(x)|^p \delta(\alpha t+\beta r +
\gamma) \, \textup d t \textup d r \textup d x\right)^{1/p}
\end{equation}
In particular, we ask to identify maximizers $f$ in the ball $B$ for the quantity  
\eqref{e:pnormextension}, if they exist.
Often one formulates this problem in an equivalent way, as extremizing the quotient
\begin{equation}\label{e:quotient}
\sup_{\|f\|_{L^2(\sigma)}\neq 0}\|\widehat{f \sigma} \|_{L^p(W)}^p \|f \|_{L^2(\sigma)}^{-p}.
\end{equation}

It was observed in \cite{FOS17} that
 in many instances the function $f(\tau, \rho, \xi)=e^{-|\tau|}$ maximizes  \eqref{e:quotient}.
Indeed, this statement includes nearly all instances of known maximizers in sharp restriction theory, such as constant functions on 
spheres,  or maximizers which are gaussian with respect to projection measure in the
case of paraboloids, or exponential functions on the cone.
This survey summarizes some progress on the question ``For which parameters of the problem is 
 $e^{-|\tau|}$ a maximizer of  \eqref{e:quotient}?'',
with particular emphasis on progress since \cite{FOS17}.

Progress in identifying the maximizer
attaining the supremum \eqref{e:extfunctional}
was almost entirely made in the {\it algebraic case}, i.e., when
$p=2k$ is an even integer. 
We may then  write the $k$-th power of  \eqref{e:pnormextension} as
$$\|(\widehat{f\sigma})^k\|_2=(2\pi)^{\frac{d+1}2}\|({f\sigma})^{*k}\|_2,$$
where we used Plancherel's identity and wrote $(f\sigma)^{*k}$ for the successive convolution of $k$ copies of the measure $f\sigma$. The integral power inside the norm allows for some helpful exact manipulations of the expression.

Obviously, finiteness of the supremum  \eqref{e:extfunctional} is a necessary condition for it being attained by a maximizer. The range of exponents $p$ for which \eqref{e:extfunctional} is finite is 
known for all conic sections in all dimensions $d$; see Figure \ref{fig:PDP} for the algebraic case.

\begin{figure}[h]
    \centering
    \includegraphics[width=.8\textwidth]{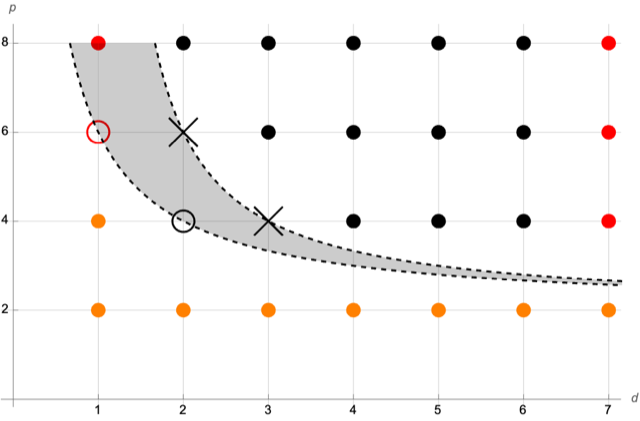}
    \caption{\footnotesize On the horizontal axis, dimension $d$. On the vertical axis, Lebesgue exponent $p$ of the $L^2(\sigma)\to L^p$ extension inequality. Circles ($\circ$) at $d\in\{1,2\}$ correspond to endpoint Stein--Tomas for spheres and Stein--Tomas for paraboloids, while crosses ($\times$) at $d\in\{2, 3\}$ correspond to  Stein--Tomas for cones. The shaded region between the two dashed curves correspond to hyperboloids. For spheres, black entries correspond to inequalities which are maximized by constants, while red entries correspond to inequalities which are conjecturally maximized by constants; orange entries correspond to situations in which Stein--Tomas does not hold, but other replacement inequalities such as Agmon--Hörmander may be available.}
    \label{fig:PDP}
\end{figure}

Letting $(\alpha,\beta,\gamma)=(1,0,-1)$,  equation~\eqref{e:plane} reduces to $\tau=1$, so the section $S$ corresponds to the unit sphere $\mathbb S^d$. In this case, the function $e^{-|\tau|}$ is constant. 
The measure on $\mathbb S^d$ defined by~\eqref{eq:sigma_measure_intro} is then nothing but the usual surface measure.
The supremum in~\eqref{e:extfunctional} is finite for $p\geq 2+\frac4d$, by the Stein--Tomas inequality \cite{St93, To75} on $\mathbb S^{d}$. We call $p=2+\frac4d$ the endpoint Stein--Tomas exponent.
The first result identifying constants as maximizers for the sphere was proved by Foschi \cite{Fo15} in the algebraic case $p=4$ when $d=2$. This corresponds to the sharp endpoint Stein--Tomas restriction theorem on $\mathbb S^2\subset\mathbb R^3$.
Remarkably, the case of the endpoint Stein--Tomas exponent for $\mathbb S^1\subset\R^2$,
the only other case where this endpoint is algebraic for the sphere, remains open. Much effort has gone into studying this case. In particular, a modification of Foschi's approach has been suggested
but only led to partial progress. In Section \ref{s:circle}, we discuss some rigorous numerical computations to verify that constants maximize \eqref{e:extfunctional} for $\mathbb S^1$, among 
those functions in the unit ball of $L^2(\sigma)$ which have Fourier modes up to degree $120$. These simulations also suggest an asymptotic behaviour for
large Fourier modes, which in the future may help complement the numerical 
work with an analytic proof.

In Section \ref{s:spheres}, we address the case of higher dimensional spheres $\mathbb S^d$, $d\geq 2$. We describe in some detail a common approach to all the black entries on Figure \ref{fig:PDP}, corresponding to $2\leq d\leq 6$, present a particular instance of a sharp extension inequality on $\mathbb S^7$, and conclude with an open problem.

On the unit sphere $\mathbb S^{d}$, the Stein--Tomas condition excludes $p=2$
in all dimensions. Following Agmon and  H\"ormander \cite{AH76}, we therefore also consider the modified version of~\eqref{e:extfunctional}
\begin{equation}\label{e:extfunctional_mod}
\sup_f R^{-1}\|\widehat{f \sigma} 1_{B_R}\ \|_{L^2(W)},
\end{equation}
where $B_R$ denotes the ball in $W\equiv\mathbb R^{d+1}$ of radius $R$ about the origin. Even though this
functional is related to an endpoint result for the classical  proof of the Stein--Tomas inequality via complex interpolation, for some values of the parameters $d, R$ the supremum in~\eqref{e:extfunctional_mod} is maximized when $f$ is constant, while for other values of the parameters it is maximized by other, less symmetric functions. This is an instance of \emph{symmetry breaking}. Section \ref{s:agmonh} surveys
this in more detail.

In Section \ref{s:cones}, we specialize the hyperplane~\eqref{e:plane} to the limiting case $\rho=0$, i.e. $(\alpha,\beta,\gamma)=(0,1,0)$. The resulting conic section $\tau^2=\lvert \xi\rvert^2$, where $\xi\in\mathbb R^d$, is identified with the cone $\mathbb{K}^{d}$. The measure $\sigma$ of~\eqref{eq:sigma_measure_intro} reduces to the Lorentz-invariant measure on $\mathbb{K}^{d}$, which is unique up to scalar multiplication. The quantity~\eqref{e:extfunctional} is finite if and only if $p=2\frac{d+1}{d-1}$, by the Strichartz estimate for the wave equation \cite{St77}. 
Foschi~\cite{Fo07} showed that $f(\tau, \xi)=e^{-\lvert\tau\rvert}$ maximizes this estimate when $(d,p)=(3,4)$, and he formulated the natural conjecture that the same should be true in arbitrary dimensions. This turned out to be false for even $d$; we will discuss the proof, which is based on the Penrose conformal compactification of Minkowski spacetime. 

Some related extension problems are also meaningful in this cone context. The aforementioned~\cite{Fo07}  establishes that, on the half-cone $\mathbb{K}^{d}_+=\mathbb{K}^{d}\cap \{ \tau>0 \}$, equipped with the natural restriction $\sigma_+$ of the Lorentz-invariant measure $\sigma$, the function $f(\tau, \xi)=e^{-\lvert\tau\rvert}$ is a maximizer for $d\in\{2,3\}$. We will briefly discuss the conjecture that this should hold in arbitrary dimension, which is still open. Finally, we will consider the case of weighted versions of the measure $\sigma_+$, corresponding to Strichartz estimates with a different Sobolev regularity of the initial data.

Hyperboloids, corresponding to $(\alpha,\beta,\gamma)=(0,1,1)$ in \eqref{e:plane}, capture features from both spheres  (Sections \ref{s:circle}--\ref{s:agmonh}) and cones  (Section \ref{s:cones}), but new phenomena arise -- most notably, maximizers sometimes fail to exist.
In spite of this, optimal constants have been determined for all the algebraic endpoint inequalities, and concentration-compactness tools have been used together with bilinear restriction theory in order to shed some light on the behaviour of arbitrary maximizing sequences in the case of non-endpoint inequalities. In Section \ref{s:hyperbolic}, we survey both of these types of results.

While in Fourier extension theory one also studies finiteness of \eqref{e:extfunctional}
when $f$ ranges over the unit ball of $L^q(\sigma)$ for $q\neq 2$, there is little progress in identifying $e^{-|\tau|}$ as the maximizer for $q\neq 2$ and we therefore focus in this survey on $q=2$.

\subsection{Remarks and further references}
Following \cite{FOS17} and \cite[Introduction]{OS17}, the style of this survey is admittedly informal. In particular, some objects will not be rigorously defined, and several results will not be precisely formulated. Most of the material is not new, exceptions being some parts of Section \ref{s:circle}, the main result in \S \ref{s:1cone}, and a few observations which we could not find in the literature. The subject is becoming more popular, as shown by the increasing number of works that appeared in the last decade. We have attempted to provide a rather complete set of references, which includes several interesting works on spheres \cite{BV20, BBI15, COS15, COSS19b, CS12a, CG22, FS22, FLS16, MOS21, Sh16}, cones \cite{BJ15, B10, FK00, NOSST22, OR13, Q13, Ra12}, hyperboloids \cite{COS22, DMPS18, J14, OR13, OR14, Qu22}, curves \cite{BS20, FS18, HS12, OS14, OS18, Sh09a}, paraboloids \cite{BV07, BBCH09, BBFGI18, BBJP17, Ca09, CQ14, DMR11, Go17, Go19, GZ20, HZ06, K03, OT98, Sh09, St20, WZ21} and perturbations \cite{DY22, JPS10, JSS14, OSQ18, OSQ20, Ta21a,Ta21b} that will not be discussed here. Given its young age, there are plenty of open problems in the area. We provide some more.

\section{The circle} 
\label{s:circle}
The endpoint Stein--Tomas exponent for the sphere $\mathbb S^d$ is an even integer in dimensions $d\in\{1,2\}$.
If $d=2$, then the endpoint exponent is $p=4$, and 
this is the only case in which constant functions are known to maximize the  Fourier extension map for the endpoint Stein--Tomas exponent, thanks to the argument of Foschi \cite{Fo15}. Remarkably, 
the question remains open in dimension $d=1$, where the exponent is $p=6$. A  modification of Foschi's argument for $d=1$ was proposed in \cite{CFOST17}, and this approach was substantiated by extensive numerical evidence in \cite{OSTZK18} and \cite{BTZK20}. In this section, we discuss this approach to the circle problem.

We first recall Foschi's argument \cite{Fo15} for $d=2$ in a form reflecting more recent insights. Let the measure $\sigma$ be defined by $\sigma(x)=\delta(|x|^2-1)$. 
We aim to show
\begin{equation}\label{e:foschi2}
\|\widehat{f\sigma}\|_4^4\le \|\widehat{\sigma}\|_4^4  
\end{equation}
for all $f$ that have the same $L^2(\sigma)$-norm as the constant function $1$. 

Basic steps that 
are recalled in Section \ref{s:spheres} allow to restrict attention to $f$ that are real and antipodally symmetric, so that also $u:=\widehat{f\sigma}$ is real and even.
As $\widehat{u}$ is supported on the unit sphere, $u$ satisfies the  Helmholtz equation, 
$$u+\Delta u=0.$$ 
This equation is used in the first key step that is sometimes referred to as the {\it magic identity}. It expresses the left-hand side of  \eqref{e:foschi2} in a  way that later removes a singularity of $\sigma*\sigma$ at the origin. 
The derivation of the magic identity uses partial integration, which is
justified because $u$ and all its derivatives are in $L^4$; see \cite[Prop.\@ 6]{CNOS21}. Replacing one factor of $u$
with Helmholtz, we obtain
 for the left-hand side of  \eqref{e:foschi2}
\begin{equation}\label{eq:magic_identity}
-\int_{\mathbb{R}^3} (\Delta u)u^3 =\int_{\mathbb{R}^3} \nabla u \cdot \nabla (u^3)
=3\int_{\mathbb{R}^3} |\nabla u|^2u^2=\frac 34 \int_{\mathbb{R}^3} |\nabla u^2|^2=-\frac 34 \int (\Delta u^2)u^2.
\end{equation}
Expressing this again in terms of $f$, and proceeding analogously for the constant function, we reduce 
\eqref{e:foschi2} 
to
$$\Lambda(f,f,f,f)\le \Lambda({\bf 1},{\bf 1},{\bf 1},{\bf 1})$$
with $\Lambda$ defined by
$$\Lambda(f_1,f_2,f_3,f_4)=\int_{(\mathbb{R}^3)^4} |x_1+x_2|^2 \delta\left(\sum_{k=1}^4  x_k\right) \prod_{j=1}^4 f_j\sigma (x_j)\textup d x_j.
$$
The second key step is a positivity argument reminiscent of the Cauchy--Schwarz inequality.
We use the inequality 
\begin{equation}\label{e:ab}
2ab\le a^2+b^2
\end{equation}
for real numbers  $a$ and $b$ chosen to be $f(x_1)f(x_2)$
and $f(x_3)f(x_4)$ at every point $x_1,x_2,x_3,x_4$ and obtain by positivity of the integral kernel
\begin{equation}\label{eq:CScreative}
2\Lambda(f,f,f,f)\le \Lambda(f^2,f^2,{\bf 1},{\bf 1})+ \Lambda({\bf 1},{\bf 1},f^2,f^2).
\end{equation}
Using  symmetry of $\Lambda$ thanks to $|x_1+x_2|=|x_3+x_4|$, we are reduced to showing
$$\Lambda(f^2,f^2,{\bf 1},{\bf 1})\le \Lambda({\bf 1},{\bf 1},{\bf 1},{\bf 1}).$$
We write $f^2={\bf 1}+g$, where $g\sigma$ has integral zero.
In the integral expression for  $\Lambda({\bf 1},{\bf 1},{\bf 1},g)$, the measure  $g\sigma$ is integrated against a radially symmetric function and thus this integral is zero.
Using expansion and symmetry of $\Lambda$ again, we are  reduced to showing the third key step, which is the inequality
\begin{equation}\label{e:3rdkey}
\Lambda(g,g,{\bf 1},{\bf 1})\le 0.
\end{equation}
This relies on the important calculation, see e.g.  \cite{OSQ21a} or Lemma \ref{lem:2-fold} below, that $(\sigma*\sigma)(x)$ is a positive scalar multiple of $$ |x|^{-1}1_{|x|\le 2}. $$
This convolution appears in the expression for $\Lambda(g,g,{\bf 1},{\bf 1})$ by integrating out the last two variables. Thanks to the support of the remaining functions, we may omit the indicator $1_{|x|\le 2}$
and reduce $\eqref{e:3rdkey}$ to
\begin{equation}\label{e:ggoneone}
 \int_{(\mathbb{R}^3)^2}   \frac{|x_1+x_2|^2}{|x_1+x_2|}
g\sigma(x_1) g\sigma(x_2)\textup dx_1 \textup d x_2 \le 0.
\end{equation}
It is here, as a result of the magic identity \eqref{eq:magic_identity}, that the singularity 
at $|x_1+x_2|=0$ is cancelled.

Define the analytic family $h_s$ of 
tempered distributions 
on $\mathbb{R}^3$
 by
\begin{equation}\label{e:homdis}
h_s(\phi)=\Gamma\left(\frac{s+3}2\right)^{-1}\int_{\mathbb{R}^3}|x|^
{s} \phi(x) {\textup d x}.
\end{equation}
The integral \eqref{e:homdis} is well-defined  for 
$\Re(s)>-3$ and positive when
$\phi$ is a gaussian.
The inequality \eqref{e:ggoneone} is then equivalent to 
\begin{equation}\label{e:h1}
h_1(g\sigma* g \sigma)\le 0.
\end{equation}

The distribution $h_s$ is rotationally symmetric and homogeneous 
of degree $s$, and it is up to positive scalar uniquely determined by 
these symmetries
and positivity on the gaussian.
As the Fourier transform  $\widehat{h}_s$ also has rotational symmetry and dilation symmetry
with  degree of homogeneity $-3-s$, we have for $-3<\Re(s)<0$
\begin{equation}\label{e:fcte}
a\widehat{h}_s= h_{-3-s}
\end{equation}
for some positive constant $a$.
Analytic continuation with \eqref{e:homdis} and \eqref{e:fcte} allows to define $h_s$ for all complex numbers $s$. By unique continuation,
  $h_s(\phi)$ is expressed by  \eqref{e:homdis} whenever $\phi$ vanishes of sufficiently high order at $0$ so that the integral is absolutely convergent.

By Plancherel,  we reduce \eqref{e:h1} to
\begin{equation}\label{e:hminus4}
h_{-4}( (\widehat{g\sigma})^2)\le 0 .\end{equation}
As $(\widehat{g\sigma})^2$  vanishes to second order at the origin, the pairing with $h_{-4}$ is given by the expression \eqref{e:homdis}.
Inequality \eqref{e:hminus4} follows, because $(\widehat{g\sigma})^2$ is nonnegative and  $\Gamma(-1/2)<0$.
This concludes our discussion of the case $d=2$.

We turn to the case of the circle, $d=1$. We again assume $f$ smooth, real-valued, and antipodally symmetric. The endpoint Stein--Tomas exponent is $p=6$ and one may deduce a magic identity 
$$\|u\|_6^6=-\frac 59 \int_{\mathbb{R}^3} (\Delta u^3) u^3.$$
However, we aim to cancel a singularity of $\sigma*\sigma*\sigma$ at $|x|=1$,
so we consider the identity in the form
$$\frac 45 \|u\|_6^6=- \int_{\mathbb{R}^3} (u^3+\Delta  u^3)u^3.$$
The second step in Foschi's program  suggests the inequality
\begin{equation}\label{e:conjcs}\Lambda(f,f,f,f,f,f)\le \Lambda(f^2,f^2,f^2,{\bf 1},{\bf 1},{\bf 1})
\end{equation}
with
$$\Lambda(f_1,f_2,f_3,f_4,f_5,f_6):=\int_{(\mathbb{R}^2)^6} (|x_1+x_2+x_3|^2-1) \delta\left(\sum_{k=1}^6  x_k\right) \prod_{j=1}^6 f_j\sigma (x_j)\textup d x_j.
$$
Inequality \eqref{e:conjcs} does not follow as above from a pointwise application of 
the elementary pointwise inequality \eqref{e:ab} because the integral kernel is not positive.
Indeed, inequality \eqref{e:conjcs} is not known to be true in the required level of generality.
Assuming \eqref{e:conjcs} nevertheless for the moment, it remains to show the third key step,
\begin{equation}\label{e:tril}\notag \Lambda(f^2,f^2,f^2,{\bf 1},{\bf 1},{\bf 1})\le
\Lambda({\bf 1},{\bf 1},{\bf 1},{\bf 1},{\bf 1},{\bf 1}).
\end{equation}
This inequality is the main result of \cite{CFOST17}. The 
technique used in
\cite{CFOST17} is to expand the function $f^2$ on the circle into Fourier series and use careful computations from \cite{OST17} for the tensor coefficients of $\Lambda$ in the Fourier basis. These arguments are very technical and go beyond the scope of this survey.

The question whether constant functions maximize the Stein--Tomas inequality on the circle is thus reduced to proving
\eqref{e:conjcs}. 
Additional motivation for this conjecture is that thanks to the antipodal symmetry of $f$ the values of the integrand on the negative and positive parts are correlated. We formulate an even more general conjecture.
\begin{conjecture}
\label{c:csconjecture}
The quadratic form  
$$Q(f)=\Lambda(f^2,{\bf 1})-\Lambda(f,f),$$ 
with
$\Lambda(f_1,f_2)$ defined as
 \[\int_{(\mathbb{R}^2)^6} (1-|x_1+x_2+x_3|^2) \delta\left(\sum_{k=1}^6  x_k\right)f_1(x_1,x_2,x_3)f_2(x_4,x_5,x_6) \prod_{j=1}^6 \sigma (x_j)\textup dx_j
\]
is positive semi-definite
on the space of real-valued  functions $f\in L^2((\mathbb{S}^1)^3)$ that are antipodally symmetric in each variable.

\end{conjecture}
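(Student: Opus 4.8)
The plan is to reduce Conjecture \ref{c:csconjecture} to the semidefiniteness of a tightly structured family of Hermitian matrices indexed by an angular-momentum parameter, and then to attack those matrices using the Bessel-function technology developed in \cite{OST17, CFOST17}.

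\emph{Step 1: a Cauchy--Schwarz reformulation.} For a function $f$ of three circle variables that is antipodally symmetric in each, let $\mathcal Ef(y):=\int f(x_1,x_2,x_3)\,\delta(x_1+x_2+x_3-y)\,\mathrm d\sigma(x_1)\mathrm d\sigma(x_2)\mathrm d\sigma(x_3)$ be the pushforward of $f\,\sigma^{\otimes3}$ under summation, so that $\mathcal E\mathbf 1=\sigma*\sigma*\sigma$. Integrating out three of the six variables against $\delta(\sum_k x_k)$ yields $\Lambda(f_1,f_2)=\int_{\mathbb R^2}(1-|y|^2)\,\mathcal Ef_1(y)\,\mathcal Ef_2(y)\,\mathrm dy$ (using the antipodal symmetry), and expanding the Cauchy--Schwarz defect and symmetrizing gives the graph-Dirichlet representation $Q(f)=\tfrac12\int\!\!\int\bigl(f(\mathbf x)-f(\mathbf x')\bigr)^2\,(1-|x_1+x_2+x_3|^2)\,\delta\bigl(\sum_{j\le3}x_j-\sum_{j\le3}x'_j\bigr)\,\mathrm d\sigma^{\otimes3}(\mathbf x)\,\mathrm d\sigma^{\otimes3}(\mathbf x')$, where the delta forces $x_1+x_2+x_3=x'_1+x'_2+x'_3$ and hence makes the weight a symmetric function of $\mathbf x$ and $\mathbf x'$. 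The point of this form is that the edge weights $1-|x_1+x_2+x_3|^2$ are positive only on the small region $|x_1+x_2+x_3|\le1$ and negative beyond it; a pointwise Cauchy--Schwarz handles the positive part for free, the weight vanishes at $|x_1+x_2+x_3|=1$ (the cancellation supplied by the magic identity), and all of the difficulty is to show that the controlled positive contribution dominates the negative one coming from the rarefied fibres $\{|x_1+x_2+x_3|=\mathrm{const}>1\}$.

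\emph{Step 2: Fourier diagonalization.} Expand $f=\sum_{\mathbf n\in(2\mathbb Z)^3}\widehat f(\mathbf n)\,e_{\mathbf n}$ in the orthonormal system $e_{\mathbf n}(\theta)=e^{i(n_1\theta_1+n_2\theta_2+n_3\theta_3)}$; antipodal symmetry forces even modes and reality forces $\widehat f(-\mathbf n)=\overline{\widehat f(\mathbf n)}$. Since $f^2=\sum_{\mathbf k,\mathbf l}\widehat f(\mathbf k)\widehat f(\mathbf l)\,e_{\mathbf k+\mathbf l}$, we get $Q(f)=\sum_{\mathbf k,\mathbf l}\widehat f(\mathbf k)\widehat f(\mathbf l)\,[\Lambda(e_{\mathbf k+\mathbf l},e_{\mathbf 0})-\Lambda(e_{\mathbf k},e_{\mathbf l})]$. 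The kernel of $\Lambda$ is invariant under the simultaneous rotation of all six circles, whence $\Lambda(e_{\mathbf a},e_{\mathbf b})=0$ unless $\sum_ja_j+\sum_jb_j=0$; therefore $Q$ decomposes as an orthogonal sum $Q=\sum_{N\in2\mathbb Z}Q_N$ over the total angular momentum $N=\sum_jk_j$, with $Q_N$ the Hermitian form whose matrix is $A^{(N)}_{\mathbf k,\mathbf m}=\Lambda(e_{\mathbf k-\mathbf m},e_{\mathbf 0})-\Lambda(e_{\mathbf k},e_{-\mathbf m})$ on sequences supported on $\{\mathbf k\in(2\mathbb Z)^3:\sum_jk_j=N\}$. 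As the kernel of $\Lambda$ is real and $S_3$-symmetric, each $A^{(N)}$ is Hermitian and commutes with the diagonal $S_3$-permutation representation, so Conjecture \ref{c:csconjecture} is equivalent to: \emph{for every $N\in2\mathbb Z$ and every $S_3$-isotypic block (trivial, sign, standard), the corresponding piece of $A^{(N)}$ is positive semidefinite.} Writing $\delta(\sum_kx_k)=(2\pi)^{-2}\int e^{iz\cdot\sum_kx_k}\mathrm dz$, $|x_1+x_2+x_3|^2e^{iz\cdot(x_1+x_2+x_3)}=-\Delta_ze^{iz\cdot(\cdots)}$, and $\int_{\mathbb S^1}e^{iz\cdot x+in\theta}\mathrm d\sigma=\pi i^nJ_n(|z|)e^{in\arg z}$, one finds that whenever $\sum_jk_j=N=-\sum_jl_j$ the coefficient $\Lambda(e_{\mathbf k},e_{\mathbf l})$ is a fixed constant times $\int_0^\infty\mathcal L_N[J_{k_1}J_{k_2}J_{k_3}](r)\,J_{l_1}(r)J_{l_2}(r)J_{l_3}(r)\,r\,\mathrm dr$, with $\mathcal L_N=1+\partial_r^2+r^{-1}\partial_r-N^2r^{-2}$ the radial symbol of $1+\Delta$ -- precisely the triple-Bessel coefficients evaluated in \cite{OST17}, the operator $\mathcal L_N$ (i.e., the weight $1-|y|^2$) being exactly what renders them convergent at $r=\infty$.

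\emph{Step 3: the main obstacle, and an alternative.} It remains to establish semidefiniteness of the $A^{(N)}$, uniformly in $N$. Even the rank-one specialization $f=g\otimes g\otimes g$ of the $N=0$ block in the trivial isotype is open -- it is inequality \eqref{e:conjcs}. The natural tools are those of \cite{CFOST17}, which settle the third key step by expanding $f^2$ into a Fourier series and estimating the ensuing double series of Bessel coefficients from \cite{OST17} term by term; one would try to upgrade this into a Schur-type test for the full matrices $A^{(N)}$, organized around the three-term recurrence for Bessel functions (which induces recursions among the entries in each index), a sharp asymptotic understanding of the diagonal $A^{(N)}_{\mathbf k,\mathbf k}$, and monotonicity of that diagonal in $N$. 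The genuine difficulty -- the reason this is only a conjecture -- is that the weight $1-|y|^2$ is not of one sign, so no matrix entry is manifestly positive: one must extract the cancellation quantitatively and control it over infinitely many sectors $N$ and all three isotypes at once, not merely on a diagonal as in \cite{CFOST17}. A structurally different route would be to transport the problem to the forward light cone $\mathbb K^2_+$ via the Penrose compactification of Section \ref{s:cones}: since $e^{-|\tau|}$ maximizes the $L^6$ Strichartz estimate on $\mathbb K^2_+$ by \cite{Fo07}, a sufficiently explicit conformal identification of the circle extension problem with a slice of the cone problem would yield \eqref{e:conjcs} and, optimistically, Conjecture \ref{c:csconjecture} itself; constructing such an identification seems to be the real crux and is at present not understood.
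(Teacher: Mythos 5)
Your Steps 1--2 are correct as far as they go: the identity $Q(f)=\tfrac12\iint (f(\mathbf x)-f(\mathbf x'))^2\,w$ with the sign-changing weight, and the block-diagonalization of $Q$ by the total degree $N=n_1+n_2+n_3$ with matrix entries expressible through the triple-Bessel integrals of \cite{OST17}, are valid reformulations -- but they reproduce the framework that is already in place: the survey itself observes that joint-rotation invariance reduces Conjecture \ref{c:csconjecture} to positive semi-definiteness on each eigenspace separately, and these Bessel-coefficient representations are exactly what underlies \cite{CFOST17} and the numerics of \cite{OSTZK18,BTZK20}. The statement under review, however, \emph{is} the assertion that these infinite Hermitian blocks are positive semi-definite, and your Step 3 concedes that neither the proposed Schur-test/recurrence program nor the speculative Penrose-transform identification is carried out; no new positivity mechanism is produced. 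So what you have is a reformulation plus a research plan, not a proof: the gap is the entire content of the conjecture. This matches its status in the paper, where it remains open and is only verified on the finite-dimensional subspace of Fourier modes up to degree $120$ (Theorem \ref{t:120}), supplemented by heuristic asymptotics for the small eigenvalues.

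Two smaller points. First, the tensor power $f=g\otimes g\otimes g$ is not supported in the sector $N=0$: its Fourier support ranges over all $(n_1,n_2,n_3)$ with each $n_j$ in the support of $\widehat g$, so inequality \eqref{e:conjcs} is not a specialization of the $N=0$ block in the trivial isotype; it couples all sectors. Second, since the weight $1-|x_1+x_2+x_3|^2$ changes sign, the Dirichlet-form representation in your Step 1 yields no positivity by itself -- you acknowledge this, but it is worth stressing that any genuine proof must quantify the cancellation between the two sign regions, which is precisely what neither your sketch nor the existing literature currently achieves.
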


Positive definiteness of $Q$ has been verified on a large finite dimensional subspace of $f\in L^2((\mathbb{S}^1)^3)$ in \cite{BTZK20},
using a large computing cluster
and
extending previous numerical results in \cite{OSTZK18}. 
More specifically, identifying $\R^2$ with the complex plane $\mathbb C$, we  expand $f$ in $L^2((\mathbb{S}^1)^3)$ into Fourier series
\begin{equation}\label{e:fseries}\notag
f(x_1,x_2,x_3)=\sum_{n\in \mathbb{Z}^3} \widehat{f}(n_1,n_2,n_3) x_1^{n_1} x_2^{n_2} x_3^{n_3}.
\end{equation}
 Note that antipodal symmetry translates into vanishing of Fourier coefficients with at least one odd index.
The following theorem 
is proven in
 \cite{BTZK20}.
\begin{theorem}[\cite{BTZK20}]\label{t:120}
Let $f\in L^2((\mathbb{S}^1)^3)$ 
so that the Fourier coefficient $\widehat{f}(n_1,n_2,n_3)$ is zero if
one of the numbers $n_1$, $n_2$, $n_3$ is odd or larger than $120$. Then
$Q(f)\ge 0$.
\end{theorem}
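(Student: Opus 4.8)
The plan is to reduce the positive-semidefiniteness of $Q$ restricted to the finite-dimensional space $V_{120}$ of trigonometric polynomials with even frequencies bounded by $120$ to a finite eigenvalue computation. First I would record that $Q(f) = \langle f, M f\rangle$ for a symmetric bilinear form whose matrix entries, in the monomial basis $x_1^{n_1}x_2^{n_2}x_3^{n_3}$ of $L^2((\mathbb S^1)^3)$, are explicit integrals over $(\mathbb S^1)^6$ with the delta constraint $\sum_k x_k = 0$. The key structural observation is that $\Lambda(f_1,f_2)$ only couples $f_1$ and $f_2$ through the convolution measure: writing $\mu = \sigma * \sigma * \sigma$ on $\mathbb R^2$, one has $\Lambda(f_1,f_2) = \int (1-|y|^2)\,\widehat{f_1\sigma^{\otimes 3}}$-type expressions, so the matrix of $Q$ in the Fourier basis is block-diagonal by total frequency $n_1+n_2+n_3$ (translation invariance of the kernel in a suitable sense), and within each block the entries are computed from the moments of $\mu$ against characters. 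This is exactly the computation carried out in \cite{OST17}; I would invoke those closed-form tensor coefficients rather than rederive them.

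Next I would set up the finite truncation carefully. On $V_{120}$, antipodal symmetry forces all odd indices to vanish, so the relevant index set is $\{-120,-118,\dots,118,120\}^3$, and after using the block decomposition by $n_1+n_2+n_3$ together with the symmetry $n \mapsto -n$ (which makes the matrix real symmetric once one passes to the cosine/sine real form) one is left with a collection of moderate-size real symmetric matrices $M_N$. The claim $Q(f)\ge 0$ for all $f\in V_{120}$ is then equivalent to $M_N \succeq 0$ for every admissible $N$. The main obstacle is the sheer scale: the entries of $M_N$ are themselves sums of products of Bessel-type or Gegenbauer-type moments of $\mu$ that must be evaluated in exact rational (or interval) arithmetic, and the resulting matrices are large enough that a naive floating-point eigenvalue computation does not constitute a proof.

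To turn the numerics into a rigorous statement I would use a verified positive-semidefinite certificate: compute, in floating point, an approximate Cholesky-type factorization $M_N \approx L_N L_N^{\mathsf T}$, then bound $\|M_N - L_N L_N^{\mathsf T}\|$ in a submultiplicative norm using interval arithmetic, and conclude $M_N \succeq 0$ provided the smallest computed eigenvalue exceeds this error bound. Where the smallest eigenvalue is genuinely close to zero (one expects this, since constants should be a boundary maximizer, making the kernel of the true infinite-dimensional $Q$ nontrivial), I would first quotient out the known null space — the space spanned by the constant function in each variable, reflecting $\Lambda(\mathbf 1,\mathbf 1)$ balancing $\Lambda(\mathbf 1 \cdot \mathbf 1, \mathbf 1)$ — and verify strict positivity on the complement. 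The role of the large computing cluster in \cite{BTZK20} is precisely to carry out these verified linear-algebra steps for all frequency blocks up to the cutoff $120$; the mathematical content is the reduction above, and I would present the theorem as the outcome of that certified computation, deferring the arithmetic details to \cite{BTZK20} and \cite{OST17} as the paper already does.
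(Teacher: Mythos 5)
Your proposal follows essentially the same route as the paper, which defers the proof to \cite{BTZK20}: expand in the Fourier basis, use the joint-rotation symmetry to block-diagonalize $Q$ by total frequency with the explicit tensor coefficients computed in \cite{OST17}, and certify positive semidefiniteness of each truncated block by rigorous (interval/exact) linear algebra on a large cluster, accounting for the exact null direction coming from the constant function. This is the strategy of \cite{BTZK20} as described in the survey, so the approaches coincide.
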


Foschi's program then shows that constants maximize the endpoint Stein--Tomas inequality on the circle
among all functions with Fourier modes up to degree $120$.

The numerical computations also suggest an asymptotic behaviour of $Q$ for large frequencies.
This could be helpful for a resolution of Conjecture \ref{c:csconjecture}. We  summarize some of these observations in the remainder of this section, which is best read with the displays from \cite{BTZK20} at hand.

The quadratic form $Q$ is symmetric under joint rotation of the variables of $f$, that is, 
$Q(f)=Q(g)$ whenever
\[g(x_1,x_2,x_3)=f(\omega x_1,\omega x_2,\omega x_3)\]
for a complex number $\omega$ of modulus one.  This rotation action decomposes $L^2((\mathbb S^1)^3)$ into mutually orthogonal eigenspaces, and it 
suffices to prove positive semi-definiteness of $Q$ on
each eigenspace separately.
Numerical simulations confirm that understanding of the matter hinges on understanding of the main eigenspace $V$ spanned by  Fourier modes which satisfy
\begin{equation}\label{e:mainblock}n_1+n_2+n_3=0.
\end{equation}
The index set of the Fourier modes with even $-120\le n_1,n_2,n_3\le 120$ satisfying \eqref{e:mainblock}
is schematically depicted via black dots in the hexagon on the left in Figure \ref{fig:MRE}.
The colored structures in the plot show the locus of relatively large coefficients of
a typical row of the symmetric quadratic form $Q$. There is a cluster of very large values 
near the diagonal element, multiplied  by the six fold permutation symmetry of the indices.
Further relatively large coefficients appear near a circular structure. All remaining coefficients
are quite small. The occurrence of this circular structure is observed but as of yet poorly understood.

\begin{figure}
    \centering
        \includegraphics[width=.49\textwidth]{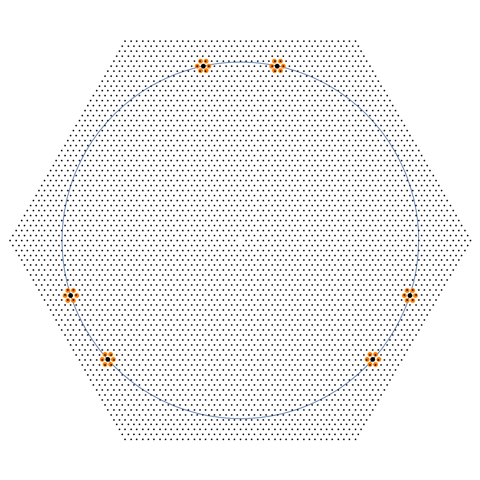}
    \includegraphics[width=.49\textwidth]{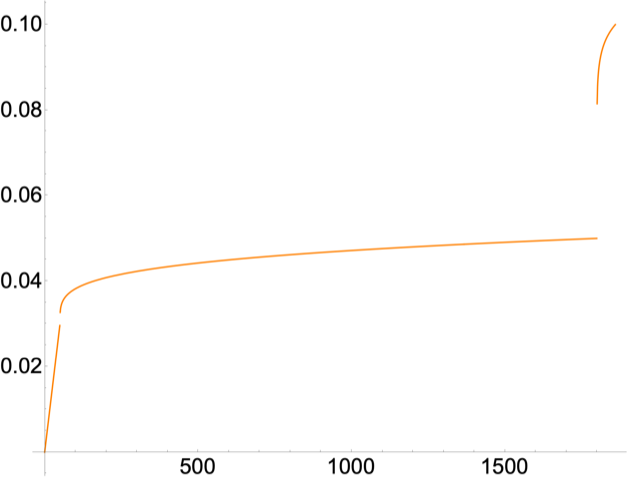}
    \caption{\footnotesize Fourier modes and eigenvalues; see \cite{OSTZK18} and especially \cite[Figure 6]{BTZK20}.}
    \label{fig:MRE}
\end{figure}

On the right in Figure \ref{fig:MRE}, the eigenvalues of $Q$ on $V$
 are schematically depicted in increasing order. They
fall into 
 three regions: small, intermediate, and  large eigenvalues.
The small eigenvalues  are 
the main difficulty in proving Conjecture \ref{c:csconjecture}. Numerical simulations suggest that these eigenvalues are caused by 
highly localized functions, 
$$f_\epsilon(x_1,x_2,x_3)=
\prod_{j=1}^3 \exp(-\epsilon^{-2}\Re(x_j)^2),$$
for small $\epsilon$, which approximate a sum of antipodal Dirac deltas.
Denote the orthogonal projection of $f_\epsilon$ to the eigenspace $V$ by $g_\epsilon$. Jiaxi Cheng, a graduate student at Bonn, theoretically observed  
the asymptotic behaviour
\begin{equation}\label{e:asympt}
\left|Q(g_\epsilon)\|g_\epsilon\|_{L^2(\sigma)}^{-2}-c \log(\epsilon)\epsilon^2\right|=o(\log(\epsilon)\epsilon^2)
\end{equation}
using Taylor expansion near the essential
support of $g_\epsilon$.
This models most if not all of the small eigenvalues that were observed in the numerical
simulations in \cite{BTZK20}.
An asymptotic behaviour of the smallest eigenvalue was explicitly suggested 
in \cite{BTZK20} and interpreted as  proportional to $ \epsilon^{1.74}$,
which in the range of observations
$1/20<\epsilon<1/120$ is nearly proportional
to $\log(\epsilon)\epsilon^2$. The theoretically discovered constant 
$c$ in \eqref{e:asympt} by Cheng is consistent with the numerical observations.

The functions $g_\epsilon$  inherit an approximate radial symmetry from the radial symmetry of a Dirac delta and the approximating gaussian. This results in an approximate
radial symmetry of the Fourier coefficients of the eigenfunctions with small eigenvalues, parameterized by the points in the hexagon in Figure \ref{fig:MRE}. This was
observed  in \cite{BTZK20}.
Further computations by Jiaxi Cheng show that these eigenfunctions 
are properly modeled by eigenfunctions of a Schr\"odinger operator in the plane with infinite hexagonal well or, approximately, circular well. The latter eigenfunctions are explicitly known, they are radial functions with radial profile given by a scaled Bessel function $J_0$ so that the spherical well falls on a zero of the scaled Bessel function. This is matching the numerical findings in 
\cite[Fig.\@ 8]{BTZK20}, where the eigenfunctions with the five smallest eigenvalues are displayed.

The above discussion gives hope that one can find a good analytic understanding of the small eigenvalues, say on a  suitably defined space of radial functions.
On the orthogonal complement of such space, one mainly faces the 
intermediate eigenvalues displayed in 
Figure \ref{fig:MRE}, which appear to come from a small perturbation of a multiple of the  identity matrix. This is confirmed by the typical matrix row of $Q$, schematically depicted in 
Figure \ref{fig:MRE}, where the blue circle and the orange dots indicate the loci of  large matrix entries, and are more precisely displayed in 
\cite[Figs.\@ 3--4]{BTZK20}. The circular structure disappears when projected
to the orthogonal complement of the radial functions, while the orange clusters, up to the symmetry group of three elements, represent a nearly diagonal perturbation of the identity matrix.

The small group of large eigenvalues in 
Figure \ref{fig:MRE} is not well understood, but might be an artifact of the precise truncation to the hexagon in the numerical calculations and might be suppressed in a more spherically symmetric or smoothened approach.

\section{Spheres} 
\label{s:spheres}
In this section, we consider the case of higher dimensional spheres $\mathbb S^{d}\subset\R^{d+1}$, $d\geq 2$, equipped with the usual surface measure $\sigma$ as in \eqref{eq:sigma_measure_intro} or, equivalently, as in the paragraph before \eqref{e:foschi2}.
The five black $L^2\to L^4$ entries on Figure \ref{fig:PDP}, corresponding to cases for which constants are global maximizers, have been previously surveyed in \cite{FOS17}. Above them  lie infinitely many $L^2\to L^{2k}$ estimates which have been recently put in sharp form. As in \eqref{e:quotient}, we define the functional
\[\Phi_{d,p}[f]:=\|\widehat {f\sigma}\|^p_p\|f\|_{L^2(\mathbb S^d)}^{-p},\]
and present the main result in \cite{OSQ21a,OSQ21b}.

\begin{theorem}[\cite{OSQ21a,OSQ21b}]\label{thm:spheres}
Let $d\in\{2,3,4,5,6\}$ and $p\geq 6$ be an even integer. Then constant functions are the unique real-valued maximizers of the functional $\Phi_{d,p}$.
The same conclusion holds for $d=1$ and even $p>6$ if constants maximize $\Phi_{1,6}$.
\end{theorem}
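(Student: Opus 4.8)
The plan is to run Foschi's program, as recalled above for the sphere, in the algebraic regime $p=2k$ with $k\geq 3$, uniformly in $d\in\{2,3,4,5,6\}$, and then to leverage the structure of the argument to push through even $p>6$ in dimension $d=1$ \emph{conditionally} on the base case $\Phi_{1,6}$. First I would reduce, by the standard symmetrization steps alluded to in the excerpt (passing to real, antipodally symmetric $f$ via the triangle inequality and an averaging over antipodal reflections), to the inequality $\|\widehat{f\sigma}\|_{2k}^{2k}\le\|\widehat\sigma\|_{2k}^{2k}$ for $f$ with $\|f\|_{L^2(\sigma)}=\|{\bf 1}\|_{L^2(\sigma)}$, and rewrite the left-hand side via Plancherel as $(2\pi)^{-(d+1)}\|(f\sigma)^{*k}\|_2^2$, i.e. as a $2k$-linear form $\Lambda(f,\dots,f)$ whose kernel is $\delta(\sum_{j=1}^{2k}x_j)$ times a polynomial weight coming from iterating the ``magic identity'' (replacing one factor $u=\widehat{f\sigma}$ by $-\Delta u$ using the Helmholtz equation, and moving derivatives around). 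The point of the magic identity in higher $k$ is the same as for $k=2$: it trades the raw kernel $\delta(\sum x_j)$ for one that, after integrating out $k$ of the variables, meets the convolution power $\sigma^{*k}$ precisely where the latter is singular, and cancels that singularity.

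Next I would carry out the Cauchy--Schwarz / positivity step: split the $2k$ variables into two groups of $k$, apply $2ab\le a^2+b^2$ pointwise with $a=\prod_{j\le k}f(x_j)$, $b=\prod_{j>k}f(x_j)$, and use positivity of the (post-magic-identity) kernel to get $2\Lambda(f,\dots,f)\le\Lambda(f^2,\dots,f^2,{\bf 1},\dots,{\bf 1})+(\text{symmetric term})$; here one needs that the weight, restricted to the support constraints, is nonnegative, which is exactly what the $d\in\{2,\dots,6\}$ restriction buys (for larger $d$ the analogue of the kernel changes sign, which is the known obstruction). Then write $f^2={\bf 1}+g$ with $\int g\,\textup d\sigma=0$, expand $\Lambda(f^2,\dots,f^2,{\bf 1},\dots,{\bf 1})$, kill all terms linear in $g$ by radial symmetry (each such term pairs $g\sigma$ against a rotation-invariant function), and reduce to the ``third key step'' $\Lambda(g,\dots,g,{\bf 1},\dots,{\bf 1})\le0$. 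This in turn becomes, after integrating out the ${\bf 1}$-variables to produce $\sigma^{*(k-1)}$ (or $\sigma^{*k}$ depending on the grouping) and using the explicit positive-multiple-of-$|x|^{-\text{(something)}}1_{|x|\le 2}$ formula for the relevant convolution power — cf. Lemma~\ref{lem:2-fold} and the calculations in \cite{OSQ21a} — an inequality of the shape $h_s\bigl((g\sigma)^{*m}\bigr)\le 0$ for the homogeneous distribution $h_s$ of \eqref{e:homdis}, with the magic-identity weight having shifted $s$ into a range where, by Plancherel \eqref{e:fcte}, it is equivalent to $h_{s'}\bigl((\widehat{g\sigma})^{m'}\bigr)\le 0$; since $\widehat{g\sigma}$ is real and the relevant power is even (so the integrand is nonnegative) and the sign of the constant $\Gamma(\tfrac{s'+3}{2})^{-1}$ is negative in the right range, the inequality follows. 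Uniqueness of constants as maximizers comes from tracing the equality cases: equality in $2ab\le a^2+b^2$ forces $f$ (hence $g$) to be constant on the support of the convolution kernel, and the strict sign in the final $h_{s'}$ step forces $\widehat{g\sigma}\equiv 0$, i.e. $g\equiv 0$.

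For the conditional statement in $d=1$, the observation is that the \emph{only} place the base exponent enters is the very first magic-identity/Cauchy--Schwarz packaging: for $p=6$ one must conjecturally establish \eqref{e:conjcs} (equivalently Conjecture~\ref{c:csconjecture}) to get $\Lambda(f,\dots,f)\le\Lambda(f^2,f^2,f^2,{\bf 1},{\bf 1},{\bf 1})$, because the degree-one weight $|x_1+x_2+x_3|^2-1$ is not pointwise nonnegative on the support. Once $\Phi_{1,6}$ is known — equivalently, once the sharp $L^2\to L^6$ extension inequality on $\mathbb S^1$ holds with constant maximizers — the higher even exponents $p=2k>6$ on $\mathbb S^1$ should follow by an induction/bootstrapping on $k$: one runs the same three-step scheme but now the ``third key step'' reduces to an inequality involving $\sigma^{*(k-1)}$ or $(g\sigma)^{*(k-1)}$ against a homogeneous $h_s$, and the needed positivity of the post-magic-identity kernel in dimension $d=1$, together with the sign of the relevant $\Gamma$-factor, can be arranged for every $k\ge 4$ (the weight is a product of factors each controlled by the $|x|^{-1}1_{|x|\le 2}$ shape of $\sigma*\sigma$ on $\mathbb S^1$), so that no new conjectural input beyond $k=3$ is required. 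I expect the genuine obstacle to be precisely the verification that the iterated magic-identity weight is nonnegative on the relevant slice of $\{\sum x_j=0\}\cap(\mathbb S^d)^{2k}$ for all even $p\ge 6$ and all $d\le 6$ simultaneously — this is where the combinatorics of higher convolution powers and the delicate dimensional threshold $d\le 6$ live — whereas the final distributional inequality $h_{s'}((\widehat{g\sigma})^{m'})\le 0$ and the equality analysis are comparatively soft. The conditional clause for $d=1$ is essentially free once $\Phi_{1,6}$ is granted, since the $d=1$ kernels are the most favorable (the convolution $\sigma*\sigma$ on the circle is the mildest singularity in the family).
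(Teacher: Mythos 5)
There is a genuine gap, and it sits exactly where you locate the heart of your own argument: the claim that for $2\le d\le 6$ and every even $p=2k\ge 6$ the post-magic-identity kernel is pointwise nonnegative on the support, so that the Cauchy--Schwarz step $2\Lambda(f,\dots,f)\le \Lambda(f^2,\dots,f^2,{\bf 1},\dots,{\bf 1})+\cdots$ can be run as in Foschi's $(d,p)=(2,4)$ argument. You offer no justification for this positivity, and the available evidence points the other way: already in the simplest higher-exponent case $(d,p)=(1,6)$ the weight $|x_1+x_2+x_3|^2-1$ changes sign on the support, and the corresponding step is precisely the open inequality \eqref{e:conjcs}, i.e.\ Conjecture \ref{c:csconjecture}. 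Nothing about the restriction $d\le 6$ removes this sign problem for $k$-fold splittings; in the theorem, the range $d\in\{2,\dots,6\}$ has an entirely different origin, namely it is the range in which the sharp base case $p=4$ (constants maximize $\Phi_{d,4}$, \cite{Fo15} and the black $L^2\to L^4$ entries of Figure \ref{fig:PDP}) is known. Your ``third key step'' is also not soft for $k\ge 3$: $\sigma^{*k}$ is no longer a pure power of $|x|$ (Lemma \ref{lem:2-fold}), so the reduction to a single homogeneous distribution $h_s$ together with the sign of one $\Gamma$-factor does not go through; in the one case where such a step has been completed, $d=1$ and $p=6$, it is the main theorem of \cite{CFOST17} and rests on delicate estimates for integrals of six Bessel functions \cite{OST17}. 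Finally, your treatment of $d=1$, $p>6$ is internally inconsistent: the mechanism you describe (rerun the direct scheme at each $k\ge 4$, with positivity ``arranged'') never actually uses the hypothesis that constants maximize $\Phi_{1,6}$, whereas the theorem is conditional precisely because the real proof lowers the exponent to that base case.

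For contrast, the paper's proof (\S\ref{sec:specialcase}, \cite{OSQ21a,OSQ21b}) is not a Foschi-type direct argument at all. It is variational and spectral: existence of maximizers \cite{FVV11} and the Euler--Lagrange equation \eqref{eq:EL}; symmetrization to nonnegative, even, smooth critical points; the positive, trace-class convolution operator $T_f$ with trace \eqref{eq:trace}; the Lie-theoretic Lemma \ref{lem:key}, which attaches to any non-constant critical point at least five additional eigenfunctions of $T_f$ with eigenvalue $\lambda/5$ (more generally $\lambda/(p-1)$), forcing the strict trace inequality \eqref{eq:One}; and then the chain \eqref{eq:Two}, which bounds $\Phi_{d,p}[f]$ by a constant multiple of $\Phi_{d,p-2}[f]$ and invokes the sharp lower-exponent inequality. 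This exponent-lowering bootstrap is what produces both the range $d\in\{2,\dots,6\}$ (where $p=4$ is known) and the conditional clause for $d=1$ (where $\Phi_{1,6}$ is the base case). To salvage your plan you would have to either prove the pointwise kernel positivity you assume --- which at $(d,p)=(1,6)$ is exactly the open conjecture, so it cannot be expected to become routine at higher $k$ --- or abandon the pointwise step and set up the spectral/trace argument instead.
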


The purpose of this section is four-fold. 
Firstly, we briefly discuss the proof of Theorem \ref{thm:spheres} in the particular but representative case when $(d,p)=(2,6)$.
Secondly, we describe the extra ingredients which are needed in order to obtain sharp $L^2\to L^{2k}$ estimates for higher $k\geq 4$.
Thirdly, we characterize the class of complex-valued maximizers of $\Phi_{d,p}$, for $d,p$ in the range covered by Theorem \ref{thm:spheres}. 
Finally, we suggest a way to go beyond this range, by presenting a sharp extension inequality on $\mathbb S^7$ in the weighted setting.

\subsection{The case $(d,p)=(2,6)$}\label{sec:specialcase}
We abbreviate notation by writing  $\Phi_p:=\Phi_{2,p}$ and 
\[{\bf T}_p:=\sup \{\Phi_p[f]^{1/p}: 0\neq f\in L^2(\mathbb S^2)\}.\]
The proof naturally splits into five steps, which use tools from the calculus of variations, symmetrization, operator theory, Lie theory, and probability. 
We present them next, and then see how they all come together.
\subsubsection{Calculus of variations}
The existence of maximizers for $\Phi_{6}$ is ensured by \cite{FVV11}.  
Let $f$ be one such maximizer, normalized  so that $\|f\|_{L^2}=1$. 
   Consider the extension operator $\mathcal{E} (f):=\widehat{f\sigma}$ with adjoint given by $\mathcal{E}^*(g):={g}^\vee|_{\mathbb S^{2}}$.
Then the operator norm can be estimated as follows:
\begin{align*}
\|\mathcal{E}\|_{L^2\to L^6}^6
&{=\|\mathcal{E} (f)\|_{L^6(\R^3)}^6=\langle |\mathcal{E} (f)|^{4}\mathcal{E} (f), \mathcal{E} (f) \rangle}
{=\langle\mathcal{E}^*(|\mathcal{E} (f)|^{4}\mathcal{E} (f)),f\rangle_{L^2(\mathbb S^{2})}}\\
&{\leq \|\mathcal{E}^*(|\mathcal{E} (f)|^{4}\mathcal{E} (f))\|_{L^2(\mathbb S^{2})}}
{\leq \| \mathcal{E}^*\|_{L^{6/5}\to L^2} \||\mathcal{E} (f)|^{4}\mathcal{E} (f) \|_{L^{6/5}(\R^3)}}\\
&{= \| \mathcal{E}^*\|_{L^{6/5}\to L^2} \|\mathcal{E} (f)\|_{L^6(\R^3)}^{5}}
{=\|\mathcal{E}\|_{L^2\to L^6}^6}.
\end{align*}
Thus all inequalities are equalities, and in particular equality in the Cauchy--Schwarz step above yields the {\it Euler--Lagrange equation},
\[(|\widehat{f\sigma}|^4\widehat{f\sigma})^\vee|_{\mathbb S^{2}}=\lambda f,\]
which, in convolution form, reads as follows:
\begin{equation}\label{eq:EL}
(f\sigma\ast f_\star\sigma\ast f\sigma\ast f_\star\sigma\ast f\sigma)|_{\mathbb S^{2}} = (2\pi)^{-3} \lambda f;
\end{equation}
{here $f_\star:=\overline{f(-\cdot)}$.} A bootstrapping procedure can then be used to show that
$f$, and indeed any $L^2$ solution of \eqref{eq:EL}, is $C^\infty$-smooth. We omit the details and refer the interested reader to \cite{OSQ21b}, which extends the main result of \cite{CS12b} to the higher dimensional setting of even exponents.

\subsubsection{Symmetrization}
Since $p=6$ is an even integer, the problem is inherently positive, in the sense that nonnegative maximizers exist. In fact,
\[\|f\sigma\ast f\sigma\ast f\sigma\|_{L^2(\R^3)}\leq\||f|\sigma\ast|f|\sigma\ast|f|\sigma\|_{L^2(\R^3)}.\]
Defining
 $f_\sharp:=\sqrt{\frac{|f|^2+|f_\star|^2}2}$,
 we also have the following monotonicity under antipodal symmetrization\footnote{An analogous inequality to \eqref{eq:antipodal_symmetrization_sphere} holds in the non-algebraic case; see \cite[Prop.\@ 6.7]{BOSQ20}, and also~\cite[Prop.~3.1]{GN20} for the case of the cone and the underlying wave equation.} which can be readily verified via a creative application of the Cauchy--Schwarz inequality in the spirit of \eqref{eq:CScreative}:
\begin{equation}\label{eq:antipodal_symmetrization_sphere}
    \|f\sigma\ast f\sigma\ast f\sigma\|_{L^2(\R^3)}\leq\|f_\sharp\sigma\ast f_\sharp\sigma\ast f_\sharp\sigma\|_{L^2(\R^3)}.
\end{equation}
We conclude that
\[{\bf T}_6=\max \{\Phi_6[f]^{1/6}: 0\neq f\in C^\infty(\mathbb S^2),  f \text{ is nonnegative and even} \},\]
an important simplification which will be crucial in the sequel.

\subsubsection{Operator Theory}
We now explore some of the compactness inherent to the problem.
Associated to a given $f\in L^2(\mathbb S^2)$, consider the integral operator $T_f: L^2(\mathbb{S}^2) \to L^2(\mathbb{S}^2)$ defined by
 \begin{equation}\label{eq:TfDef}
 T_f(g)(\omega):=(g\ast K_f)(\omega)=\int_{\mathbb S^2} g(\nu) K_f(\omega-\nu)\textup d\sigma(\nu),
 \end{equation}
which acts on functions $g\in L^2(\mathbb S^2)$ by  convolution with the kernel 
\[K_f(\xi):=(|\widehat{f\sigma}|^4)^\vee(\xi)=(2\pi)^3(f\sigma\ast f_\star\sigma\ast f\sigma\ast f_\star\sigma)(\xi).\]
Note that the Euler--Lagrange equation \eqref{eq:EL} can be written as the (nonlinear) eigenfunction equation $T_f(f)=\lambda f$.
The kernel $K_f$ defines a bounded, continuous function on $\R^3$ which satisfies 
 $K_f(\xi)=\overline{K_f(-\xi)}$, for all $\xi$,
and crucially $K_f(0)=\|\widehat{f\sigma}\|_4^4$.
Correspondingly, the operator $T_f$ is self-adjoint and positive definite. 
In fact, one can check that $T_f$ is trace-class and that its trace is given by
\begin{equation}\label{eq:trace}
\textup{tr}(T_f)=4\pi\|\widehat{f\sigma}\|_4^4.
\end{equation}
This is a consequence of Mercer's theorem, the infinite-dimensional analogue of the well-known statement that any positive semi-definite matrix is the Gram matrix of some set of vectors.

\subsubsection{Lie Theory}
We proceed to discuss the symmetries of the problem.
The set of  $3\times 3$ orthogonal matrices with unit determinant forms the special orthogonal group SO$(3)$, with Lie algebra $\frak so(3)$.
As a preliminary observation, we note that
the exponential map $\exp:\frak{so}(3)\to\textup{SO}(3), A\mapsto e^A$, is surjective onto $\textup{SO}(3)$, and that the functional $\Phi_6$ is rotation- and modulation-invariant. In other words,
\[\Phi_{6}[f\circ e^{tA}]=\Phi_{6}[f]=\Phi_6[e^{i\xi\cdot} f],\]
for all $(t,A)\in\R\times\frak{so}(3)$ and $\xi\in\R^3$.
As we shall now see, these symmetries naturally give rise to new eigenfunctions for the operator $T_f$ defined in \eqref{eq:TfDef}.
Consider the vector field $\partial_A$ acting on sufficiently smooth functions $f:\mathbb S^2\to\mathbb C$ via 
\[\partial_A f:=\frac{\partial}{\partial t}\Big\vert_{t=0} (f\circ e^{tA}).\] 
We have the following key lemma, where we write $\omega=(\omega_1,\omega_2,\omega_3)\in \mathbb S^2$, and by $\omega_j f$ we mean the function defined via $(\omega_j f)(\omega)=\omega_j f(\omega)$.
   \begin{lemma}\label{lem:key}
Let $f:\mathbb S^2\to\R$ be non-constant, such that $f_\star=f\in C^1(\mathbb S^{2})$ and $\|f\|_{L^2}=1$.
 Assume $T_f(f)=\lambda  f$.
Then
\begin{align*}
T_f(\partial_A f)&={{\tfrac{\lambda}{5}}} \partial_Af,\,\,\,\, \text{ for every }A\in\frak{so}(3),\\
T_f(\omega_j f)&={{\tfrac{\lambda}{5}}} \omega_j f,\,\, \text{ for every }j\in\{1,2,3\}.
\end{align*}
Moreover, there exist $A,B\in\frak{so}(3)$, such that {{the set $\{\partial_A f,\partial_B f,\omega_1 f,\omega_2 f,\omega_3 f\}$ is linearly independent}} over $\mathbb C$.
\end{lemma}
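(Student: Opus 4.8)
The plan is to prove the two eigenvalue identities via a bilinear symmetrization/differentiation argument exploiting the invariances of $\Phi_6$, and then handle the linear independence claim by a direct dimension count.

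First, for the identity $T_f(\partial_A f) = \tfrac{\lambda}{5}\partial_A f$, I would differentiate the Euler--Lagrange equation \eqref{eq:EL} along the one-parameter subgroup $t\mapsto e^{tA}$. Since $\Phi_6$ is rotation-invariant, if $f$ is a maximizer then so is $f\circ e^{tA}$, and it satisfies \eqref{eq:EL} with the \emph{same} $\lambda$ (the Lagrange multiplier $\lambda = {\bf T}_6^6/(2\pi)^{3}$ depends only on the operator norm, not on the particular maximizer). Thus the map $t\mapsto f\circ e^{tA}$ is a curve of solutions of the quintic convolution equation $(g\sigma)^{*5}|_{\mathbb S^2} = (2\pi)^{-3}\lambda\, g$ — here using that $f$ is real and even, so $f_\star = f$ and the $f_\star$ factors coincide with $f$. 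Differentiating at $t=0$, the left side produces, by the product rule on the five-fold convolution, five identical terms each equal to $(\partial_A f)\sigma \ast (f\sigma)^{*4}|_{\mathbb S^2}$ — after using the symmetry of convolution and the fact that the kernel $K_f$ already absorbs four of the factors — i.e. $5\,T_f(\partial_A f)/(2\pi)^{3}$; and the right side produces $(2\pi)^{-3}\lambda\,\partial_A f$. Rearranging gives $T_f(\partial_A f) = \tfrac{\lambda}{5}\partial_A f$, provided one is careful that differentiating $\int_{\mathbb S^2} g(\nu)K_f(\omega-\nu)\,d\sigma(\nu)$ in $g$ behaves as expected; this is where the $C^1$ hypothesis on $f$ (upgraded to $C^\infty$ by the bootstrap) and dominated convergence enter. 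The identity for $\omega_j f$ follows the same way, now differentiating instead along the modulation symmetry $f\mapsto e^{i\xi\cdot}f$: writing $\partial_{\xi_j}|_{\xi=0}$ of the EL equation for $e^{i\xi\cdot}f$ produces multiplication by the coordinate $x_j$ on $\R^3$, which restricted to $\mathbb S^2$ is $\omega_j$, and the same combinatorial factor of $5$ appears. One should double-check the normalization: since $K_f(0) = \|\widehat{f\sigma}\|_4^4 \neq 0$, the operator $T_f$ is genuinely positive definite, so $\lambda/5 > 0$ and these are honest eigenvalues, not spurious.

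For the linear independence statement, I would argue as follows. The five candidate functions are $\partial_A f$, $\partial_B f$ (for suitable $A,B$) and $\omega_1 f,\omega_2 f,\omega_3 f$. The three functions $\omega_j f$ are linearly independent over $\mathbb C$ as long as $f$ does not vanish on a set of positive measure and the coordinate functions $\omega_1,\omega_2,\omega_3$ are linearly independent as functions on any such support set — which holds since $f$ is a nonzero real-analytic (indeed $C^\infty$, nonnegative) function and $\{\omega_1,\omega_2,\omega_3\}$ are linearly independent even on any relatively open subset of $\mathbb S^2$. For the $\partial_A f$: parametrizing $\frak{so}(3)$ by the rotation-generator basis $A_1, A_2, A_3$ (rotations about the coordinate axes), one has $\partial_{A_j} f = (\omega \times e_j)\cdot\nabla_{\mathbb S^2} f$, a tangential derivative; these are not all zero unless $f$ is constant, which is excluded by hypothesis. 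One then picks $A, B$ among $A_1, A_2, A_3$ so that $\partial_A f, \partial_B f$ are independent — possible again because $f$ is non-constant, so its spherical gradient is nonzero on an open set, and at a generic point two of the three tangential directions $\omega\times e_j$ span the tangent plane. Finally, independence \emph{across} the two families: the $\omega_j f$ lie in the ``degree-raising'' part (multiplication by a linear function) while the $\partial_A f$ are first-order differential in $f$; a clean way to separate them is parity or a spherical-harmonic degree shift — e.g. expand $f=\sum_{\ell} Y_\ell$ in (even-degree, by antipodal symmetry) spherical harmonics: $\omega_j f$ shifts each $Y_\ell$ to degrees $\ell\pm 1$ (odd degrees), whereas $\partial_A f$, being an infinitesimal rotation, preserves the degree of each $Y_\ell$ (even degrees). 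Hence the span of $\{\partial_A f,\partial_B f\}$ lies in even-degree harmonics and the span of $\{\omega_1 f,\omega_2 f,\omega_3 f\}$ in odd-degree harmonics, so the two subspaces intersect trivially, and the full set of five is independent.

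The main obstacle I anticipate is \emph{not} the eigenvalue identities, which are essentially a bookkeeping exercise once one commits to differentiating \eqref{eq:EL} along the symmetry groups, but rather pinning down the linear independence cleanly in the degenerate cases — one must genuinely use that $f$ is non-constant (and real, even, smooth, nonnegative) to rule out, e.g., $f$ being invariant under a one-parameter rotation subgroup, which would kill one of the $\partial_A f$. The cleanest route is the spherical-harmonic decomposition sketched above: it simultaneously handles the parity separation between the two families and reduces the within-family independence to the statement that a non-constant even function on $\mathbb S^2$ cannot be annihilated by two independent infinitesimal rotations (true since the only functions fixed by a two-dimensional subalgebra of $\frak{so}(3)$ are constants) and that multiplication by $\omega_1,\omega_2,\omega_3$ is injective modulo the (trivial) common kernel. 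A secondary technical point, worth a remark rather than a full argument, is justifying the interchange of differentiation and integration when differentiating the convolution equation; this is routine given the $C^\infty$-smoothness and rapid decay of $\widehat{f\sigma}$ established in the calculus-of-variations step, so I would simply cite that and the dominated convergence theorem.
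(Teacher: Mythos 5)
Your proposal is correct and takes essentially the paper's route: the two identities come from differentiating the Euler--Lagrange equation \eqref{eq:EL} along the rotation and modulation symmetries (yielding the combinatorial factor $5$), and the linear independence comes from the even/odd parity split between $\partial_A f,\partial_B f$ and $\omega_1 f,\omega_2 f,\omega_3 f$ together with the fact that $\frak{so}(3)$ has no two-dimensional subalgebra, which is exactly the codimension-two fact the paper cites. One minor remark: the lemma assumes only that $f$ is a critical point, not a maximizer, so instead of appealing to maximality of $f\circ e^{tA}$ (or of $e^{i\xi\cdot}f$) you should simply note that rotations and modulations map solutions of \eqref{eq:EL} to solutions with the same $\lambda$ — by rotation invariance of $\sigma$ and the frequency-shift identity for the five-fold convolution — after which your differentiation argument, and likewise your later subalgebra argument superseding the ``generic point'' heuristic, go through unchanged.
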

The proof of Lemma \ref{lem:key} hinges on the fact that the codimension of a proper, nontrivial subalgebra of $\frak{so}(3)$ equals 2.
The linear independence of the set $\{\partial_A f,\partial_B f,\omega_1f,\omega_2f,\omega_3f\}$ follows from the facts that 
$\partial_A f,\partial_B f$ are real {\it even} functions, whereas 
$\omega_1f,\omega_2f,\omega_3f$ are real {\it odd} functions.

\subsubsection{Uniform random walks in $\mathbb R^3$}\label{sec:PT}
We will need explicit expressions for various convolution measures $\sigma^{\ast k}$.
These can be interpreted in terms of random walks, and as such are sometimes available in the probability theory literature.
More precisely, consider independent and identically distributed random variables $X_1,X_2,X_3$, taking values on $\mathbb S^{2}$ with uniform distribution. 
Then $Y_3=X_1+X_2+X_3$ is known as the {\it uniform 3-step random walk} in $\R^3$. 
If $p_3$ denotes the probability density of $|Y_3|$, then a straightforward computation in polar coordinates reveals that $(\sigma\ast\sigma\ast\sigma)(r)=\sigma(\mathbb S^{2})^2 {p_3(r)}{r^{-2}}$. 
Such considerations quickly lead to the following formulae for spherical convolutions.
\begin{lemma}[\cite{OSQ21a}]\label{lem:2-fold} The following identities hold:
	\begin{equation}\label{eq:doubledequal3}\notag
(\sigma\ast\sigma)(\xi)=\frac{2\pi}{|\xi|},\text{ if }|\xi|\leq2,
	\end{equation}
	\begin{equation}\label{eq:tripledequal3}\notag
	(\sigma\ast\sigma\ast\sigma)(\xi)=\begin{cases}
	8\pi^2,&\text{ if }|\xi|\leq 1,\\
	 4\pi^2\Bigl(\frac{3}{|\xi|}-1\Bigr),&\text{ if }1\leq |\xi|\leq 3.
	\end{cases}
	\end{equation}
\end{lemma}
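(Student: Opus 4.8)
The plan is to derive these convolution formulae by interpreting $\sigma^{\ast k}$ through uniform random walks on $\mathbb{S}^2$, exactly as suggested in the paragraph preceding the lemma. The key reduction is that, since $\sigma$ is rotation-invariant and supported on the unit sphere, $\sigma^{\ast k}$ is rotation-invariant and supported on the ball of radius $k$; hence it is a radial function $(\sigma^{\ast k})(\xi) = F_k(|\xi|)$, and it suffices to compute $F_k(r)$ for $0 \le r \le k$. I would relate $F_k$ to the probability density $p_k$ of $|Y_k|$, where $Y_k = X_1 + \dots + X_k$ and the $X_j$ are i.i.d.\ uniform on $\mathbb{S}^2$: passing to polar coordinates in $\mathbb{R}^3$ gives $(\sigma^{\ast k})(r) = \sigma(\mathbb{S}^2)^{k-1} p_k(r) r^{-(d)}$ with $d = 2$, so $(\sigma^{\ast k})(r) = \sigma(\mathbb{S}^2)^{k-1} p_k(r) r^{-2}$, recalling $\sigma(\mathbb{S}^2) = 4\pi$.

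First I would handle $k=2$ directly, since this is the cleanest case. The density $p_2$ of the distance between two independent uniform points on $\mathbb{S}^2$ is classical: parametrizing the second point by its polar angle $\theta$ relative to the first, the chord length is $r = 2\sin(\theta/2)$, and the uniform measure in $\theta$ has density $\tfrac12 \sin\theta\, d\theta$; changing variables to $r$ yields $p_2(r) = r/2$ on $[0,2]$. Substituting, $(\sigma\ast\sigma)(r) = 4\pi \cdot (r/2) \cdot r^{-2} = 2\pi/r$ for $0 \le r \le 2$, which is the first identity. (Alternatively, one can compute $(\sigma\ast\sigma)(\xi) = \int_{\mathbb{S}^2} \delta(|\xi-\omega|^2 - 1)\,d\sigma(\omega)$ directly by expanding $|\xi-\omega|^2 = |\xi|^2 - 2\xi\cdot\omega + 1$ and integrating the delta against the angular variable, which gives the same answer and makes the support condition $|\xi|\le 2$ transparent.)

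For $k=3$, I would obtain $p_3$ by convolving, i.e.\ conditioning on $|Y_2|$: writing $Y_3 = Y_2 + X_3$ and using that $p_2$ is already known, one computes the density of $|Y_2 + X_3|$ by the standard formula for the distance distribution of a point at fixed distance $s$ from the origin plus an independent uniform unit vector. Concretely, given $|Y_2| = s$, the conditional density of $r = |Y_3|$ is $r/(2s)$ on the interval $[|s-1|, s+1]$ (the same chord-length computation as before, now with radius $s$ instead of $1$), so $p_3(r) = \int_0^2 \tfrac{r}{2s}\,\mathbf{1}_{[|s-1|,s+1]}(r)\, p_2(s)\, ds = \int \tfrac{r}{2s}\cdot \tfrac{s}{2}\,\mathbf{1}_{[|s-1|,s+1]}(r)\, ds = \tfrac{r}{4}\,|\{s\in[0,2] : |s-1| \le r \le s+1\}|$. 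For $0 \le r \le 1$ the admissible $s$-interval is $[r-1,r+1]\cap[0,2]$ intersected with $\{s \ge r-1\}$, which works out to length $2$; for $1 \le r \le 3$ it has length $3-r$. This gives $p_3(r) = r/2$ on $[0,1]$ and $p_3(r) = r(3-r)/4$ on $[1,3]$, whence $(\sigma^{\ast 3})(r) = 16\pi^2 p_3(r) r^{-2}$ evaluates to $8\pi^2$ on $[0,1]$ and $4\pi^2(3/r - 1)$ on $[1,3]$, as claimed.

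The main obstacle is bookkeeping the ranges of integration and the piecewise structure: the only real work is correctly tracking, for $k=3$, the Lebesgue measure of the set of $s$ for which the triangle inequalities $|s-1| \le r \le s+1$ and $0 \le s \le 2$ are simultaneously satisfied, split according to whether $r \le 1$ or $r \ge 1$. No analytic subtlety arises — the densities $p_2, p_3$ are elementary — so once the random-walk reduction and the conditional chord-length density are set up, the identities follow by direct substitution. I would also double-check the normalization constant $\sigma(\mathbb{S}^2)^{k-1} = (4\pi)^{k-1}$ against the total mass $\int (\sigma^{\ast k}) = \sigma(\mathbb{S}^2)^k = (4\pi)^k$, which provides a consistency check on both formulae.
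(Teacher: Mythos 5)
Your route --- writing $(\sigma^{\ast k})(\xi)=\sigma(\mathbb S^2)^{k-1}\,p_k(r)\,r^{-2}$ with $p_k$ the density of the uniform $k$--step random walk, then building $p_2$ by a chord--length change of variables and $p_3$ by conditioning on $|Y_2|=s$ --- is exactly the one the paper gestures at in the paragraph preceding the lemma, so the overall approach is the same. The $k=2$ computation, the conditional density $\tfrac{r}{2s}\,\mathbf 1_{[|s-1|,\,s+1]}(r)$, the $[1,3]$ branch of $p_3$, and your total--mass consistency check all come out correctly.

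There is, however, a concrete arithmetic slip in the $[0,1]$ branch of $p_3$. The admissible set is $\{s\in[0,2]:|s-1|\le r\le s+1\}=\bigl[\max(0,\,1-r,\,r-1),\,\min(2,\,1+r)\bigr]$; for $0\le r\le 1$ this is $[1-r,\,1+r]$, of length $2r$, not $2$. Hence $p_3(r)=\tfrac{r}{4}\cdot 2r=\tfrac{r^2}{2}$ on $[0,1]$, not $r/2$. As written your final substitution is self--inconsistent: $16\pi^2\cdot(r/2)\cdot r^{-2}=8\pi^2/r$ is not the claimed constant $8\pi^2$. With the corrected $p_3(r)=r^2/2$ one gets $16\pi^2\cdot(r^2/2)\cdot r^{-2}=8\pi^2$, matching the lemma, so the fix is local. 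One further small point: the chord--length parametrization gives $|X_1-X_2|=2\sin(\theta/2)$, whereas the random walk calls for $|X_1+X_2|=2\cos(\theta/2)$; these have the same law since $X_2\mapsto -X_2$ preserves the uniform measure on $\mathbb S^2$, so your $p_2$ is correct, but this equivalence should be stated rather than used silently.
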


\begin{corollary}\label{cor:sigma}
$\Phi_6[{\bf 1}]=2\pi\Phi_4[{\bf 1}].$
\end{corollary}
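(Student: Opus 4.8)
The plan is to compute both sides directly using Plancherel's identity and the explicit convolution formulae from Lemma \ref{lem:2-fold}. First I would recall that, since $p=4$ and $p=6$ are even integers, the algebraic manipulation described in the introduction applies: for the constant function $\mathbf 1$ on $\mathbb S^2$ we have $\|\widehat\sigma\|_4^4=(2\pi)^{-3}\|\sigma\ast\sigma\|_{L^2(\R^3)}^2$ and $\|\widehat\sigma\|_6^6=(2\pi)^{-3}\|\sigma\ast\sigma\ast\sigma\|_{L^2(\R^3)}^2$. Since $\|\mathbf 1\|_{L^2(\mathbb S^2)}^2=\sigma(\mathbb S^2)=4\pi$, the functionals reduce to $\Phi_4[\mathbf 1]=\|\widehat\sigma\|_4^4/(4\pi)^2$ and $\Phi_6[\mathbf 1]=\|\widehat\sigma\|_6^6/(4\pi)^3$, so the claimed identity is equivalent to $\|\sigma\ast\sigma\ast\sigma\|_{L^2}^2 = (4\pi)\cdot 2\pi\cdot \|\sigma\ast\sigma\|_{L^2}^2$, i.e. $\|\sigma^{\ast 3}\|_{L^2}^2=8\pi^2\|\sigma^{\ast 2}\|_{L^2}^2$.

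Next I would evaluate the two $L^2$ norms using the radial profiles in Lemma \ref{lem:2-fold}. For the two-fold convolution, $(\sigma\ast\sigma)(\xi)=2\pi|\xi|^{-1}$ for $|\xi|\le 2$, so
\[
\|\sigma\ast\sigma\|_{L^2(\R^3)}^2=\int_{|\xi|\le 2}\frac{4\pi^2}{|\xi|^2}\,\textup d\xi
=4\pi^2\cdot 4\pi\int_0^2\textup dr=32\pi^3.
\]
For the three-fold convolution, I would split the integral over $|\xi|\le 1$ and $1\le|\xi|\le 3$ according to the two cases, obtaining
\[
\|\sigma^{\ast 3}\|_{L^2}^2=\int_{|\xi|\le 1}(8\pi^2)^2\,\textup d\xi+\int_{1\le|\xi|\le3}16\pi^4\Bigl(\frac3{|\xi|}-1\Bigr)^2\textup d\xi,
\]
which after passing to polar coordinates is a one-variable polynomial integral in $r$ that evaluates to a rational multiple of $\pi^5$; comparing with $8\pi^2\cdot 32\pi^3=256\pi^5$ should close the argument.

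The main obstacle is essentially bookkeeping: one must be careful with the normalization constants relating $\|\widehat{f\sigma}\|_p^p$, $\|(f\sigma)^{\ast k}\|_{L^2}^2$, and the power of $(2\pi)$ that appears, since the identity $\|(\widehat{f\sigma})^k\|_2=(2\pi)^{(d+1)/2}\|(f\sigma)^{\ast k}\|_2$ from the introduction enters squared and with $d=2$. There is no analytic difficulty — no distribution theory, no symmetrization — because everything is explicit and compactly supported; the only risk is an arithmetic slip in the radial integrals or a misplaced factor of $2\pi$, so I would double-check the final constant by tracking $\sigma(\mathbb S^2)=4\pi$ through each step.
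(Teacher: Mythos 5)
Your proposal follows essentially the same route as the paper's proof: use Plancherel for the algebraic exponents $p=4,6$ to reduce $\Phi_4[\mathbf 1]$ and $\Phi_6[\mathbf 1]$ to $\|\sigma\ast\sigma\|_2^2$ and $\|\sigma\ast\sigma\ast\sigma\|_2^2$, then evaluate these with the explicit radial profiles from Lemma~\ref{lem:2-fold}. One small slip — the Plancherel factor is $(2\pi)^{3}$, not $(2\pi)^{-3}$, as you in fact note later via $\|(\widehat{f\sigma})^k\|_2=(2\pi)^{(d+1)/2}\|(f\sigma)^{\ast k}\|_2$ with $d=2$ — but since it enters both sides with the same power it cancels, and the three-fold integral you leave unfinished does evaluate to $256\pi^5=8\pi^2\cdot 32\pi^3$, closing the argument.
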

Indeed, from Lemma \ref{lem:2-fold} we have that
\begin{align*}
    \Phi_4[{\bf 1}]&=(2\pi)^3 \|{\bf 1}\|_{L^2(\mathbb S^{2})}^{-4}\|\sigma\ast\sigma\|_2^2=16\pi^4,\text{ and}\\ 
\Phi_6[{\bf 1}]&=(2\pi)^3  \|{\bf 1}\|_{L^2(\mathbb S^{2})}^{-6}\|\sigma\ast\sigma\ast\sigma\|_2^2=32\pi^5.
\end{align*}
\subsubsection{End of proof of Theorem \ref{thm:spheres} when $(d,p)=(2,6)$}
By Step 1, it suffices to check that any non-constant {\it critical point} (i.e.\@ an $L^2$ solution of the Euler--Lagrange equation \eqref{eq:EL}) $f:\mathbb S^{2}\to\mathbb C\in C^1(\mathbb S^{2})$ of $\Phi_6$ satisfies $\Phi_6[f]<\Phi_6[{\bf 1}]$.  
By Step 2, we may further assume that $f_\star=f$ is real-valued, and that $\|f\|_{L^2}=1$.
From $T_f(f)=\lambda f$, one checks that $\lambda=\Phi_6[f]$. 
Thus, by Steps 3--4,
\begin{equation}\label{eq:One}
\Phi_6[f]
=\lambda
=\tfrac12(\lambda+5\times\tfrac{\lambda}5)
<\frac12 \textup{tr}(T_f)
=2\pi \|\widehat{f\sigma}\|_4^4,
\end{equation} 
where the strict inequality is a consequence of Lemma \ref{lem:key} together with the fact that all eigenvalues of $T_f$ are strictly positive, and the last identity has been observed in \eqref{eq:trace}. 
But
\begin{equation}\label{eq:Two}
2\pi\|\widehat{f\sigma} \|_4^4= 2\pi\Phi_{4}[f]\leq  2\pi\Phi_{4}[{\bf 1}]=\Phi_{6}[{\bf 1}]
\end{equation} 
where the inequality follows from the result \eqref{e:foschi2} of Foschi \cite{Fo15} reviewed in Section \ref{s:circle}, and the last identity 
was seen in Step 5 (Corollary \ref{cor:sigma}).
From \eqref{eq:One} and  \eqref{eq:Two} 
it follows that $\Phi_6[f]<\Phi_6[{\bf 1}]$, and this concludes the sketch of the proof of the case $(d,p)=(2,6)$ of Theorem \ref{thm:spheres}.

\subsection{Higher dimensions and exponents}
The special case $(d,p)=(2,6)$ of Theorem \ref{thm:spheres} which we addressed in \S \ref{sec:specialcase}, while illustrative of the general scheme, relies on several crucial simplifications which made the proof sketch fit in just a few pages. 
In order to deal with general even exponents and different dimensions, further ideas and techniques are needed. 
These turn out to be broadly connected with the following areas:
\begin{itemize}
    \item[$\bullet$] {\it Non-commutative algebra.} When trying to generalize Lemma \ref{lem:key} to higher dimensions, one is naturally led to the following question: What is the minimal codimension of a proper subalgebra of $\frak{so}(d)$? The answer is known and reveals an interesting difference that occurs in the four-dimensional case: the minimal codimension of a proper subalgebra of $\frak{so}(d)$ equals $d-1$ if $d\geq 3, d\neq 4$, but equals $2$ if $d=4$. In group theoretical terms, the group $\textup{SO}(4)/\{\pm I\}$ is {\it not} simple, whereas all other groups $\textup{SO}(d)$ are simple (after modding out by $\{\pm I\}$ if $d$ is even). 
    \item[$\bullet$] {\it Combinatorial geometry.} When trying to extend the relevant estimates from Corollary \ref{cor:sigma} to the multilinear setting of $(p/2)$-fold spherical convolutions, one faces certain variants of the {\it cube slicing problem}: Given $0<k<d$, what is the maximal volume of the intersection of the unit cube $[-\frac 12,\frac12]^d$ with a $k$-dimensional subspace of $\R^d$? The cube slicing problem has been intensely studied, but a complete solution remains out of reach. Fortunately, the methods that have been developed for this problem can be adapted to fulfil our needs.
        \item[$\bullet$] {\it Analytic number theory.} The rather direct approach we took in \S\ref{sec:PT} needs to be refined in order to tackle higher dimensions. Uniform random walks in $\R^d$ are lurking in the background and, despite being a classic topic in probability theory, a complete answer in even dimensions remains a fascinating, largely unsolved problem, which via the theory of hypergeometric functions and modular forms exhibits some deep connections to analytic number theory \cite{BSWZ12}. In view of this, we combined known formulae for uniform random walks with rigorous numerical integration and asymptotic analysis for a certain family of weighted integrals in order to complete our task.
\end{itemize}

\subsection{Complex-valued maximizers}\label{sec:complex}
Once real-valued maximizers have been identified, one can proceed to characterize {\it all} complex-valued maximizers.
\begin{theorem}[\cite{OSQ21a}]\label{thm:complex}
Let $d\geq 1$ and $p\geq 2+\frac{4}{d}$ be an even integer.
Then each complex-valued maximizer of the functional $\Phi_{d,p}$ is of the form
\[c e^{i\xi\cdot\omega}F(\omega),\]
for some $\xi\in\R^d$, some $c\in\mathbb C\setminus\{0\}$, and some nonnegative maximizer $F$ of $\Phi_{d,p}$ satisfying $F(\omega)=F(-\omega)$, for every $\omega\in\mathbb S^{d}$.
\end{theorem}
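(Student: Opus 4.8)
The plan is to reduce the characterization of complex-valued maximizers to the already-established facts about nonnegative, antipodally symmetric ones, exploiting the positivity inherent to even exponents together with a careful analysis of the equality cases in the two inequalities that appear in the symmetrization step. Let $p=2k$ and suppose $f$ is a complex-valued maximizer of $\Phi_{d,p}$, normalized so that $\|f\|_{L^2(\mathbb S^d)}=1$. The first step is to record that
\[
\|\widehat{f\sigma}\|_p^p=(2\pi)^{d+1}\|(f\sigma)^{*k}\|_2^2\le (2\pi)^{d+1}\|(|f|\sigma)^{*k}\|_2^2=\||f|\,\widehat{\vphantom{f}\sigma\cdot}\,\|\ ,
\]
that is, the pointwise bound $|\widehat{(g\sigma)^{*k}}|\le (|g|\sigma)^{*k}$ forces $|f|$ to also be a maximizer, and that the further antipodal symmetrization inequality (the analogue of \eqref{eq:antipodal_symmetrization_sphere}) shows $|f|_\sharp=\sqrt{(|f|^2+|f_\star|^2)/2}$ is a maximizer as well. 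By Theorem \ref{thm:spheres} (and its higher-$k$ counterpart, together with the non-algebraic results referenced for the remaining exponents), the only nonnegative even maximizer is the constant, so $|f|_\sharp\equiv 1$, which already gives $|f(\omega)|^2+|f(-\omega)|^2=2$ a.e.

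The heart of the argument is the analysis of the equality case in the passage from $f$ to $|f|$. Writing $f=|f|e^{i\theta}$ with $\theta:\mathbb S^d\to\R$ measurable, equality in $|(f\sigma)^{*k}|\le (|f|\sigma)^{*k}$ at a.e.\ point $x$ of the support of $(|f|\sigma)^{*k}$ means that the complex numbers $\prod_{j=1}^k f(\omega_j)$, for $(\omega_1,\dots,\omega_k)$ ranging over the fiber $\{\sum\omega_j=x\}$ against which the convolution integrates, all share a common argument depending only on $x$; equivalently $\theta(\omega_1)+\dots+\theta(\omega_k)$ is a.e.\ constant on each such fiber. I would then argue that this rigidity propagates: since the fibers over nearby $x$ overlap generously (the $k$-fold sumset of $\mathbb S^d$ has nonempty interior for $k\ge 2$), the function $(\omega_1,\dots,\omega_k)\mapsto \theta(\omega_1)+\dots+\theta(\omega_k)$ must agree a.e.\ with an affine function of $x=\omega_1+\dots+\omega_k$, i.e.\ there is $\xi\in\R^d$ and a constant $c_0$ with $\sum_j\theta(\omega_j)=\xi\cdot\sum_j\omega_j+kc_0$ for a.e.\ tuple. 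A standard ``functional equation on a positive-measure set'' argument (restricting to tuples with all but one entry fixed) then yields $\theta(\omega)=\xi\cdot\omega+c_0$ for a.e.\ $\omega$, so $f(\omega)=e^{ic_0}e^{i\xi\cdot\omega}|f(\omega)|$. Finally, modulation-invariance of $\Phi_{d,p}$ lets us strip off $e^{i\xi\cdot\omega}$: the function $g:=e^{-i\xi\cdot\omega}f=e^{ic_0}|f|$ is again a maximizer and is a unimodular constant times the nonnegative maximizer $|f|$; then $F:=|f|$ is a nonnegative maximizer, hence (applying the first step once more, or simply noting $|f|=|f|_\sharp$ up to the antipodal relation) can be taken even, giving the claimed form $f=ce^{i\xi\cdot\omega}F(\omega)$ with $c=e^{ic_0}$.

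The main obstacle I expect is making the propagation-of-phase step fully rigorous: turning the fiberwise constancy of $\sum_j\theta(\omega_j)$ into the global affine formula $\theta(\omega)=\xi\cdot\omega+c_0$. The subtleties are that the support of $(|f|\sigma)^{*k}$, while it has nonempty interior, is not all of $\R^d$, so one must check that enough fibers are available to ``connect'' the sphere; that $\theta$ is only defined modulo $2\pi$, so the additive relations hold in $\R/2\pi\Z$ and one must rule out genuinely discontinuous liftings (here the antipodal constraint $|f(\omega)|^2+|f(-\omega)|^2=2$ combined with the fact that $|f|$ is a smooth, in fact constant, maximizer — so $|f|\equiv 1$ and the modulus never vanishes — is what keeps $\theta$ controllable); and that one is working with a.e.-defined objects, so Steinhaus-type or Fubini arguments are needed at each stage rather than pointwise manipulation. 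Once the range of exponents is restricted as in the statement, the nonnegative maximizer is unique and equals the constant, which simplifies the bookkeeping considerably; for the endpoint $p=2+4/d$ with $d=1$ one instead invokes the conditional statement in Theorem \ref{thm:spheres}, but the phase-rigidity argument itself is insensitive to which nonnegative maximizer one lands on.
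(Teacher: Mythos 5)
Your overall route---use the even exponent to write the norm through $k$-fold convolutions, note $|(f\sigma)^{*k}|\le(|f|\sigma)^{*k}$ so that $|f|$ is again a maximizer, analyze the equality case to force the phase $\theta$ to be affine, and strip off the modulation---is indeed the strategy of the cited proof in \cite{OSQ21a} (the survey itself only states the result). But as written there are genuine gaps. The most serious is one of scope: you invoke Theorem~\ref{thm:spheres} to conclude that the only nonnegative even maximizer is the constant, hence $|f|_\sharp\equiv1$ and, as you use later, $|f|\equiv1$ never vanishes. Theorem~\ref{thm:complex} is asserted for \emph{every} $d\ge1$ and even $p\ge 2+\tfrac4d$, including $d=1$, $p=6$ (where constancy is open and the theorem is not conditional) and all $d\ge7$ (where nothing is known; cf.\ the open problem on $\mathbb S^7$). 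The whole point of the statement is that it is a structure result valid \emph{without} knowing what the nonnegative maximizers are, so the proof cannot use constancy of $|f|$, and in particular cannot lean on non-vanishing of $f$ in the phase-lifting step; the argument has to be run on the support of $f$ with no a priori information about $|f|$ beyond its being a nonnegative maximizer. (Also, even granting $|f(\omega)|^2+|f(-\omega)|^2=2$, this gives neither $|f|\equiv1$ nor evenness of $|f|$.)

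Second, the two rigidity steps that actually constitute the proof are left unproven. The conclusion requires $F$ even, and "noting $|f|=|f|_\sharp$ up to the antipodal relation" is not an argument: evenness must be extracted from the equality case of the antipodal symmetrization \eqref{eq:antipodal_symmetrization_sphere}, which needs its own analysis (of the same Cauchy--Schwarz equality type as the phase step) to yield $|f|=|f_\star|$ a.e. And the step you yourself single out---passing from the fiberwise constancy (mod $2\pi$) of $\theta(\omega_1)+\cdots+\theta(\omega_k)$ to $\theta(\omega)=\xi\cdot\omega+c_0$ a.e.---is precisely the technical core; in the literature this is a standalone lemma (Christ--Shao \cite{CS12b} for $k=2$ on $\mathbb S^2$, adapted to $2k$-fold convolutions and general $d$ in \cite{OSQ21a}) requiring a careful measure-theoretic argument for an $\mathbb R/2\pi\mathbb Z$-valued $\theta$ defined only a.e.\ on the (possibly proper) support of $f$, and "fix all but one variable" does not by itself produce the vector $\xi$. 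So the proposal points in the right direction but, as it stands, it both overreaches (using constancy unavailable in the stated range) and omits the essential phase-linearity and evenness arguments.
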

\noindent
Our next result is an immediate consequence of Theorems \ref{thm:spheres} and \ref{thm:complex}.

\begin{corollary}[\cite{OSQ21a}]
	Let $d\in\{2,3,4,5,6\}$ and $p\geq 4$ be an even integer.
	Then  all complex-valued maximizers of the functional $\Phi_{d,p}$ are given by
	$$f(\omega)=ce^{i\xi\cdot\omega},$$
	for some $\xi\in\R^d$ and $c\in\mathbb C\setminus\{0\}$.
	The same conclusion holds for $d=1$ and even integers $p\geq 8$, provided that constants maximize $\Phi_{1,6}$.
\end{corollary}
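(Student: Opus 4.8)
The plan is to derive the Corollary from Theorem~\ref{thm:spheres} and Theorem~\ref{thm:complex} by combining their two conclusions. First I would observe that Theorem~\ref{thm:complex} applies whenever $p\geq 2+\tfrac4d$ is an even integer, and reduces the task of describing all complex-valued maximizers of $\Phi_{d,p}$ to the description of the nonnegative, antipodally symmetric ones: every complex maximizer has the form $c\,e^{i\xi\cdot\omega}F(\omega)$ with $F$ such a nonnegative even maximizer. Thus it suffices to show that, in the stated range of $(d,p)$, the only nonnegative even maximizers $F$ are constants; then $f(\omega)=c\,e^{i\xi\cdot\omega}$ as claimed.

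Next I would split according to the exponent. If $p\geq 6$ is an even integer and $d\in\{2,3,4,5,6\}$, Theorem~\ref{thm:spheres} says directly that constant functions are the unique real-valued maximizers of $\Phi_{d,p}$, so in particular the nonnegative even maximizer $F$ must be constant, and we are done. The only remaining case inside the range $p\geq 4$ is $p=4$ (again for $d\in\{2,3,4,5,6\}$), which is not covered by Theorem~\ref{thm:spheres}. Here I would invoke the classical $L^2\to L^4$ sharp extension results: the $d=2$ case is exactly Foschi's inequality~\eqref{e:foschi2} discussed in Section~\ref{s:circle}, and the analogous statement that constants are the unique (nonnegative, hence up to modulation all) maximizers of $\Phi_{d,4}$ for $d\in\{2,3,4,5,6\}$ is precisely the content of the five black $L^2\to L^4$ entries on Figure~\ref{fig:PDP}, surveyed in \cite{FOS17} and attributed there (and in \cite{OSQ21a}). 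Feeding ``$F$ constant'' back into Theorem~\ref{thm:complex} then yields $f(\omega)=c\,e^{i\xi\cdot\omega}$.

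For the last sentence of the Corollary, the argument is identical but with $d=1$: Theorem~\ref{thm:spheres} asserts that, \emph{conditionally} on constants maximizing $\Phi_{1,6}$, constants are the unique real-valued maximizers of $\Phi_{1,p}$ for every even $p>6$, i.e.\@ for $p\in\{8,10,12,\dots\}$. Combining this with Theorem~\ref{thm:complex} (which is unconditional and valid for $d=1$, $p\geq 6$ even) in the same way as above gives that all complex-valued maximizers of $\Phi_{1,p}$, $p\geq 8$ even, are of the form $c\,e^{i\xi\omega}$, under the stated proviso. Note that $p=6$ itself is excluded from the conclusion precisely because the hypothesis ``constants maximize $\Phi_{1,6}$'' is exactly the open problem, so one cannot assert uniqueness there; and $p=4$ does not occur for $d=1$ since $2+\tfrac4d=6>4$.

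The main obstacle is not in this deduction itself, which is essentially bookkeeping once Theorems~\ref{thm:spheres} and \ref{thm:complex} are in hand; rather, the real content has already been spent, namely establishing Theorem~\ref{thm:spheres} (the uniqueness of constants among real maximizers for $d\leq 6$, $p\geq 6$ even) and Theorem~\ref{thm:complex} (the modulation-plus-symmetrization reduction). The one genuine subtlety to check carefully is that the $p=4$ endpoint is handled by the correct external input for each $d\in\{2,\dots,6\}$ — Foschi for $d=2$, and the higher-dimensional $L^2\to L^4$ sharp inequalities for $d\in\{3,4,5,6\}$ — and that uniqueness (not merely sharpness of the constant) is available there in the nonnegative even class, so that Theorem~\ref{thm:complex} can be applied to conclude.
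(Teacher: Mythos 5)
Your proposal is correct and matches the paper's route: the paper states this corollary as an immediate consequence of Theorems \ref{thm:spheres} and \ref{thm:complex}, which is exactly the deduction you carry out. You are in fact slightly more careful than the survey's one-line justification, since you explicitly note that the $p=4$ case (not covered by Theorem \ref{thm:spheres}) must be supplied by the previously known sharp $L^2\to L^4$ results with uniqueness for $d\in\{2,\dots,6\}$ — the black entries of Figure \ref{fig:PDP} — which is indeed the implicit input.
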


\subsection{A sharp extension inequality on $\mathbb S^{7}$}
The study of sharp {\it weighted} spherical extension estimates is  linked to the question of stability of such estimates, and was very recently inaugurated in \cite{CNOS21}.
In particular, the sharp weighted extension inequality from \cite[Theorem 1]{CNOS21} leads to the following result which is the first instance of a sharp extension inequality on $\mathbb S^7$.
\begin{theorem}[\cite{CNOS21}]
    For every $a>\tfrac{2^{25}\pi^2}{5^2  7^2  11}$, the following sharp inequality holds:
    \begin{equation}\label{eq:corrector_estimate}
        \int_{\mathbb R^8} \lvert \widehat{f\sigma}(x)\rvert^4\, \textup d x +a\left\lvert\,\, \int_{\mathbb S^7}  f(\omega)\, \textup d \sigma(\omega)\right\rvert^4 \le {\bf W}_a \left(\,\, \int_{\mathbb S^7} \lvert f(\omega)\rvert^2\, \textup d \sigma(\omega)\right)^2
    \end{equation}
    with optimal constant given by
    \begin{equation*}
        {\bf W}_a=\int_{\mathbb R^8} \widehat{\sigma}(x)^4\frac{\textup d x}{\sigma(\mathbb S^7)^2} + a \sigma(\mathbb S^7)^2
    \end{equation*}
    Equality in \eqref{eq:corrector_estimate} occurs if and only if $f$ is constant on $\mathbb S^7$.
\end{theorem}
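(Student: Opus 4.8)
The plan is to reduce the weighted inequality \eqref{eq:corrector_estimate} to an \emph{orthogonality-plus-positivity} statement about the decomposition of $f$ into its constant part and its mean-zero part. Write $f = c_0 + g$ on $\mathbb S^7$, where $c_0 = \sigma(\mathbb S^7)^{-1}\int_{\mathbb S^7} f\,\textup d\sigma$ and $\int_{\mathbb S^7} g\,\textup d\sigma = 0$; by the Pythagorean theorem, $\|f\|_{L^2(\sigma)}^2 = |c_0|^2\sigma(\mathbb S^7) + \|g\|_{L^2(\sigma)}^2$, so the right-hand side of \eqref{eq:corrector_estimate} expands cleanly in powers of $|c_0|$ and $\|g\|_{L^2}$. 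The task is then to show that the left-hand side, also expanded in this decomposition, is dominated term by term, with equality forcing $g\equiv 0$. Since $p=4$ is algebraic, $\int_{\mathbb R^8}|\widehat{f\sigma}|^4 = (2\pi)^8\|f\sigma\ast f_\star\sigma\|_{L^2(\mathbb R^8)}^2$ by Plancherel, and this quadratic-in-$f\sigma\ast f_\star\sigma$ quantity is itself a quartic form in $f$ that one expands multilinearly in $c_0$ and $g$.

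First I would invoke \cite[Theorem 1]{CNOS21}, the sharp weighted extension inequality that the statement explicitly cites, which presumably packages exactly the needed estimate for the mean-zero component together with identification of the equality cases; the content of the present theorem is then the translation of that abstract statement into the concrete form \eqref{eq:corrector_estimate} on $\mathbb S^7$, together with verification of the threshold $a > 2^{25}\pi^2/(5^2 7^2 11)$. So the bulk of the work is: (i) compute $\int_{\mathbb R^8}\widehat\sigma(x)^4\,\textup d x$ and $\sigma(\mathbb S^7)$ explicitly, using the $(\sigma\ast\sigma)$ and $(\sigma\ast\sigma\ast\sigma)$-type formulae in the spirit of Lemma \ref{lem:2-fold} but in dimension $d=7$ (equivalently, via the uniform random-walk densities $p_k$ in $\R^8$ discussed in \S\ref{sec:PT}), which is what produces the arithmetic constant $2^{25}\pi^2/(5^2 7^2 11)$; and (ii) show that the cross terms between $c_0$ and $g$ in the expansion of $\int|\widehat{f\sigma}|^4$ vanish by rotational symmetry — the same mechanism by which $\Lambda(\mathbf 1,\mathbf 1,\mathbf 1,g)=0$ in Foschi's argument, since $g\sigma$ paired against a radial convolution kernel integrates to zero.

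The heart of the matter is the quadratic-form estimate on the mean-zero part: writing out $\int_{\mathbb R^8}|\widehat{f\sigma}|^4$ with $f = c_0+g$ and discarding the vanishing cross terms leaves, schematically,
\[
\int_{\mathbb R^8}|\widehat{f\sigma}|^4 = |c_0|^4 \int_{\mathbb R^8}\widehat\sigma^4 + (\text{quadratic in } g,\ \text{with } |c_0|^2 \text{ coefficient}) + (\text{quartic in } g),
\]
and one must show that the quadratic-in-$g$ piece is at most $\mathbf W_a$ times the corresponding $2|c_0|^2\|g\|_{L^2}^2$ term on the right, \emph{with room to spare} that absorbs the higher-order terms — precisely where the lower bound on $a$ enters, making $\mathbf W_a = \sigma(\mathbb S^7)^{-2}\int\widehat\sigma^4 + a\sigma(\mathbb S^7)^2$ large enough to dominate the spectral gap of the relevant convolution operator on mean-zero functions. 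I expect \emph{this spectral estimate} — bounding the operator $g \mapsto (\sigma\ast\sigma)\ast(g\sigma) $ restricted to $\mathbb S^7$ (or its analogue appearing in the quartic expansion) on the orthogonal complement of constants — to be the main obstacle, and the one genuinely imported from \cite{CNOS21}; controlling its largest eigenvalue via the Funk--Hecke theorem / Gegenbauer expansion is what pins down both the threshold for $a$ and the rigidity statement that equality holds only for $g\equiv 0$, i.e.\ only for constant $f$.
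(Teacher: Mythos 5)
The survey offers no proof of this theorem --- it is imported directly from \cite[Theorem 1]{CNOS21} --- so your plan of invoking that theorem and checking the translation and explicit constants is, at the level of logical structure, exactly what the paper does; the question is whether your sketched mechanism would reconstruct the missing argument, and there it has two concrete gaps. First, the expansion of $\int_{\R^8}|\widehat{f\sigma}|^4$ in $f=c_0+g$ is not of the form ``$|c_0|^4$-term plus quadratic-in-$g$ plus quartic-in-$g$'': the radial-symmetry mechanism behind $\Lambda({\bf 1},{\bf 1},{\bf 1},g)=0$ (namely that $\sigma\ast\sigma\ast\sigma$ restricted to the sphere is constant, so a mean-zero $g\sigma$ integrates it to zero) kills only the terms containing exactly one factor of $g$. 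The cubic terms of the shape $\Lambda(g,g,g,{\bf 1})$ do not vanish for general mean-zero $g$ and must be estimated and absorbed; this is a substantive part of any such proof and one source of the explicit (non-optimal) threshold for $a$, contrary to your suggestion that the threshold drops out of computing $\int_{\R^8}\widehat\sigma^4$ and $\sigma(\mathbb S^7)$, or out of a single eigenvalue bound.

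Second, the ``room to spare'' step conflates the local second-variation analysis with the global inequality. The weight $a\,\lvert\int_{\mathbb S^7}f\,\textup d\sigma\rvert^4$ vanishes on mean-zero $f$, so taking $c_0=0$ in \eqref{eq:corrector_estimate} shows the theorem contains the assertion $\|\widehat{g\sigma}\|_{L^4(\R^8)}^4\le {\bf W}_a\|g\|_{L^2(\mathbb S^7)}^4$ for every mean-zero $g$; no Funk--Hecke/Gegenbauer bound on the quadratic-in-$g$ piece (the operator built from $\sigma\ast\sigma$ on the orthogonal complement of constants) can deliver that, because when $\|g\|_{L^2}\gtrsim|c_0|$ the quartic-in-$g$ term dominates and there is nothing left to absorb it into. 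This is precisely the delicate point on $\mathbb S^7$: the sharp unweighted $L^4$ constant there is unknown, constants are not known to maximize it, and the Foschi/Cauchy--Schwarz reduction that works for $2\le d\le 6$ breaks down at $d=7$ --- which is why the weighted inequality is of interest in the first place. So the genuinely hard content --- a global argument controlling the full quartic functional on mean-zero functions, with quantitative constants sharp enough to certify the explicit threshold $\tfrac{2^{25}\pi^2}{5^2 7^2 11}$, and with enough strictness to force $g\equiv 0$ at equality --- is exactly what you defer wholesale to \cite{CNOS21}; the spectral computation you describe would at best show that constants are local maximizers of the weighted functional, not the sharp global inequality nor the characterization of equality cases.
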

 We emphasize that constants are the unique {\it complex-valued} maximizers for \eqref{eq:corrector_estimate}, in contrast to the situation considered in \S \ref{sec:complex}.
 An interesting open problem is to lower the value of the threshold $\tfrac{2^{25}\pi^2}{5^2  7^2  11}$, hopefully all the way down to 0.

\section{Agmon--H\"ormander type estimates} 
\label{s:agmonh}
In this section, we describe some simple estimates for the  extension operator on spheres which, perhaps surprisingly, are {\it not always} maximized by constants.
For simplicity we restrict attention to the circle $\mathbb S^1\subset\R^2$. However,  analogous results have been recently proved in all dimensions $d\geq 1$; see \cite{NOS21}.

Our starting point is the Agmon--Hörmander estimate on the circle,
\begin{equation}\label{eq:AgHor}
\frac1R\int_{B_R} |\widehat{f\sigma}(x)|^2 \frac{\textup d x}{(2\pi)^2} \leq {\bf C}_R \int_{\mathbb S^1} |f(\omega) |^2\textup d \sigma(\omega),
\end{equation}
where $B_R\subset\R^2$ denotes a ball of arbitrary radius $R>0$ centered at the origin and $\sigma$ stands for the usual arclength measure on $\mathbb S^1$.
Agmon and Hörmander \cite{AH76} observed that \eqref{eq:AgHor} holds with a constant ${\bf C}_R$ that approaches $\frac1{\pi}$ as $R\to\infty$, but did not investigate its optimal value.
In Theorem \ref{thm:AH1} below, such optimal value is obtained in terms of the auxiliary quantities
\begin{equation}\label{eq:LamDef}
\Lambda_R^k:=\frac R2 J_k^2(R)-\frac R2 J_{k-1}(R)J_{k+1}(R),
\end{equation}
where $J_n$ denotes the usual Bessel function; see also Figure \ref{fig:AgHor}.
\begin{theorem}[\cite{NOS21}]\label{thm:AH1}
For each $R>0$,
\[{\bf C}_R
=\left\{ \begin{array}{ll}
\Lambda_R^0, & \text{if } (J_0J_1)(R)\geq 0,\\
\Lambda_R^1, & \text{if } (J_0J_1)(R)\leq 0.
\end{array} \right.
\]
The corresponding space of maximizers is given by
\[\mathcal M_R=\left\{ \begin{array}{ll}
\mathcal H_0, & \text{if }(J_0J_1)(R)> 0, \\
\mathcal H_1, & \text{if }(J_0J_1)(R)< 0,\\
\mathcal H_0\oplus \mathcal H_1, & \text{if }J_0(R)= 0,\\
\mathcal H_0\oplus \mathcal H_1\oplus \mathcal H_2, & \text{if }J_1(R)= 0.
\end{array} \right.\]
where $\mathcal H_k\subset L^2(\mathbb S^1)$ denotes the vector space of degree $k$ circular harmonics. 
\end{theorem}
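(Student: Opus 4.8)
The plan is to diagonalize the quadratic form on the left-hand side of~\eqref{eq:AgHor} in the basis of circular harmonics. Writing $f=\sum_{k\in\mathbb Z}c_k e^{ik\theta}$ on $\mathbb S^1$, one computes $\widehat{f\sigma}(x)$ explicitly in polar coordinates $x=(s\cos\varphi,s\sin\varphi)$ via the Jacobi--Anger expansion $e^{-is\cos\vartheta}=\sum_k (-i)^k J_k(s) e^{ik\vartheta}$, so that $\widehat{f\sigma}$ becomes a superposition of the functions $(-i)^k J_k(s)e^{ik\varphi}$, weighted by the $c_k$ (up to a $2\pi$ factor). By orthogonality of $\{e^{ik\varphi}\}$ on the circle of radius $s$, the cross terms vanish after the angular integration, and
\[
\frac1R\int_{B_R}|\widehat{f\sigma}(x)|^2\,\frac{\textup dx}{(2\pi)^2}
=\sum_{k\in\mathbb Z}\lambda_R^k\,|c_k|^2,
\qquad
\lambda_R^k:=\frac1R\int_0^R J_k(s)^2\,s\,\textup ds,
\]
while the right-hand side of~\eqref{eq:AgHor} is (a fixed multiple of) $\sum_k|c_k|^2$. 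Hence ${\bf C}_R=\sup_k \lambda_R^k$ and the maximizer space is spanned by those circular harmonics $\mathcal H_k$ (here $k$ and $-k$ give the same $\lambda_R^k$) realizing the supremum.

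The second step is to evaluate $\lambda_R^k$ in closed form. The Lommel-type antiderivative $\int_0^R J_k(s)^2 s\,\textup ds = \tfrac{R^2}{2}\big(J_k(R)^2 - J_{k-1}(R)J_{k+1}(R)\big)$ gives exactly $\lambda_R^k = \Lambda_R^k$ as defined in~\eqref{eq:LamDef}, after the $1/R$ normalization. So the whole problem reduces to the arithmetic question: for which $k$ is $\Lambda_R^k$ maximal, as a function of $R>0$? I would establish monotonicity in $k$ for fixed $R$, i.e. show that $\Lambda_R^k$ is decreasing in $k\ge 0$, which would immediately force the supremum to be attained at $k=0$ or $k=1$. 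The cleanest route is to compute the difference $\Lambda_R^{k}-\Lambda_R^{k+1}$ and simplify it using the three-term recurrence $J_{k-1}(R)+J_{k+1}(R)=\tfrac{2k}{R}J_k(R)$; I expect this difference to collapse to something of constant sign, plausibly a multiple of $J_k(R)^2$ or $J_k(R)J_{k+1}(R)$ with a sign governed by the same quantities appearing in the statement. In particular the dichotomy between $k=0$ and $k=1$ should be controlled precisely by the sign of $(J_0J_1)(R)$: one shows $\Lambda_R^0-\Lambda_R^1$ is a positive multiple of $(J_0 J_1)(R)$ (or of $J_0(R)J_1(R)$ up to an everywhere-positive factor), which yields the stated formula for ${\bf C}_R$ and, reading off when equality of $\Lambda_R^0$ and $\Lambda_R^1$ occurs, the branch $\mathcal H_0\oplus\mathcal H_1$ exactly when $J_0(R)=0$.

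The last piece is the anomalous case $J_1(R)=0$, where the maximizer space jumps to $\mathcal H_0\oplus\mathcal H_1\oplus\mathcal H_2$. Here one notes that when $J_1(R)=0$ the recurrence gives $J_2(R)=-J_0(R)$, and then $\Lambda_R^2=\tfrac R2 J_2(R)^2-\tfrac R2 J_1(R)J_3(R)=\tfrac R2 J_0(R)^2=\Lambda_R^0$; combined with $J_1(R)=0$ making $(J_0J_1)(R)=0$ so that $\Lambda_R^0=\Lambda_R^1$ as well, all three of $\Lambda_R^0,\Lambda_R^1,\Lambda_R^2$ coincide and equal ${\bf C}_R$ (one must also confirm $\Lambda_R^k<\Lambda_R^0$ for $k\ge 3$ at such $R$, which follows from the monotonicity established above once it is made strict for $k\ge 2$). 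I expect the main obstacle to be precisely this monotonicity/strict-monotonicity analysis of $k\mapsto\Lambda_R^k$: proving that $\Lambda_R^k-\Lambda_R^{k+1}$ has a clean sign for all $k$ and all $R$, and carefully tracking the non-strict cases, is where the Bessel-function identities have to be deployed with care; everything else is bookkeeping in the circular-harmonic decomposition.
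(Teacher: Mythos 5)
Your scheme coincides with the paper's: diagonalize the quadratic form in circular harmonics via the Jacobi--Anger expansion, identify the eigenvalues as $\Lambda_R^k=\frac1R\int_0^R J_k^2(r)\,r\,\textup d r$ through the Lommel antiderivative, and then compare the $\Lambda_R^k$ using the three-term recurrence. The identity you anticipate, $\Lambda_R^k-\Lambda_R^{k+1}=J_k(R)J_{k+1}(R)$, is in fact correct for every $k\geq 0$: from $\Lambda_R^k=\tfrac R2\bigl(J_k^2-J_{k-1}J_{k+1}\bigr)$ and $J_{k-1}+J_{k+1}=\tfrac{2k}R J_k$ the difference telescopes cleanly. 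In particular $\Lambda_R^0-\Lambda_R^1=J_0(R)J_1(R)$, matching the dichotomy in the statement, and your treatment of the case $J_1(R)=0$ (so that $J_2=-J_0$ and $\Lambda_R^0=\Lambda_R^1=\Lambda_R^2$) is correct.

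However, there is a genuine gap. You assert that $k\mapsto\Lambda_R^k$ is decreasing, and that this would ``immediately force the supremum at $k=0$ or $k=1$.'' The monotonicity is false. Since the increment is $J_k(R)J_{k+1}(R)$ and the positive zeros of $J_k$ and $J_{k+1}$ interlace, its sign oscillates in $k$ for fixed $R$; for instance at $R=10$ one has $J_3(10)>0>J_4(10)$, so $\Lambda_{10}^3<\Lambda_{10}^4$. The whole content of the theorem is the inequality $\Lambda_R^k\leq\max(\Lambda_R^0,\Lambda_R^1)$ for every $k\geq 2$, equivalently that the partial sums $\sum_{j=1}^{m-1}J_j(R)J_{j+1}(R)$ of the telescoping series stay nonnegative, and this does \emph{not} follow from the two-term computation you record, since the individual terms alternate in sign. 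This is precisely where the cited work has to invest real effort with finer Bessel-function input. As a consequence, the parenthetical in your $J_1(R)=0$ discussion -- that $\Lambda_R^k<\Lambda_R^0$ for $k\geq 3$ ``follows from the monotonicity established above'' -- also does not go through.
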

One remarkable feature of Theorem \ref{thm:AH1} is that constants are seen to not always be maximizers, even though they are the unique functions which are invariant under the full rotational symmetry group of \eqref{eq:AgHor}.
In this way Theorem \ref{thm:AH1} identifies an instance of {\it symmetry breaking}, which to the best of our knowledge had not yet been observed for an estimate involving the spherical extension operator.

The next natural question concerns the {\it stability} of inequality \eqref{eq:AgHor},
which can be phrased in terms of lower bounds for the following {\it deficit functional}:
\[\delta_R[f]:={\bf C}_R \|f\|_{L^2(\mathbb S^1)}^2 - \frac1R\int_{B_R} |\widehat{f\sigma}(x)|^2 \frac{\textup d x}{(2\pi)^2}.\]
Clearly, $\delta_R[f]\geq 0$ for every $f\in L^2(\mathbb S^1)$ but, in the spirit of Bianchi--Egnell \cite{BE91}, more can be said; see also Figure \ref{fig:AgHor}, and recall the space $\mathcal M_R$ which has been defined in Theorem \ref{thm:AH1}.
\begin{theorem}[\cite{NOS21}]\label{thm:Stability}
The following sharp two-sided inequality holds:
\begin{equation}\label{eq:sharpenedAgHor}
{\bf S}_R\, \textup {dist}^2(f,\mathcal M_R)
\leq \delta_R[f]
\leq {\bf C}_R \, \textup{dist}^2(f,\mathcal M_R).
\end{equation}
Equality occurs in the right-hand side inequality of \eqref{eq:sharpenedAgHor} if and only if $f\in\mathcal M_R$. 
Equality occurs in the left-hand side inequality of \eqref{eq:sharpenedAgHor} if and only if $f\in\mathcal M_R\oplus \mathcal E_R$, where:

\begin{center}
\begin{tabular}{ c|c|c:c } 
 ${\bf S}_R=$ & $\mathcal E_R=$ & \textup{if} & \textup{and}  \\ 
 \hline
$\Lambda_R^0-\Lambda_R^1$ & $\mathcal H_1$  && $(J_1J_2)(R)>0$   \\ 
$=$
 & $\mathcal H_1\oplus\mathcal H_2\oplus \mathcal H_3$& $(J_0J_1)(R)>0$& $(J_1J_2)(R)=0$   \\ 
  $\Lambda_R^0-\Lambda_R^2$ & $\mathcal H_2$&&$(J_1J_2)(R)<0$  \\ 
\hline
 $\Lambda_R^1-\Lambda_R^0$ & $\mathcal H_0$&& $(J_0J_1+J_1J_2+J_2J_3)(R)>0$ \\
$=$ & $\mathcal H_0\oplus \mathcal H_3$ & $(J_0J_1)(R)<0$ & $(J_0J_1+J_1J_2+J_2J_3)(R)=0$ \\ 
$\Lambda_R^1-\Lambda_R^3$ & $\mathcal H_3$& &$(J_0J_1+J_1J_2+J_2J_3)(R)<0$ \\ 
\hline
$\Lambda_R^0-\Lambda_R^2$ & $\mathcal H_2$&$J_0(R)=0$ &$(J_2J_3)(R)>0$ \\ 
 $\Lambda_R^0-\Lambda_R^3$ & $\mathcal H_3$&&$(J_2J_3)(R)<0$ \\ 
\hline
$\Lambda_R^0-\Lambda_R^3$ & $\mathcal H_3$&$J_1(R)=0$&$(J_3J_4)(R)>0$   \\ 
 $\Lambda_R^0-\Lambda_R^4$ & $\mathcal H_4$&&$(J_3J_4)(R)<0$ \\
 \hline
\end{tabular}
\end{center}
\end{theorem}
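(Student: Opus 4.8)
The plan is to diagonalize the whole problem in the basis of circular harmonics, reducing both the deficit functional and the distance to $\mathcal M_R$ to weighted $\ell^2$ sums, and then to read off the sharp two-sided bounds as comparisons between the eigenvalues $\Lambda_R^k$. First I would expand $f=\sum_{k\in\mathbb Z}c_k e_k$ in the orthonormal basis $e_k(\omega)=(2\pi)^{-1/2}e^{ik\theta}$ of $L^2(\mathbb S^1)$, where $\omega=(\cos\theta,\sin\theta)$. Since the operator $f\mapsto \frac1R\int_{B_R}|\widehat{f\sigma}|^2\,\frac{\textup dx}{(2\pi)^2}$ commutes with rotations, it is diagonal in this basis; the standard computation using $\widehat{e_k\sigma}(x)\propto i^k J_k(|x|)e^{ik\arg x}$ together with the Bessel integral identity $\int_0^R J_k(r)^2\, r\, \textup dr = \frac{R^2}{2}\big(J_k(R)^2-J_{k-1}(R)J_{k+1}(R)\big)$ identifies the eigenvalue on $\mathcal H_k$ as exactly $\Lambda_R^k$ from \eqref{eq:LamDef} (this is precisely the computation underlying Theorem \ref{thm:AH1}). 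Hence
\[
\delta_R[f]=\sum_{k\in\mathbb Z}\big({\bf C}_R-\Lambda_R^{|k|}\big)\,|c_k|^2,
\]
and ${\bf C}_R=\max_k \Lambda_R^{|k|}$, with $\mathcal M_R$ the span of the $e_k$ realizing the maximum — consistent with Theorem \ref{thm:AH1}. All coefficients ${\bf C}_R-\Lambda_R^{|k|}$ are nonnegative, so $\delta_R\geq 0$ is immediate, and the upper bound $\delta_R[f]\le{\bf C}_R\,\textup{dist}^2(f,\mathcal M_R)$ follows since dropping the components in $\mathcal M_R$ only removes zero terms while each surviving coefficient is $\le{\bf C}_R$ (using $\Lambda_R^{|k|}\ge 0$, which one checks from the Bessel integral being an integral of a nonnegative quantity). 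Equality on the right forces every nonzero $c_k$ with $k\notin\mathcal M_R$ to have $\Lambda_R^{|k|}=0$; a separate argument that $\Lambda_R^{|k|}>0$ strictly whenever $k\notin\mathcal M_R$ — i.e. that the Bessel integrals are strictly positive on the relevant range — then pins down equality to $f\in\mathcal M_R$.

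For the sharp lower bound, write $g=f-P_{\mathcal M_R}f=\sum_{k\notin\mathcal M_R}c_k e_k$, so that $\textup{dist}^2(f,\mathcal M_R)=\sum_{k\notin\mathcal M_R}|c_k|^2$ and $\delta_R[f]=\sum_{k\notin\mathcal M_R}({\bf C}_R-\Lambda_R^{|k|})|c_k|^2$. Thus ${\bf S}_R=\inf_{k\notin\mathcal M_R}({\bf C}_R-\Lambda_R^{|k|})$, attained exactly on those $k$ minimizing $\Lambda_R^{|k|}$ over the complement, i.e.\ $\mathcal E_R$ is the span of the corresponding circular harmonics and equality holds iff $f\in\mathcal M_R\oplus\mathcal E_R$. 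The content of the theorem is therefore entirely the combinatorial-analytic problem of determining, for each $R$, which index $k$ is the second-largest eigenvalue. This is where I expect the real work: one must understand the ordering of the sequence $(\Lambda_R^k)_{k\ge 0}$ as a function of $R$. The natural tool is the recursion and monotonicity properties of Bessel functions: using $J_{k-1}+J_{k+1}=\frac{2k}{r}J_k$ and $J_{k-1}-J_{k+1}=2J_k'$ one derives a clean telescoping identity
\[
\Lambda_R^{k-1}-\Lambda_R^{k}=R\,J_{k-1}(R)J_{k}(R),
\]
so that consecutive differences of eigenvalues are just products of consecutive Bessel functions. This is exactly why the case distinctions in Theorem \ref{thm:AH1} and in the table are governed by the signs of $(J_{k}J_{k+1})(R)$: summing the telescoping identity, $\Lambda_R^0-\Lambda_R^m=R\sum_{j=1}^{m}J_{j-1}(R)J_j(R)$, which explains entries like the sign of $(J_0J_1+J_1J_2+J_2J_3)(R)$ appearing in the $(J_0J_1)(R)<0$ block.

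With that identity in hand, the proof becomes a finite case analysis. When $(J_0J_1)(R)>0$ we have $\Lambda_R^0>\Lambda_R^1$, so $\mathcal M_R=\mathcal H_0$ and ${\bf S}_R=\Lambda_R^0-\max\{\Lambda_R^1,\Lambda_R^2,\dots\}$; one shows the max over $k\ge 1$ is attained at $k=1$ or $k=2$ according to the sign of $\Lambda_R^1-\Lambda_R^2=R\,J_1(R)J_2(R)$, with the degenerate case $(J_1J_2)(R)=0$ forcing $\Lambda_R^1=\Lambda_R^2$ and, after checking $\Lambda_R^2=\Lambda_R^3$ is then also forced (again via the telescoping identity and the location of the Bessel zero), the enlarged $\mathcal E_R=\mathcal H_1\oplus\mathcal H_2\oplus\mathcal H_3$. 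The remaining rows — the $(J_0J_1)(R)<0$ block and the two degenerate blocks $J_0(R)=0$ and $J_1(R)=0$ — are handled identically, each time reducing the comparison of infinitely many $\Lambda_R^k$ to finitely many sign conditions on products of consecutive Bessel functions, using the classical interlacing of Bessel zeros to discard all large $k$. The main obstacle is precisely this last point: proving rigorously that for every $R$ the supremum of $\Lambda_R^k$ over any cofinite set of indices is attained at one of the small indices listed, which requires quantitative decay/oscillation estimates for $J_k(R)$ and careful bookkeeping of Bessel zero interlacing; everything else is the bookkeeping of signs that the telescoping identity makes transparent.
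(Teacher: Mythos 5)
Your proposal follows essentially the same route as the paper's: diagonalize in circular harmonics so that both $\delta_R[f]$ and $\textup{dist}^2(f,\mathcal M_R)$ become weighted $\ell^2$ sums with weights ${\bf C}_R-\Lambda_R^k$, use the integral form $\Lambda_R^k=\frac1R\int_0^R J_k^2(r)\,r\,\textup d r$, and settle the ordering of the $\Lambda_R^k$ via Bessel recursions and the signs of products of consecutive Bessel functions at $R$, which is exactly the strategy of \cite{NOS21} sketched in Section \ref{s:agmonh}. One small correction: the telescoping identity is $\Lambda_R^{k-1}-\Lambda_R^k=J_{k-1}(R)J_k(R)$ with no extra factor of $R$, since $(J_{k-1}^2(r)-J_k^2(r))\,r=\frac{\textup d}{\textup d r}\left[r\,J_{k-1}(r)J_k(r)\right]$; as $R>0$ this slip does not affect any of your sign-based conclusions or the table entries.
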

\vspace{.5cm}

The proofs of Theorems \ref{thm:AH1} and \ref{thm:Stability} rely on two observations. Firstly, by orthogonality of the circular harmonic decomposition $f=\sum_{k\geq 0}  Y_k$, we have that
\[\frac1{R} \int_{B_R} \lvert \widehat{f\sigma}(x)\rvert^2\, \frac{\textup d x}{(2\pi)^2}=\sum_{k\geq 0} \Lambda_R^k \|Y_k\|_{L^2}^2,\]
where the $\Lambda_R^k$ have been defined in \eqref{eq:LamDef}.
Secondly, 
 ${\bf C}_R = \sup_{k\geq 0} \Lambda_R^k$, where $f$ attains the supremum if and only if $f=\sum_{} Y_{k_j}$, for some 
$k_j\in\{k\geq 0: \Lambda_R^k=\sup_h \Lambda_R^h\}$; see also \cite{BS17}.
In our case of interest, we can conveniently rewrite the $\Lambda_R^k$ in integral form,
\[\Lambda_R^k = \frac1R \int_0^R  J_k^2(r) r\textup d r,\]
and invoke certain well-known Bessel recursions to start gaining control on both extremal problems corresponding to Theorems  \ref{thm:AH1} and \ref{thm:Stability}.

The above explicit expression for the optimal constant ${\bf C}_R$ leads to the following {\it loss-of-regularity}
\footnote{Similar phenomena have been recently observed in the related setting of the Brascamp--Lieb inequalities \cite{BBCF17, BBFL18}.} 
statement: ${\bf C}_R$ is {\it not} a differentiable function of $R$ at each positive zero of $J_0J_1$, but it defines a Lipschitz function on $(0,\infty)$ which is real-analytic between any two consecutive zeros of $J_0J_1$. We note that such zeros precisely correspond to those values of $R$ at which a jump in the dimension of the space of maximizers $\mathcal M_R$ is observed. 

The behaviour of the stability constant ${\bf S}_R$ is also interesting. Since the deficit functional $\delta_R[f]$ clearly defines a continuous function of $R$, the quantity ${\bf S}_R$ must have a jump discontinuity at the positive zeros of $J_1J_2$, where $\textup {dist}(f, \mathcal M_R)$ likewise jumps. Moreover, the explicit expression for ${\bf S}_R$ can be used to establish that it defines a piecewise real-analytic function of $R$ between any two consecutive zeros of $J_0J_1$, which fails to be differentiable at each positive zero of $J_2$.

\begin{figure}
    \centering
    \includegraphics[width=.8\textwidth]{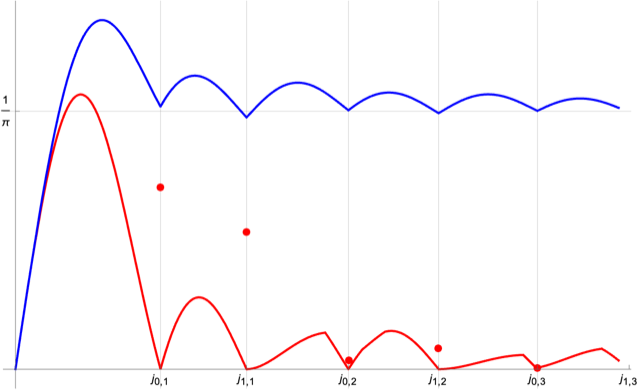}
    \caption{\footnotesize Optimal constant ${\bf C}_R$ (blue) and stability constant ${\bf S}_R$ (red) for the Agmon--Hörmander estimate on the circle when $0<R<10$.}
    \label{fig:AgHor}
\end{figure}

\section{Cones} 
\label{s:cones}
We consider for $d\ge 2$ the two-sheeted cone 
$$\mathbb{K}^{d}:=\{(\tau,\xi)\in \mathbb R \times \mathbb R^d: \tau^2=\lvert\xi\rvert^2\}.$$
It is the conical section
\eqref{e:plane} with
$\alpha=0$, $\beta=1$ and $\gamma=0$.
The measure $\sigma$ as in
\eqref{eq:sigma_measure_intro}
becomes 
$$\int _{\mathbb K^d} f \, \textup d\sigma =\int_{\mathbb R^{d+1}} f(\tau, \xi)\delta(\tau^2-|\xi|^2)
\textup d\tau \textup d\xi.$$
We split $\sigma=\sigma_++\sigma_-$ via
\begin{equation}\label{eqct:conic_measure_general}
    \begin{array}{cc}
    \delta(\tau^2-\lvert\xi\rvert^2)=\frac{\delta(\tau -\lvert\xi\rvert)}{2\lvert\xi\rvert}+
    \frac{\delta(\tau +\lvert\xi\rvert)}{2\lvert\xi\rvert}.
    \end{array}
\end{equation}
This follows from $\tau^2-\lvert\xi\rvert^2=(\tau-\lvert\xi\rvert)(\tau+\lvert\xi\rvert)$ by formal manipulations of delta calculus; see~\cite[Appendix~A]{FOS17}. The expressions of $\sigma_{\pm}$ involve the singular term $1/\lvert\xi\rvert$, but this is singular on a set of measure zero and it is locally integrable. We conclude that $\sigma_{\pm}$ are well-defined, and so is $\sigma$, with explicit formulae
\begin{equation}\label{eq:explicit_measure_cone}\notag
    \int_{\mathbb K^d} g(\tau, \xi)\, \textup d\sigma_\pm = \int_{\mathbb R^d} \frac{ g(\pm\lvert \xi\rvert, \xi)}{2\lvert \xi\rvert}\, \textup d\xi.
\end{equation}

By the Strichartz estimates~\cite{St77}, the  extension operator $\mathcal{E}f:=\widehat{f\sigma}$ defines a bounded linear map from $L^2(\sigma)$ into $L^p(\mathbb R^{1+d})$ for $p=2\frac{d+1}{d-1}$. This is the only value of $p$ for which such boundedness can hold, due to the scaling symmetry $\sigma_\pm(\lambda \tau, \lambda \xi)=\lvert\lambda\rvert^{-2}\sigma_\pm(\tau, \xi)$. 
Among all conic sections, only paraboloids have similar scaling symmetries as the cones. The
phenomenon of such scaling does not occur for spheres or hyperboloids discussed in this survey. 

We denote the conjectured maximizer of~\eqref{e:extfunctional} by
\begin{equation}\label{eq:Foschian}\notag
        \displaystyle f_\star(\tau, \xi):=e^{-\lvert \tau\rvert}.
\end{equation}
We will study whether $f_\star$ is a maximizer for $\lVert \mathcal E f\rVert_{L^p(\mathbb R^{1+d})}^p\lVert f\rVert_{L^2(\sigma)}^{-p}$ when $p=2\frac{d+1}{d-1}$. This is the $\mathbb K^d$-version of problem~\eqref{e:quotient} from the introduction. 

We will repeatedly use the property that $u(t, x)=\mathcal E f(t, x)$ is a solution of the wave equation. Indeed, by differentiating 
\begin{equation*}
     u(t, x)=\int_{\mathbb K^d} f(\tau, \xi)e^{-i(t\tau + x\cdot \xi)} \textup d(\sigma_++\sigma_-)(\tau, \xi)
\end{equation*}
we see that $u$ satisfies 
\begin{equation}\label{e:wave}
\partial_t^2 u =\Delta u,
\end{equation} 
with initial data 
\begin{equation}\label{eq:f_to_initial_data}
    \begin{array}{cc}
        \widehat{u}(0,\xi)=\frac{f(\lvert\xi\rvert, \xi)+f(-\lvert\xi\rvert, \xi)}{2\lvert\xi\rvert}, & \partial_t \widehat{u}(0,\xi)=\frac{-if(\lvert\xi\rvert, \xi)+if(-\lvert\xi\rvert, \xi)}{2},
    \end{array}
\end{equation}
where we have used the spatial Fourier transform  $\widehat{v}(t,\xi)=\int_{\mathbb R^d} v(t,x)e^{-ix\cdot \xi}\, \textup d x$. In particular, the conjectured maximizer extension $u_\star=\mathcal E f_\star$ satisfies $\widehat{u}_\star(0, \xi)=e^{-\lvert\xi\rvert}/\lvert\xi\rvert$ and $\partial_t\widehat{u}_\star(0, \xi)=0$. An explicit computation reveals that, for some $C_d>0$,
\begin{equation}\label{eq:Foschian_initial_data}
    \begin{array}{cc}
        u_\star(0, x)=C_d(1+\lvert x \rvert^2)^{\frac{1-d}{2}}, & \partial_t u_\star(0, x)=0.
    \end{array}
\end{equation}

\subsection{Criticality of the conjectured maximizer via the Penrose transform}
This subsection is based on \cite{Ne18}.
Define an injective map from $(t,r)\in {\mathbb R}^2$ to $(T,R)\in \R^2$ 
by
\begin{equation}\label{eq:Penrose_Coords}\notag
    \begin{array}{cc}
      T=\arctan(t+r)+\arctan(t-r), & R=\arctan(t+r)-\arctan(t-r).
    \end{array}
\end{equation}
This is essentially the composition of a rotation by $\pi/4$ of ${\mathbb R}^2$,
a component-wise arctangent, and the inverse rotation. 

We use this to define an injective map $\cal{P}$ from the Minkowski spacetime $\mathbb R^{1+d}$ into $[-\pi, \pi]\times \mathbb S^d$ as follows. We consider a generic $(t,x)\in\R^{1+d}$ in polar coordinates $(t,r,\omega)$, with $r=|x|$ and $\omega=x/r$, and then map this to $(T, \cos R, \omega \sin R )$. 
The last two components of the latter define a point
$X=(\cos R , \omega\sin R )$ on the sphere $\mathbb S^{d}$, as claimed;
see Figure~\ref{fig:penrose}.
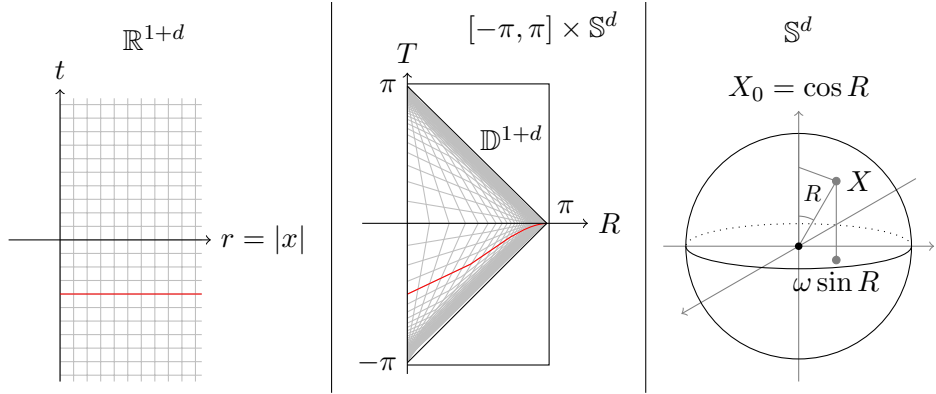
\begin{figure}
\centering
\begin{tabular}{c|c|c}
\begin{tikzpicture}[scale=0.6]
\draw[step=0.3, gray!50!white] (0, -3.14) grid (3.14, 3.14);
\draw[->] (-1.14, 0) -> (3.34,0) node[anchor=west] {$r=\lvert x \rvert$};
\draw[->] (0, -3.14) -> (0, 3.34) node[anchor=south]{$t$};
\draw[red] (0, -1.2) node[anchor=east] {}-- (3.14, -1.2) node[anchor=west]{} ;
\draw (3, 4.5) node[anchor=east] {$\mathbb R^{1+d}$};
\end{tikzpicture}
&
\begin{tikzpicture}[scale=0.6]
\foreach \t in {-8, -7.75, ..., 8}
    \draw[gray!50!white, domain=0:30] plot({rad(atan(\t+\x)-atan(\t-\x))}, {{rad(atan(\t+\x)+atan(\t-\x))}});
\foreach \r in {0, 0.25, ..., 5}
	\draw[gray!50!white, domain=-20:20] plot({rad(atan(\x+\r)- atan(\x-\r))}, {{rad(atan(\x+\r)+ atan(\x-\r))}});
	 \draw[red, domain=0:30] plot({rad(atan(-1+\x)- atan(-1-\x))}, {{rad(atan(-1+\x) + atan(-1-\x))}});
\draw[->] (-1, 0) -> (4,0) node[anchor=west] {$R$};
\draw[->] (0, -3.34) -> (0, 3.34) node[anchor=south]{$T$};
\draw (0, -pi+0.05) 
node[anchor=east] {$-\pi$} 
-- (pi-0.05, 0) 
node[anchor=south west]{$\pi$} 
-- (0, pi-0.1) 
node[anchor=east]{$\pi$}
;
\draw (pi/2-0.18, pi/2-0.18) node[anchor=south west]{$\mathbb D^{1+d}$};
\draw (3, 3.75) node[anchor=south] {$[-\pi, \pi]\times \mathbb{S}^d$};
\draw (0,-pi) -- (pi,-pi) -- (pi, pi-0.05) -- (0, pi-0.05);
\end{tikzpicture} 
&
\begin{tikzpicture}
            \draw[dotted] (1.5,0) arc (0:180:1.5 and 0.3);
            \draw[gray, ->] (-1.8, 0) -- (1.8,0) node[black, anchor=west]{$ $};
            \draw[gray, ->] (0, -1.8) -- (0, 1.8) node[black, anchor=south]{$X_0=\cos R$};
            \draw[gray, ->] (30:1.8) -- (30:-1.8) node[black, anchor=north east]{$ $};
            \draw (0, 0) circle (1.5);
            \draw (1.5,0) arc (0:-180:1.5 and 0.3);
            \draw[gray] (0,0)--(0, 1.5);
            \draw[gray, fill=gray] (0,0)--(60:1) circle (0.05) node[black, anchor=west]{$X$} -- (0, 1.05);
            \draw[gray, fill=gray] (60:1) -- +(0, -1.05) circle (0.05) node[black,  yshift=-8]{$\omega \sin R $};
            \draw[gray] (0,0.4) arc (90:60:0.4);
            \draw (95:0.5) node[anchor=south west, scale=0.8] {$R$};
            \draw (0, 2.585) node[anchor=south]{$\mathbb S^d$};
            \fill (0,0) circle (0.05);
    \end{tikzpicture}
\end{tabular}
\caption{\footnotesize The map $\mathcal P$ of $\mathbb R^{1+d}$ (left) onto the Penrose diamond $\mathbb D^{1+d}\subset [-\pi, \pi]\times \mathbb S^d$ (center). Here $R\in[0, \pi]$ denotes the radial coordinate on $\mathbb S^d$ (right).}
\label{fig:penrose}
\end{figure}

The map $\cal{P}$ was first introduced by Penrose~\cite{Pen64}, and its range is the following open submanifold of $[-\pi, \pi]\times \mathbb S^d$, known as the \emph{Penrose diamond}:
\begin{equation}\label{eq:penrose_diamond} 
    \mathbb D^{1+d}:=\{ (T, \cos R, \omega \sin R)\, :\, \omega\in \mathbb S^{d-1},\ R< \pi -\lvert T\rvert\}.
\end{equation}
The $(d+1)$-dimensional volume element of $\mathbb D^{1+d}$ reads $(\sin R)^{d-1}\textup d T \textup d R \textup d S(\omega)$, where $\textup d S(\omega)$ denotes the surface measure on $\mathbb S^{d-1}$. Similarly the volume element of $\mathbb R^{1+d}$ reads $r^{d-1}\textup d t \textup d r \textup d S(\omega)$. We compute the pushforward of this volume element via $\mathcal P$ as
\begin{equation}\label{eq:vol_element_pushforward}
    (\sin R)^{d-1}\textup d T \textup d R \textup d S(\omega)= 2^{d+1}[(1+(t+r)^2)(1+(t-r)^2)]^{-\frac{1+d}{2}}r^{d-1}\textup d t \textup d r \textup d S(\omega).
\end{equation}
An important feature of $\mathcal{P}$ is that it is conformal, meaning that
\begin{equation*}
    \textup d T^2 -\textup d R^2 -(\sin R)^2\textup d\omega^2 = 4[(1+(t+r)^2)(1+(t-r)^2)]^{-1}(\textup d t^2 -\textup d r^2 -r^2\textup d\omega^2).
\end{equation*}
Therefore we can construct solutions to the wave equation in $\mathbb R^{1+d}$ by pulling back solutions to the conformal wave equation 
\begin{equation*} 
    \partial_T^2 U = \Delta_{\mathbb S^d} U -\frac{(d-1)^2}{4} U
\end{equation*}
in $\mathbb D^{1+d}$; for details we refer to ~\cite[p.\@ 139]{Tz00}. As a consequence, for each spherical harmonic $Y_n=Y_n(X)$ on $\mathbb S^d$ of degree $n$, the  function 
\begin{equation}\label{eq:wave_solutions_family}
    u(t, x)=\left[(1+(t+r)^2)(1+(t-r)^2)\right]^{\frac{1-d}{4}}\!\!\!e^{-iT(n + \tfrac{d-1}{2})} Y_n(\cos R, \omega \sin R )
\end{equation}
satisfies the wave equation~\eqref{e:wave}. 

By the uniqueness of solutions to the wave equation, we see that $u=\mathcal Ef$ for $f\in L^2(\sigma)$, uniquely identified by solving the linear initial conditions~\eqref{eq:f_to_initial_data} for $f$.   
Taking a spherical harmonic of degree $n=0$, i.e.,  a constant function on $\mathbb S^d$, and considering the real part only, we obtain a new representation for the conjectured maximizer $u_\star=\mathcal Ef_\star$. Precisely,  
\begin{equation}\label{eq:conjectured_maximizer_penrose}\notag
    u_\star(t, x)=C_d \left[(1+(t+r)^2)(1+(t-r)^2)\right]^{\frac{1-d}{4}}\!\!\!\cos(T(\tfrac{d-1}{2})), 
\end{equation}
since this last expression is easily seen to satisfy \eqref{eq:Foschian_initial_data}.

Defining $C_\star$ via
\begin{equation}
    \label{e:optimal}\lVert \mathcal E f_\star \rVert_{L^{2\frac{d+1}{d-1}}({\mathbb R}^{1+d})}=C_{\star} \lVert f_\star\rVert_{L^2(\sigma)},
\end{equation}
we recall that the conjecture we are studying reads as
\begin{equation}\label{e:suboptimal}
 \lVert \mathcal Ef\rVert_{L^{2\frac{d+1}{d-1}}({\mathbb R}^{1+d})}\le C_{\star} \lVert f\rVert_{L^2(\sigma)}
\end{equation}
for all $f$.
We show that this conjecture is false 
in even dimension by perturbing 
$f_\star$ into $f_\star +\epsilon f$
on each side of \eqref{e:optimal}
and showing that for small $\epsilon$ the left-hand side of \eqref{e:suboptimal}
varies of linear order in $\epsilon$ while the right-hand side varies of quadratic order in $\epsilon$; compare with Theorem~\ref{thm:s_equals_one_conjecture} below. Therefore, by choosing the sign of small enough $\epsilon$ appropriately, one can create either strict inequality between both sides, therefore contradicting \eqref{e:suboptimal}.

We let $u=\mathcal Ef$ as in \eqref{eq:wave_solutions_family} with $n>0$. One can see that $f$ is orthogonal to $f_\star$ using orthogonality of the spherical harmonics 
$1$ and $Y_n$ and the isometries\footnote{Here we define $\|F\|_{H^s(\mathbb S^d)}^2=\int_{\mathbb S^d} \big|\big(\frac{(d-1)^2}4-\Delta_{\mathbb S^d}\big)^{s/2}F\big|^2\textup d S$.}
\begin{equation}\label{eq:hilbert_isometries}\notag
    \begin{split}
        \lVert f\rVert_{L^2(\sigma)}^2&=\lVert u(0,.)\rVert_{\dot{H}^{1/2}({\mathbb R}^d)}^2 + \lVert \partial_t u(0,.)\rVert_{\dot{H}^{-1/2}({\mathbb R}^d)}^2 \\
        &=\lVert Y_n\rVert_{H^{1/2}(\mathbb S^d)}^2 +\lVert Y_n\rVert_{H^{-1/2}(\mathbb S^d)}^2,
    \end{split}
\end{equation}
where the first identity follows from \eqref{eq:f_to_initial_data} and the second one is based on a computation with fractional integrals from conformal theory; see~\cite[eq.~(2)]{Morpurgo} and \eqref{eq:morpurgo} below.
The orthogonality between $f$ and $f_\star$ 
established the quadratic behaviour in $\epsilon$ of the right-hand side of \eqref{e:suboptimal}. 
To verify the linear behaviour in $\epsilon$ of the left-hand side, we choose $Y_n$ to be a real spherical harmonic and compute its first variation
(up to a nonzero scalar constant) as 
\begin{equation}\label{eq:derivative_lhs}
        \Re \int_{\mathbb R^{1+d}}\lvert \mathcal E f_\star\rvert^{p-2}\overline{\mathcal E f_\star} \mathcal E f .
        \end{equation}
With \eqref{eq:vol_element_pushforward} and \eqref{eq:wave_solutions_family}, we obtain
for \eqref{eq:derivative_lhs}
\begin{equation}\label{eq:derivative_lhs_continued}
    \begin{split}
        &2^{d+1}\int_{\mathbb D^{1+d}}\left| \cos(\tfrac{d-1}{2}T)\right|^{p-2}\cos(\tfrac{d-1}2 T) \cos(T(n+\tfrac{d-1}{2})) Y_n(X).
    \end{split}
\end{equation}
Obviously but crucially, $\frac{d-1}{2}$ is an integer if and only if $d$ is odd. If that is the case, then from the formula $Y_n(-X)=(-1)^n Y_n(X)$ we see that the integrand $U(T, X)$ of~\eqref{eq:derivative_lhs_continued} satisfies
\begin{equation}\label{eq:odd_symmetry}
    U(T+\pi, -X)=(-1)^\frac{d-1}{2}U(T, X), 
\end{equation}
Recalling the definition~\eqref{eq:penrose_diamond} of $\mathbb D^{1+d}$, it follows from this symmetry that
\begin{equation}\label{eq:integral_diamond}\notag
    \begin{split}
        &\int_{\mathbb D^{1+d}} U(T, \cos R, \omega \sin R ) =\frac{(-1)^\frac{d-1}2}{2}\int_{-\pi}^{\pi} \int_{\mathbb S^d} U(T,\cos R , \omega \sin R ) (\sin R)^{d-1}\textup d T \textup d R \textup d S(\omega)\\ 
        &=\frac{(-1)^{\tfrac{d-1}2}}{2}\int_{-\pi}^\pi \lvert \cos(\tfrac{d-1}{2}T)\rvert^{p-2}\cos(\tfrac{d-1}2 T) \cos(T(n+\tfrac{d-1}{2}))\, \textup d T \int_{\mathbb S^d} Y_n =0.
    \end{split}
\end{equation}
We used that the very last integral over $Y_n$ vanishes because $n>0$.
This suggests that $f_\star$ is a critical point. A refined version of this argument even proves that $f_\star$ is a local maximizer of $\lVert \mathcal E f\rVert_{L^p(\mathbb R^{1+d})}^p\lVert f\rVert_{L^2(\sigma)}^{-p}$; see~\cite[Theorem~1.1]{GN20}.

However, when $d$ is even, the symmetry~\eqref{eq:odd_symmetry} fails. In this case we can take an explicit spherical harmonic of degree $2$ and compute the variation~\eqref{eq:derivative_lhs}, which turns out to never vanish for every even $d$. We present a similar calculation in the next subsection. This proves that, in this case, $f_\star$ cannot be a maximizer.

Interestingly, Ramos~\cite{Ra12} proved that maximizers exist for $\mathcal E$ in arbitrary $d\ge 2$. So this is an instance where maximizers exist but they are not of the type suggested by the title of this survey.

\subsection{Related problems on one-sheeted cones}\label{s:1cone}
We consider now the one-sheeted cone 
\begin{equation}\label{eq:one-sheeted_cone}\notag
    \mathbb{K}^d_+:=\{ (\tau, \xi)\in \mathbb R\times \mathbb R^d\ :\ \tau=\lvert\xi\rvert\}.
\end{equation}
This is not a conic section in the strict sense discussed in the introduction, but it is a very
natural subset of the two-sheeted cone $\mathbb{K}^d$.

We further deviate from the strict setup  by considering  the more general family of measures supported on $\mathbb{K}^d_+$, 
\begin{equation}\label{eq:sigma_s}
    \begin{array}{cc}
        \sigma_s(\tau, \xi):=\frac{\delta(\tau-\lvert \xi \rvert)}{\lvert\xi\rvert^{2s}}, & s\in \left[ \frac12, \frac d2\right).
    \end{array}
\end{equation}
The measure $\sigma_+$ introduced in~\eqref{eqct:conic_measure_general} corresponds to the case $s=1/2$ of \eqref{eq:sigma_s}. 

By a standard interpolation of the aforementioned Strichartz estimates with Sobolev embeddings, as in~\cite{FVV12}, the Fourier extension operator $\mathcal{E}_s f:=\widehat{f\sigma_s}$ is bounded from $L^2(\sigma_s)$ into $L^{p_s}(\mathbb R^{1+d})$, where
\begin{equation}\label{eq:p_s}\notag
    p_s:=2\frac{d+1}{d-2s}.
\end{equation}
To compare with the notation of~\cite{BJOS17, FVV12}, note that
$$\mathcal E_s f(-t, -x)=(2\pi)^d e^{it\sqrt{-\Delta}}u_s(x)
=\int_{\mathbb R^d} e^{it\lvert\xi\rvert+ix\cdot \xi} \widehat{u}_s(\xi)\,\textup d\xi,$$
$$
\widehat{u}_s(\xi):=\frac{f(\lvert \xi\rvert, \xi)}{\lvert\xi\rvert^{2s}}. $$

In~\cite{BJOS17}, it is conjectured that 
\begin{equation}\label{eq:f_s_star}\notag
    f_{\star}^{(s)}(\tau, \xi):=\lvert\tau\rvert^{2s-1}e^{-\lvert \tau\rvert} 
\end{equation}
is a maximizer of 
\begin{equation}\label{eq:quotient_one_sheet} 
    \lVert \mathcal{E}_s f\rVert_{L^{p_s}(\mathbb R^{1+d})}^{p_s}\lVert f\rVert_{L^2(\sigma_s)}^{-p_s}
\end{equation}
if and only if
\begin{equation}\label{eq:conjectured_nec_suff_conds}\notag
    s\in\left\{\frac12, \frac{d-1}4\right\}.
\end{equation}
Note that $s=(d-1)/4$ is equivalent to $p_s=4$. 
This conjecture is open, but some partial results are available.

In the case $s=1/2$, Foschi~\cite{Fo07} proved that $f_\star^{(1/2)}$ is a maximizer for $d\in \{2, 3\}$. In the same case, for all $d\ge 2$, in~\cite{GN20} it is shown that $f_\star^{(1/2)}$ is a local maximizer. This is analogous to the situation for odd $d$ on the two-sheeted cone. However, in the present case of the one-sheeted cone there is no distinction between odd and even $d$.
We summarize the status of the $s=1/2$ case of this conjecture in Table~\ref{table:cones}, also for the case of the two-sheeted cone $\mathbb K^d$.

In the case $s=(d-1)/4$, in~\cite{BR13} it is proved that $f_\star^{(s)}$ is a maximizer for $d=5$. In the same case, for $d\ge 2$, in~\cite{BJO16} it is shown that $f_\star^{(s)}$ is a maximizer among all radially symmetric functions. We note that the $d=5$ case of the two-sheeted cone has also been treated  in~\cite{BR13}, and further extended to a stability inequality in~\cite{Ne22}.

Finally, in the case $s\notin \{1/2, (d-1)/4\}$, in~\cite{BJOS17} it is proved that $f_\star$ is not a maximizer provided that $p_s$ is an even integer. 

\begin{table}
    \centering
        \begin{tabular}{c|c|c}
            Spatial dim.~$d$ & $\mathbb{K}^{d}$ & $\mathbb{K}^{d}_+$ \\ 
            \toprule
            2 & NO & YES \\ 
            3 & YES & YES \\ 
            4, 6, 8, \ldots & NO & Local \\ 
            5, 7, 9, \ldots & Local & Local 
        \end{tabular}
        \caption{\footnotesize Is $e^{-|\tau|}=f_\star^{(1/2)}(\tau,\xi)$ a maximizer of the Fourier extension on $\mathbb K^d$ and $\mathbb K^d_+$ when $s=1/2$?}
        \label{table:cones}
\end{table}

We conclude with an original result, showing how the Penrose transform can be applied to settle the $s=1$ case of the previous conjecture. This extends  ~\cite[Theorem 2.1]{BJOS17}  because we do not require $p_1$ to be even.
\begin{theorem}\label{thm:s_equals_one_conjecture}
    Let $d\ge 3,  d\ne 5$. There exists $f\in L^2(\sigma_1)$ such that 
    \begin{equation}\label{eq:s_equals_one_conclusion_one}
            \lVert f_\star^{(1)} + \epsilon f\rVert_{L^2(\sigma_1)} - \lVert f_\star^{(1)}\rVert_{L^2(\sigma_1)} =O(\epsilon^2), 
    \end{equation} 
    while, for some $C\ne 0$,
    \begin{equation}\label{eq:s_equals_one_conclusion_two}
        \lVert \mathcal E_1 (f_\star^{(1)} + \epsilon f)\rVert_{L^{2\frac{d+1}{d-2}}(\mathbb R^{1+d})} - \lVert \mathcal E_1 f_\star^{(1)}\rVert_{L^{2\frac{d+1}{d-2}}(\mathbb R^{1+d})}=C\epsilon + o(\epsilon).
    \end{equation}
\end{theorem}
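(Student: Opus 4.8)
The plan is to mimic exactly the perturbative argument used for the two-sheeted cone in even dimension, but transplanted to the one-sheeted cone $\mathbb K^d_+$ with the weighted measure $\sigma_1$, i.e.\ at the exponent $p_1=2\frac{d+1}{d-2}$. First I would recall that, for $s=1$, the relevant Hilbert isometry sends $\|f\|_{L^2(\sigma_1)}$ to $\|\widehat u_1\|$ in a fractional Sobolev space of the Cauchy data; the conjectured maximizer $f_\star^{(1)}(\tau,\xi)=|\tau|e^{-|\tau|}$ corresponds, via the analogue of \eqref{eq:f_to_initial_data}, to the constant spherical harmonic $Y_0$ in the Penrose picture, so $u_\star^{(1)}=\mathcal E_1 f_\star^{(1)}$ admits a closed form of the type $\left[(1+(t+r)^2)(1+(t-r)^2)\right]^{\frac{1-d}{4}}\cos\!\big(\tfrac{d-1}{2}T\big)$, up to a harmless constant, after pulling back solutions of the conformal wave equation on $\mathbb D^{1+d}$ as in \eqref{eq:wave_solutions_family}. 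The perturbation $f$ will be chosen so that $u:=\mathcal E_1 f$ is the pull-back associated to a nonconstant real spherical harmonic $Y_n$ on $\mathbb S^d$, with $n\ge 1$.

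The second step is \eqref{eq:s_equals_one_conclusion_one}: one must check that this $f$ is orthogonal to $f_\star^{(1)}$ in $L^2(\sigma_1)$, which forces the $L^2(\sigma_1)$-norm to vary only quadratically in $\epsilon$. This orthogonality follows from the orthogonality of $Y_0$ and $Y_n$ on $\mathbb S^d$ together with the identification, for $s=1$, of $\|\cdot\|_{L^2(\sigma_1)}^2$ with a sum of homogeneous Sobolev norms of the initial data, each of which is diagonalized by the spherical-harmonic decomposition; this is the $s=1$ instance of the conformal computation cited around \eqref{eq:hilbert_isometries} (Morpurgo's fractional-integral identity). The third step is \eqref{eq:s_equals_one_conclusion_two}: the first variation of the $L^{p_1}$-norm of $\mathcal E_1 f_\star^{(1)}$ in the direction $f$ is, up to a nonzero scalar, $\Re\int_{\mathbb R^{1+d}}|\mathcal E_1 f_\star^{(1)}|^{p_1-2}\overline{\mathcal E_1 f_\star^{(1)}}\,\mathcal E_1 f$, and using the conformal change of variables \eqref{eq:vol_element_pushforward} together with \eqref{eq:wave_solutions_family} this becomes, up to a positive constant, an integral over the Penrose diamond $\mathbb D^{1+d}$ of
\[
\big|\cos(\tfrac{d-1}{2}T)\big|^{p_1-2}\cos(\tfrac{d-1}{2}T)\,\cos\!\big(T(n+\tfrac{d-1}{2})\big)\,Y_n(X).
\]
One then exploits the symmetry/asymmetry of $\mathbb D^{1+d}$ under $(T,X)\mapsto(T+\pi,-X)$ exactly as in \eqref{eq:odd_symmetry}--\eqref{eq:integral_diamond}: if $\tfrac{d-1}{2}$ were an integer this integral would collapse to $\big(\int_{-\pi}^\pi\cdots\,dT\big)\big(\int_{\mathbb S^d}Y_n\big)=0$, but for $d\ge 3$ with $d\ne 5$ the point is that one can pick a specific low-degree $Y_n$ — a degree-$2$ spherical harmonic works, paralleling the even-$d$ discussion in the previous subsection — for which the diamond integral is explicitly nonzero. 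This nonvanishing is what produces the constant $C\ne 0$ and the genuinely linear term in \eqref{eq:s_equals_one_conclusion_two}.

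The main obstacle is precisely the last computation: showing that, for the concrete choice of $Y_n$, the integral over $\mathbb D^{1+d}$ does not vanish for all $d\ge 3$, $d\ne 5$. Because $p_1=2\frac{d+1}{d-2}$ is generally not an even integer, one cannot expand $|\cos(\tfrac{d-1}{2}T)|^{p_1-2}$ into a finite Fourier series, so the $T$-integral is a Beta-type integral and the $R$-integral over the restricted range $R<\pi-|T|$ couples the two variables; the value has to be written in terms of Gamma functions (or hypergeometric values) and shown to be nonzero, with the excluded case $d=5$ accounting for $p_1=4$, where the argument of \cite{BJOS17} already applies by different means and the degree-$2$ perturbation does not detect the relevant obstruction in the same way (this is the analogue of the $s=(d-1)/4$ endpoint). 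Once the sign/nonvanishing of that single explicit integral is settled — most cleanly by reducing everything to the sphere via \eqref{eq:vol_element_pushforward}, integrating out $\omega$ against $Y_n$ to leave a one- or two-dimensional integral, and evaluating it with standard Gamma-function identities — the two displayed asymptotics follow, and by choosing the sign of $\epsilon$ one contradicts the would-be sharp inequality, showing $f_\star^{(1)}$ is not a maximizer in these dimensions.
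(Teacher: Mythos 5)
There is a genuine gap, and it starts at the very first structural step: on the one-sheeted cone the conjectured maximizer's extension is \emph{complex}, $u_\star=\mathcal E_1 f_\star^{(1)}=C_d\left[(1+(t+r)^2)(1+(t-r)^2)\right]^{\frac{1-d}{4}}e^{-iT\frac{d-1}{2}}$, not the cosine profile you wrote down (the cosine arises only for the two-sheeted cone, where one may take real parts). Consequently $\lvert u_\star\rvert^{p_1-2}\overline{u_\star}\,\mathcal E_1 f$ contains \emph{no} oscillating factor $\lvert\cos(\tfrac{d-1}{2}T)\rvert^{p_1-2}\cos(\tfrac{d-1}{2}T)$: the $T$-oscillation is just $\cos(nT)$ coming from $\overline{u_\star}u\sim e^{-inT}$. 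Moreover, since $p_1=2\tfrac{d+1}{d-2}\neq 2\tfrac{d+1}{d-1}$, the conformal factors no longer cancel against the Jacobian \eqref{eq:vol_element_pushforward}, and the correct integrand over the Penrose diamond carries the weight $\lvert \cos T+X_0\rvert^{\frac{d+1}{d-2}}$ (with $X_0=\cos R$), as in \eqref{eq:first_variation_one_cone}. Your proposed integrand omits this weight and keeps the wrong oscillation, and the parity dichotomy you then import from the two-sheeted case is fatal to the theorem you are trying to prove: by your own symmetry argument the first variation would vanish whenever $\tfrac{d-1}{2}\in\mathbb Z$, i.e.\ for all odd $d$, whereas the statement asserts a nonzero linear term for every $d\ge 3$, $d\neq 5$, including $d=3,7,9,\dots$. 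So the approach as designed cannot yield \eqref{eq:s_equals_one_conclusion_two} in odd dimensions, and your heuristic for excluding only $d=5$ does not come out of it.

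The actual mechanism is different. With the correct integrand, $U(T,X_0)=\lvert\cos T+X_0\rvert^{\frac{d+1}{d-2}}\cos(2T)\,Y_2(X_0)(1-X_0^2)^{\frac{d-2}{2}}$ is invariant under $(T,X_0)\mapsto(T+\pi,-X_0)$ for \emph{every} $d$, so the diamond integral equals half the integral over the full rectangle $[-\pi,\pi]\times[-1,1]$, decoupling the variables; no Gamma/hypergeometric evaluation over the coupled region is needed. Nonvanishing is then shown by integrating by parts in $X_0$ to get $h_d(\cos T)=\int_{-1}^1\lvert\cos T+X_0\rvert^{\frac{5-d}{d-2}}(1-X_0^2)^{2+\frac{d-2}{2}}\,\textup d X_0$ and proving $h_d$ is strictly monotone on $(0,1)$ (increasing for $d\in\{3,4\}$, decreasing for $d\ge 6$), after which $\int_0^{\pi/2}\cos(2T)h_d(\cos T)\,\textup d T=\tfrac12\int_0^{\pi/2}\sin(2T)\sin(T)h_d'(\cos T)\,\textup d T$ has a definite sign; $d=5$ is excluded precisely because the exponent $\tfrac{5-d}{d-2}$ vanishes there, making $h_d$ constant and the $T$-integral zero. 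Finally, you also skipped a necessary verification: the Penrose-family solution \eqref{eq:wave_solutions_family} must be checked to satisfy the one-sheeted constraint $\partial_t u(0,\cdot)=-i\sqrt{-\Delta}\,u(0,\cdot)$ (so that it really is $\mathcal E_1 f$ for some $f$ supported on $\mathbb K^d_+$); this uses Morpurgo's conformal formula \eqref{eq:morpurgo} together with $-\Delta_{\mathbb S^d}Y_n=n(n+d-1)Y_n$, and is not automatic.
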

As before, it follows from Theorem \ref{thm:s_equals_one_conjecture} that $f_\star^{(1)}$ cannot be a maximizer of~\eqref{eq:quotient_one_sheet} for $s=1$. 
\begin{proof}[Proof of Theorem~\ref{thm:s_equals_one_conjecture}]
    As in the previous case of the two-sheeted cone,  $u(t, x)=\mathcal E_1 f(t, x)$ satisfies the wave equation~\eqref{e:wave}. The initial data $u(0,x)$, $\partial_t u(0,x)$ are related by the support assumption on $f$ on the one-sheeted cone and read as follows:
    \begin{equation}\label{eq:one_sheeted_initial_data}
            \widehat{u}(0, \xi)=\frac{ f(\lvert \xi\rvert, \xi)}{\lvert\xi\rvert^{2s}},  
    \end{equation}
    \begin{equation}\label{eq:one_sheeted_initial_data_two}
        \partial_t \widehat{u}(0, \xi)=-i\lvert\xi\rvert\widehat{u}(0, \xi),
    \end{equation}
    where we used the spatial Fourier transform $\widehat{v}(t,\xi)=\int_{\mathbb R^d} v(t,x)e^{-ix\cdot \xi}\, \textup d x$. 
    
    As in the previous subsection, we consider $u(t,x)$ given by \eqref{eq:wave_solutions_family}, which has already been seen to solve the wave equation. We claim that it also satisfies condition~\eqref{eq:one_sheeted_initial_data_two}. 
    Indeed, we see by direct computation that 
    \begin{equation}\label{eq:one_cone_family_initial_data}\notag
        \begin{array}{cc}
            u(0, x)=(1+r^2)^{\frac{1-d}{2}}Y_n(X), & \partial_t u(0, x)=-2i(n+\frac{d-1}{2})(1+r^2)^{-\frac{1+d}{2}} Y_n(X),
        \end{array}
    \end{equation}
    where $X=(\cos R, \omega \sin R)$. 
    Now notice that the claimed condition can be equivalently written as 
    \begin{equation}\label{eq:constraint_equivalent}
        \partial_t u(0, \cdot)=-i\sqrt{-\Delta}u(0, \cdot).
    \end{equation}
    By the aforementioned conformal formula~\cite[eq.~(2)]{Morpurgo},
    \begin{equation}\label{eq:morpurgo}
        \sqrt{-\Delta} u(0, x) = 2(1+r^2)^{-\frac{1+d}{2}}\left(\frac{(d-1)^2}{4}-\Delta_{\mathbb S^d}\right)^{\frac12} Y_n (X).
    \end{equation}
    Recalling  $-\Delta_{\mathbb S^d}Y_n=n(n+d-1)Y_n$, which implies 
    \begin{equation*}
        \left(\frac{(d-1)^2}{4}-\Delta_{\mathbb S^d}\right)^{\frac12} Y_n =\left( n + \frac{d-1}{2}\right) Y_n,
    \end{equation*}
    condition~\eqref{eq:constraint_equivalent} then follows immediately.
    
    Now we notice that the conjectured maximizer $u_\star:=\mathcal E_1 f_\star^{(1)}$ is such that
    \begin{equation*}
            \widehat{u}_\star(0, \xi)=\frac{ e^{-\lvert\xi\rvert}}{\lvert\xi\rvert}, 
    \end{equation*}
    so the same computation as in~\eqref{eq:Foschian_initial_data} yields 
    \begin{equation}\label{eq:one_sheeted_conj_maximizer_initial_data}\notag
            u_\star(0, x)=C_d(1+\lvert x \rvert^2)^\frac{1-d}{2}, 
    \end{equation}
    for some $C_d>0$. By uniqueness of solutions to the wave equation we conclude 
    \begin{equation}\label{eq:u_star_one_sheeted}\notag
        u_\star(t, x)=C_d[(1+(t+r)^2)(1+(t-r)^2)]^{\frac{1-d}{4}}e^{-iT\frac{d-1}{2}}.
    \end{equation}

    Now consider $u$ with $n=2$, where we choose $Y_2$ in~\eqref{eq:wave_solutions_family} to be
    \begin{equation}\label{eq:rodrigues}\notag
        Y_2(X_0, \ldots, X_d)=(1-X_0^2)^\frac{2-d}{2}\frac{\textup d^2}{\textup d X_0^2}\left[ (1-X_0)^{2+\frac{d-2}{2}}\right].
    \end{equation}
    By  Rodrigues formula, this is indeed a spherical harmonic of degree $2$. We let $f$ be the corresponding function on the one-sheeted cone defined by the initial condition~\eqref{eq:one_sheeted_initial_data}. The same argument as in the previous subsection shows that this $f$ just constructed is orthogonal to $f_\star^{(1)}$, from which the quadratic variation~\eqref{eq:s_equals_one_conclusion_one} immediately follows.
    
    On the other hand, the left-hand side of~\eqref{eq:s_equals_one_conclusion_two} reads to first order in $\epsilon$  
    \begin{equation}\notag
        \epsilon\Re \int_{\mathbb R^{1+d}} \lvert u_\star\rvert^{2\frac{d+1}{d-2}-2}\overline{u_\star} u\, \textup d t \textup d x
    \end{equation}
    up to a nonzero scalar constant. Changing variables via $\mathcal P$, this is seen to equal
    \begin{equation}\label{eq:first_variation_one_cone}
    \epsilon\Re\int_{\mathbb D^{1+d}}\lvert \cos T+X_0\rvert^{\frac{d+1}{d-2}}e^{i2T}Y_2(X)(1-X_0^2)^{\frac{d-2}{2}} \textup d T \textup d X_0\textup d S(\omega).
    \end{equation}
    Since $Y_2$ depends on $X$ only via $X_0$, the integral in $\textup d S(\omega)$ is constant. The integrand $U(T, X_0)$  clearly satisfies $U(T+\pi, -X_0)=U(T, X_0)$. So, arguing as in the previous subsection, we conclude that~\eqref{eq:first_variation_one_cone} equals
    \begin{equation}\label{eq:right_hand_integral}
        \frac C2\int_{-\pi}^\pi\int_{-1}^1 \lvert \cos T + X_0\rvert^\frac{d+1}{d-2}\cos(2T)\frac{\textup d^2}{\textup d X_0^2}\left[(1-X_0^2)^{2+\frac{d-2}{2}}\right]\, \textup d X_0 \textup d T.
    \end{equation}

    It remains to prove that this integral does not vanish. To do so, let 
    \begin{equation*}
        h_d(\cos T):=\int_{-1}^1\lvert \cos T+X_0\rvert^{\frac{d+1}{d-2}}\frac{\textup d^2}{\textup d X_0^2}\left[(1-X_0^2)^{2+\frac{d-2}{2}}\right]\, \textup d X_0.
    \end{equation*}
    By partial integration, we see that 
    \begin{equation}\label{eq:h_d_function}
        h_d(\cos T)=\int_{-1}^1\lvert \cos T+X_0\rvert^{\frac{5-d}{d-2}} (1-X_0^2)^{2+\frac{d-2}{2}}\, \textup d X_0,
    \end{equation}
    up to a positive scalar constant. 
    
    We pause to compare this with the computation needed in the last paragraph of the previous subsection. There we did not have the weight $\lvert \cos T + X_0\rvert^\frac{d+1}{d-2}$, but we did not have the symmetry $U(T+\pi, -X_0)=U(T, X_0)$ either. 
    Therefore the partial integration would not result in a term like \eqref{eq:h_d_function} but instead yield some boundary terms
    due to the more complicated region of integration.

    Note that $h_d$ is a constant function for $d=5$, which is the only case left out of the statement of Theorem~\ref{thm:s_equals_one_conjecture}. In this case,~\eqref{eq:right_hand_integral} is readily seen to vanish, as it reduces to a constant multiple of the integral of $\cos(2T)$ over a full period.
    
    We claim that $h_d=h_d(y)$ defines a {\it strictly increasing} function of $y\in(0,1)$ if $d\in\{3,4\}$, and a {\it strictly decreasing} function of $y\in(0,1)$ if $d\geq 6$. 
    Define variables $x:=X_0$, $t:=x+y$. This yields 
    \[h_d(y)=\int_{y-1}^{y+1} {(1-(t-y)^2)^{\frac{d+2}2}}{|t|^\frac{5-d}{d-2}} \,\textup d t .\]
    Differentiating under the integral sign and using the fact that the integrand vanishes at the boundary,
    \[\frac {h_d'(y)}{d+2}= \int_{y-1}^{y+1} {(t-y)(1-(t-y)^2)^{\frac d2}}{|t|^\frac{5-d}{d-2}}\,\textup d t=\int_{-1}^1{x(1-x^2)^{\frac  d2}}{|y+x|^{\frac{5-d}{d-2}}}\,\textup d x.\]
    Breaking up the region of integration into two, and in one of them changing $x$ to $-x$  reveals that the last display equals
    \[ \int_0^1 x(1-x^2)^{\frac d2}\left({|y+x|^{\frac{5-d}{d-2}}}-{|y-x|^{\frac{5-d}{d-2}}}\right)\, \textup d x.\]
    Since $|y+x|>|y-x|> 0$ for every $(y,x)\in(0,1)^2$, the bracketed term is negative for $d> 5$ and positive for $d\in\{3, 4\}$. This proves the claim.

    We can finally conclude that~\eqref{eq:right_hand_integral} is nonzero. Indeed, the integrand in~\eqref{eq:right_hand_integral} satisfies $U(-T,X)=U(T, X)=U(\pi-T, -X)$, hence integration in $T$ can be taken over the interval $[0, \pi/2]$.
    So~\eqref{eq:right_hand_integral} equals, with partial integration,
    \begin{equation*}
        \int_0^{\pi/2}\cos(2T)h_d(\cos T)\, dT= 
                \frac12\int_0^{\pi/2}\sin(2T)\sin(T)h_d'(\cos T)\, \textup d T,
    \end{equation*}
    and the right-hand integral has a definite sign. 
    This concludes the proof of Theorem~\ref{thm:s_equals_one_conjecture}.
\end{proof}

\section{Hyperboloids} 
\label{s:hyperbolic}
Hyperboloids locally look like spheres, with largest curvature at the origin, and globally resemble cones.
As such, sharp restriction theory on hyperboloids shares features from both spheres and cones, and serves as a natural bridge between  Sections \ref{s:circle}--\ref{s:agmonh} and  \ref{s:cones}.
On the other hand,  genuinely new phenomena emerge, as we shall see.
Consider the upper sheet of the two-sheeted hyperboloid
\[\mathbb H^d=\{(\tau,\xi)\in\R\times\R^d: \tau=\langle\xi\rangle\},\]
where $\langle\xi\rangle:=\sqrt{1+|\xi|^2}$, 
corresponding to the case $(\alpha,\beta,\gamma)=(0,1,1)$ of \eqref{e:plane},
and equip it with the Lorentz invariant measure
\begin{equation}\label{eq:LorentzInv}
\textup d \sigma(\tau,\xi)= \delta(\tau-\langle\xi\rangle)\frac{\textup d \tau\textup d \xi}{\langle\xi\rangle}
\end{equation}
corresponding to \eqref{eq:sigma_measure_intro}.
The extension operator on $\mathbb H^d$ is given by
\[\mathcal Ef(t,x)=\int_{\R^d} e^{i(t,x)\cdot(\langle\xi\rangle,\xi)} f(\xi)\frac{\textup d \xi}{\langle\xi\rangle},\]
and since the 1977 work of Strichartz \cite{St77} it is known that
\begin{equation}\label{eq:StriHyp}
\|\mathcal Ef\|_{L^p(\R^{1+d})}\leq{\bf H}_{d,p}\|f\|_{L^2(\mathbb H^d)}
\end{equation}
as long as\footnote{With the caveat that the endpoint case $p=\infty$ has to be excluded when $d=1$.}
\begin{equation}\label{eq:ExpRangeHyp}
2+\frac4d\leq p\leq2+\frac4{d-1} \text{ if } d\geq 1.
\end{equation}
Note that the endpoints in the latter range correspond to the endpoint exponents for the sphere and cone, respectively; recall Figure \ref{fig:PDP}.
 ${\bf H}_{d,p}$ denotes the optimal constant in inequality \eqref{eq:StriHyp} and, according to \eqref{eq:LorentzInv}, 
$\|f\|^2_{L^2(\mathbb H^d)}=\,\,\int_{\mathbb R^d}|f(\xi)|^2\frac{\textup d \xi}{\langle\xi\rangle}.$
The extension operator on $\mathbb H^d$ naturally relates to the Klein--Gordon equation, $\partial_t^2u=\Delta u-u$.
This connection comes via the Klein--Gordon propagator, 
\[e^{it\sqrt{1-\Delta}}g(x)=\frac1{(2\pi)^d} \int_{\R^d} e^{i(t,x)\cdot(\langle\xi\rangle,\xi)} \widehat{g}(\xi)\textup d \xi,\]
together with the observation that $\mathcal E f(t,x)= (2\pi)^d e^{it\sqrt{1-\Delta}}g(x)$ as long as $\widehat{g}(\xi)=\langle\xi\rangle^{-1}\widehat{f}(\xi)$. 
This relation implies that estimate \eqref{eq:StriHyp} can be equivalently rewritten as 
\[\|e^{it\sqrt{1-\Delta}}g\|_{L^p_{t,x}(\R\times\R^d)}\leq (2\pi)^{-d}{\bf H}_{d,p}\|g\|_{H^{1/2}(\mathbb R^d)}
\]
where $\|\cdot\|_{H^{1/2}(\mathbb R^d)}$ denotes the usual nonhomogeneous Sobolev norm.

Quilodrán \cite{Q15} investigated some sharp instances of inequality \eqref{eq:StriHyp}, 
all corresponding to algebraic endpoints of the range \eqref{eq:ExpRangeHyp}, and proved that
\[{\bf H}_{2,4}=2^{3/4}\pi,\,\, {\bf H}_{2,6}=(2\pi)^{5/2},\,\, {\bf H}_{3,4}=(2\pi)^{5/4},\]
even though maximizers do not exist.
He also asked about the value of ${\bf H}_{1,6}$, which corresponds to the last algebraic endpoint question, and whether maximizers exist in the non-endpoint case in all dimensions.\footnote{The methods of \cite{FVV12} do not apply directly due to the lack of exact scaling invariance.}
Both of these questions were recently answered in \cite{COSS19a, COSSS18}.
\begin{theorem}[\cite{COSS19a}]\label{thm:Hyp1}
    ${\bf H}_{1,6}=3^{-1/12}(2\pi)^{1/2}$, and maximizers for \eqref{eq:StriHyp} do not exist if $(d,p)=(1,6)$.
\end{theorem}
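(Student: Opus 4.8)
The plan is to exploit the even exponent to turn the problem into a convolution inequality and then to show that the extremal behaviour is dictated by the flat paraboloid tangent to $\mathbb H^1$ at its vertex. Since $p=6$ is an even integer, Plancherel's identity gives
\[
\|\mathcal E f\|_{L^6(\mathbb R^2)}^{6}=(2\pi)^{2}\,\|(f\sigma)\ast(f\sigma)\ast(f\sigma)\|_{L^2(\mathbb R^2)}^{2},
\]
so that computing ${\bf H}_{1,6}$ amounts to finding the best constant $C$ in the trilinear inequality $\|(f\sigma)^{\ast 3}\|_{L^2(\mathbb R^2)}\le C\,\|f\|_{L^2(\mathbb H^1)}^{3}$ and deciding whether it is attained. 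The structural fact that drives everything is that the Lorentzian mass $m(\zeta)=(\tau^{2}-|\xi|^{2})^{1/2}$ of a sum $\xi_{1}+\xi_{2}+\xi_{3}$ of future-directed points of $\mathbb H^{1}$ satisfies $m\ge 3$, with equality exactly when the three summands coincide; by the Lorentz invariance of $\sigma$ and of the $L^{6}$ norm we may place that common point at the vertex $(1,0)$, near which $\mathbb H^{1}$ osculates the paraboloid $\{(1+|\xi|^{2}/2,\xi):\xi\in\mathbb R\}$ to second order and the weight $\langle\xi\rangle^{-1}$ tends to $1$. Hence, after parabolic rescaling, the extremal problem for the trilinear form should degenerate into the $L^{2}\to L^{6}$ adjoint restriction inequality for that paraboloid, i.e.\ into the one-dimensional Schr\"odinger--Strichartz inequality, whose sharp constant was determined by Foschi \cite{Fo07} and by Hundertmark--Zharnitsky \cite{HZ06}, with Gaussian maximisers; in the present normalisation its value is exactly $3^{-1/12}(2\pi)^{1/2}$.

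The lower bound ${\bf H}_{1,6}\ge 3^{-1/12}(2\pi)^{1/2}$ is the easy half: one feeds into the quotient a sequence $f_{n}$ supported in shrinking neighbourhoods of the vertex and normalised parabolically, so that $\mathcal E f_{n}$ converges, after the associated dilation of space--time, to the free Schr\"odinger evolution of the Gaussian maximiser; keeping track of the $(2\pi)$ factors and of the Jacobian $\langle\xi\rangle^{-1}\to 1$ reproduces the claimed value in the limit.

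The matching upper bound ${\bf H}_{1,6}\le 3^{-1/12}(2\pi)^{1/2}$ is the heart of the argument. A crude pointwise Cauchy--Schwarz, $|(f\sigma)^{\ast 3}(\zeta)|^{2}\le \sigma^{\ast 3}(\zeta)\,(|f|^{2}\sigma)^{\ast 3}(\zeta)$ followed by extraction of $\|\sigma^{\ast 3}\|_{L^{\infty}}$, is not sharp, since equality there would force $f$ to be constant on the convolution fibres, which is incompatible with $f\in L^{2}(\sigma)$; the comparison must therefore retain the full convolution weight, exactly as in Foschi's treatment of the paraboloid. The plan is to use the scale invariance of the paraboloid to run a monotonicity argument directly on the quotient: using the elementary inequality $\langle\xi\rangle\le 1+|\xi|^{2}/2$ (with equality only at $\xi=0$), one shows that the hyperboloid quotient is dominated by, and converges to, the corresponding quotient for the osculating paraboloid under parabolic zooming at the vertex, and then invokes Foschi's sharp paraboloid inequality. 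Combined with the lower bound, this pins down ${\bf H}_{1,6}=3^{-1/12}(2\pi)^{1/2}$.

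Non-existence of maximisers then follows by examining when the comparison of the previous paragraph is an equality: this would force $f\sigma$ to be concentrated at the single point $(1,0)$ of $\mathbb H^{1}$, because $\langle\xi\rangle^{-1}<1$ off the vertex and because any nonzero $f\in L^{2}(\sigma)$ necessarily spreads over a set of positive $\sigma$-measure on which the relevant monotone convolution profile lies strictly below its supremum; no such $f$ exists. Hence $\|\mathcal E f\|_{L^{6}}<{\bf H}_{1,6}\,\|f\|_{L^{2}(\mathbb H^{1})}$ for every $f\ne 0$, so the supremum defining ${\bf H}_{1,6}$ is not attained. The main obstacle, as indicated, is that monotonicity/domination step in the upper bound: it requires a comparison delicate enough to be saturated only in the limit, hence a careful foliation of $\mathbb R^{2}$ into Lorentz orbits, quantitative control of the convolution profiles along them, and a rigidity analysis of the equality cases; once those are in place, together with the classical evaluation of the one-dimensional Schr\"odinger--Strichartz constant, the remaining steps are routine.
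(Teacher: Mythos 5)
Your reduction to the trilinear convolution inequality and your identification of the vertex concentration (and hence non-existence of maximizers) are in the right spirit, but the heart of the argument --- the upper bound --- has a genuine gap, and it stems from dismissing the very step that actually works. You reject the ``crude'' pointwise Cauchy--Schwarz $|(f\sigma)^{\ast 3}(\zeta)|^2\le \sigma^{\ast 3}(\zeta)\,(|f|^2\sigma)^{\ast 3}(\zeta)$ followed by extraction of $\|\sigma^{\ast 3}\|_{L^\infty}$ on the grounds that equality would force $f$ to be constant on convolution fibres and hence cannot hold for $f\in L^2(\sigma)$. That reasoning conflates attainment with sharpness: a constant can be optimal without being attained. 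This is precisely the situation here, and it is the paper's route (\cite{COSS19a}): one computes, using Lorentz invariance to reduce to the axis $\xi=0$, that $\sigma\ast\sigma\ast\sigma$ is continuous up to the boundary of its support with $\sup_{\tau>3}(\sigma\ast\sigma\ast\sigma)(\tau,0)=(\sigma\ast\sigma\ast\sigma)(3,0)=2\pi/\sqrt{3}$, a strict global maximum attained \emph{at the boundary}. The Cauchy--Schwarz bound then gives ${\bf H}_{1,6}\le 3^{-1/12}(2\pi)^{1/2}$, and the sequence $f_n=\exp(-n\langle\cdot\rangle)$, which concentrates at the vertex, asymptotically saturates both the Cauchy--Schwarz step and the sup extraction exactly because the maximum of $\sigma^{\ast3}$ sits on the boundary of the support; this simultaneously yields the matching lower bound and the non-existence of maximizers.

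Your proposed replacement --- a global domination of the hyperboloid quotient by the quotient for the osculating paraboloid via $\langle\xi\rangle\le 1+|\xi|^2/2$ and ``parabolic zooming'' --- is exactly the step you do not carry out, and it is far from routine: the two extension problems live on different surfaces with different phases and different convolution supports, so neither the numerators nor the full quotients admit an obvious pointwise or monotone comparison, and the weight inequality you invoke only controls the denominators. Concentration at the vertex shows the paraboloid constant is a \emph{limit} of the hyperboloid functional along special sequences (good for the lower bound), but it does not give an upper bound valid for all $f$. To close the argument you would need either to prove such a comparison principle rigorously (which is delicate and is not what \cite{COSS19a} does) or to revert to the convolution computation: an explicit evaluation of $\sigma\ast\sigma\ast\sigma$, the identification of its supremum $2\pi/\sqrt{3}$ at the boundary point $(3,0)$, and the strictness of that maximum, which is also what drives the rigidity/non-existence conclusion you sketch at the end.
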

\begin{theorem}[\cite{COSS19a, COSSS18}]\label{thm:Hyp2}
Maximizers for \eqref{eq:StriHyp} exist when $6< p<\infty$ if $d=1$ and 
$2+\frac4d< p<2+\frac4{d-1} \text{ if } d\geq 2.$
In fact, given any maximizing sequence $\{f_n\}$, there exist symmetries $S_n$ such that $\{S_nf_n\}$ converges in $L^2(\mathbb H^d)$ to a maximizer $f$, after passing to a subsequence.
\end{theorem}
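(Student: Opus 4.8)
The plan is to prove a compactness-modulo-symmetry statement for maximizing sequences by combining a linear profile decomposition with bilinear restriction estimates. Normalize a maximizing sequence so that $\|f_n\|_{L^2(\mathbb H^d)}=1$ and $\|\mathcal Ef_n\|_{L^p(\R^{1+d})}\to{\bf H}_{d,p}$. The relevant symmetry group of the functional $f\mapsto\|\mathcal Ef\|_{L^p(\R^{1+d})}^p\|f\|_{L^2(\mathbb H^d)}^{-p}$ is the Poincar\'e group, acting on $\mathbb H^d$ by Lorentz boosts and on the extension $\mathcal Ef$ by spacetime translations; crucially there is no parabolic dilation, so the scaling argument of \cite{FVV12} for paraboloids does not transfer, and one must instead identify and dispose of the two genuine non-compactness mechanisms: concentration of the mass of $f_n$ onto a point of $\mathbb H^d$, near which the surface is modeled by a paraboloid, and concentration onto a segment of a null geodesic, near which it is modeled by the cone $\mathbb K^d$ (equivalently, a round cap of mass drifting to spatial infinity under boosts, which gets sheared onto a null direction). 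A direct scaling computation shows that mass carried in a cap of radius $\lambda\to0$ produces an extension with $\|\mathcal Ef\|_p=O(\lambda^{d/2-(d+2)/p})$ per unit $L^2$-mass, which is $o(1)$ precisely because $p>2+\tfrac4d$; a parallel computation shows the conic degeneration is subcritical precisely because $p<2+\tfrac4{d-1}=2\tfrac{d+1}{d-1}$ lies strictly below the cone's own critical exponent. This is exactly why the endpoints of \eqref{eq:ExpRangeHyp} must be excluded: there one of these degenerations becomes scale-invariant, the sharp constant is approached only along a concentrating paraboloidal Gaussian or along a cone extremizer escaping to infinity, and no genuine maximizer exists, consistent with Theorem~\ref{thm:Hyp1}.

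The analytic core is a refined Strichartz estimate on $\mathbb H^d$, which prevents a maximizing sequence from dispersing and plays the role that exact parabolic rescaling plays on paraboloids. The idea is to decompose frequency space into dyadic annuli $|\xi|\sim2^k$ and observe that, after an appropriate rescaling, the portion of $\mathbb H^d$ over such an annulus is a bounded perturbation of a compact piece of a paraboloid (when $2^k\lesssim1$) or of a compact piece of the cone (when $2^k\gg1$); on each piece one then invokes the available sharp bilinear restriction estimates for elliptic hypersurfaces, respectively for cones, keeping careful track of how the Lorentz-invariant measure \eqref{eq:LorentzInv} and the various constants transform under the rescaling. Summing the bilinear contributions over a Whitney decomposition of pairs of angularly separated caps then upgrades these to a linear estimate, schematically of the form
\begin{equation*}
\|\mathcal Ef\|_{L^p(\R^{1+d})}\lesssim\|f\|_{L^2(\mathbb H^d)}^{1-\theta}\Big(\sup_{\kappa}|\kappa|^{-\beta}\|f\,\mathbf 1_\kappa\|_{L^2(\mathbb H^d)}\Big)^{\theta},\qquad\theta,\beta>0,
\end{equation*}
with $\kappa$ ranging over caps of all scales: a nonvanishing extension norm forces the mass of $f$ to localize to a single cap at a single scale. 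I expect this bilinear-to-linear passage, carried out uniformly across the paraboloidal and conic regimes of $\mathbb H^d$, to be the main obstacle; it is also the point at which $d=1$ --- where $\mathbb H^1$ is a curve rather than a hypersurface --- requires a separate treatment, as in \cite{COSS19a}.

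Feeding the refined estimate into the by-now-standard iteration (as for the Schr\"odinger and wave equations) should yield a linear profile decomposition $f_n=\sum_{j=1}^{N}\pi_n^j\phi^j+r_n^N$, with $\pi_n^j$ sequences of symmetries, enjoying the Pythagorean identity $\|f_n\|_{L^2}^2=\sum_{j=1}^N\|\phi^j\|_{L^2}^2+\|r_n^N\|_{L^2}^2+o_n(1)$, the remainder smallness $\limsup_n\|\mathcal Er_n^N\|_p\to0$ as $N\to\infty$, and the asymptotic $L^p$-decoupling $\|\mathcal Ef_n\|_p^p=\sum_{j=1}^N\|\mathcal E\phi^j\|_p^p+\|\mathcal Er_n^N\|_p^p+o_n(1)$; the decoupling for symmetry-separated profiles is where the subcriticality from the first paragraph enters, forcing the would-be paraboloidal and conic profiles to carry zero extension norm, so that only honest profiles in $L^2(\mathbb H^d)$ survive. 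Letting $n\to\infty$ and then $N\to\infty$ gives
\begin{equation*}
{\bf H}_{d,p}^p=\sum_j\|\mathcal E\phi^j\|_p^p\le{\bf H}_{d,p}^p\sum_j\|\phi^j\|_{L^2}^p\le{\bf H}_{d,p}^p\Big(\sum_j\|\phi^j\|_{L^2}^2\Big)^{p/2}\le{\bf H}_{d,p}^p,
\end{equation*}
where the middle inequality uses the elementary bound $\sum_j a_j^{p/2}\le(\sum_j a_j)^{p/2}$ for $a_j\ge0$, valid since $p>2$, together with the Pythagorean identity. All inequalities are therefore equalities, which is possible only if there is exactly one nonzero profile $\phi^1$, with $\|\phi^1\|_{L^2}=1$, the remainder vanishes, and $\|\mathcal E\phi^1\|_p^p={\bf H}_{d,p}^p$. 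Hence $\phi^1$ maximizes \eqref{eq:StriHyp} and $(\pi_n^1)^{-1}f_n\to\phi^1$ in $L^2(\mathbb H^d)$ along a subsequence, which is the assertion of Theorem~\ref{thm:Hyp2}.
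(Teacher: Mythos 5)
Your overall strategy --- a cap-refined Strichartz estimate coming from bilinear restriction theory, Lorentz boosts and modulations as the noncompact symmetries, and strict subcriticality inside the range \eqref{eq:ExpRangeHyp} to neutralize paraboloidal concentration and conic escape to infinity --- coincides with that of \cite{COSS19a, COSSS18}, and your scaling computations for why both endpoints must be excluded are accurate. Where you genuinely diverge is in the concluding mechanism. The cited proofs do not construct a full linear profile decomposition: the refined estimate (for instance \eqref{eq:sharpenedSt} on $\mathbb H^1$, obtained by elementary Hausdorff--Young and Hardy--Littlewood--Sobolev arguments for $d\in\{1,2\}$, and via bilinear restriction theory as in \cite[Theorem 5.1]{COSSS18} only when $d\geq 3$) is used directly to produce a \emph{distinguished cap} carrying a fixed proportion of the $L^2$ mass; a Lorentz boost normalizes this cap to $\mathcal C_0$, which rules out the loss of mass at infinity identified in \cite{Q15}, and a comparatively soft weak-convergence/missing-mass argument then upgrades weak to strong $L^2$ convergence modulo symmetries. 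Your route repackages the same information into a profile decomposition with a Pythagorean expansion and $L^p$ decoupling, concluding by strict superadditivity since $p>2$; this is viable and would yield finer information about arbitrary bounded sequences, but the step you call the ``by-now-standard iteration'' is not off-the-shelf here: since $\mathbb H^d$ has no exact scaling, the decomposition must accommodate degenerate (rescaled paraboloidal and conic) profiles that are not generated by the symmetry group, and proving that these carry asymptotically vanishing extension norm in the open range is essentially the same work that the distinguished-cap argument accomplishes with less machinery. One small caution on an aside of yours: nonexistence of maximizers at the endpoints is only known in the algebraic cases treated in \cite{Q15} and Theorem~\ref{thm:Hyp1}, though this plays no role in the statement being proved.
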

The proof of Theorem \ref{thm:Hyp1} relies on two ingredients.
Firstly, by Lorentz invariance it suffices to study the convolution measure $(\sigma\ast\sigma\ast\sigma)(\tau,\xi)$ along the axis $\xi=0$, and we 
verify that $\tau\mapsto(\sigma\ast\sigma\ast\sigma)(\tau,0)$ defines a continuous function on the half-line $\tau>3$, which extends continuously to the boundary of its support, so that
\begin{equation}\label{eq:StrictMax}
\sup_{\tau>3} (\sigma\ast\sigma\ast\sigma)(\tau,0)=(\sigma\ast\sigma\ast\sigma)(3,0)=\frac{2\pi}{\sqrt{3}},
\end{equation}
and that this global maximum is strict.
By an application of the Cauchy--Schwarz inequality, which is similar but simpler than the one for the sphere discussed in Section  \ref{s:circle}, it follows that ${\bf H}_{1,6}\leq 3^{-1/12}(2\pi)^{1/2}$.
For the reverse inequality,
one checks that $f_n=\exp(-n\langle\cdot\rangle)$ forms a maximizing sequence for \eqref{eq:StriHyp}, in the sense that 
\[\lim_{n\to\infty} \frac{\|f_n\sigma\ast f_n\sigma\ast f_n\sigma\|_{L^2(\R^2)}^2}{\|f_n\|_{L^2(\mathbb H^1)}^6}=\frac{2\pi}{\sqrt{3}}.\]
This crucially relies on the fact that the strict global maximum of $\sigma\ast\sigma\ast\sigma$ occurs at the {\it boundary} of the support of the convolution; recall \eqref{eq:StrictMax}. 
In particular, maximizers for \eqref{eq:StriHyp} do not exist if $(d,p)=(1,6)$.

For higher-order convolutions $\sigma^{\ast k}$, $k\geq 4$, the global maximum occurs in the interior of the support, which already hints towards Theorem \ref{thm:Hyp2}.
The proof of this theorem is more involved, and crucially relies on a {\it refined Strichartz estimate}.
If $d\in\{1,2\}$, corresponding to the range \eqref{eq:ExpRangeHyp} whose endpoints are even integers, then this can be obtained via elementary methods, such as the Hausdorff--Young and Hardy--Littlewood--Sobolev inequalities.
For instance, on $\mathbb H^1$ it is proved in \cite[Cor.\@ 13]{COSS19a} that, for each $6\leq p<\infty$, there exists $C_p<\infty$ such that
\begin{equation}\label{eq:sharpenedSt}
\|\mathcal E f\|_{L^p(\R^2)} \leq C_p \sup_{k\in\mathbb Z} \|f_k\|_{L^2(\mathbb H^1)}^{1/3} \|f\|_{L^2(\mathbb H^1)}^{2/3}.
\end{equation}
The decomposition $f=\sum_{k\in\mathbb Z} f_k$ is such that $f_k=f 1_{\mathcal C_k}$, where the family of {\it hyperbolic caps} $\{\mathcal C_k\}_{k\in\mathbb Z}\subset\mathbb H^1$ is given by
\[\mathcal C_k :=\{(\tau,\xi)\in\mathbb H^1: \sinh(k-\tfrac12)\leq \xi\leq \sinh(k+\tfrac12)\}.\]
Inequality \eqref{eq:sharpenedSt} allows us to start gaining  control over arbitrary maximizing sequences. 
In particular, it can be used to show the existence of a {\it distinguished cap} which contains a positive proportion of the total mass; possibly after a Lorentz boost, the distinguished cap can be assumed to coincide with $\mathcal C_0$. This rules out the possibility of mass concentration at infinity, which had been previously identified in \cite{Q15} as the main obstruction to the precompactness of maximizing sequences modulo symmetries.
If $d\geq 3$, then the refined Strichartz estimate follows from bilinear restriction theory; see \cite[Theorem 5.1]{COSSS18}. 
We omit the technical details, and refer the interested reader to \cite[\S 5]{COSSS18}.

It would be interesting to understand whether the two-sheeted hyperboloid shares similar features to the ones described for the two-sheeted cone in Section \ref{s:cones}.

\section*{Acknowledgements}
GN and DOS were supported by the EPSRC New Investigator Award “Sharp Fourier Restriction Theory”, grant no. EP/T001364/1. 
DOS and CT acknowledge partial support from the Deutsche Forschungsgemeinschaft under Germany’s Excellence Strategy – EXC-2047/1 – 390685813, and (CT) from CRC 1060.
This work was initiated during a pleasant visit of CT to Instituto Superior Técnico, Universidade de Lisboa, whose hospitality is greatly appreciated.

\end{document}